\numberwithin{equation}{section}
\newtheorem{theorem}{Theorem}
\newtheorem{lemma}[theorem]{Lemma}
\newtheorem{proposition}[theorem]{Proposition}
\theoremstyle{remark}
\newtheorem*{remark}{Remark}
\newtheorem{assumption}{Assumption}
\newtheorem{definition}{Definition}
\def\clap#1{\hbox to 0pt{\hss#1\hss}}
\newcommand{\Ex}{\mathbb{E}}
\begin{document}

\bibliographystyle{plainnat}

\begin{frontmatter}
  \title{A nonparametric two-sample hypothesis testing problem for random graphs}
  \runtitle{A nonparametric two-sample hypothesis testing problem for random graphs}

\begin{aug}
\author{\fnms{Minh} \snm{Tang}\thanksref{a}\ead[label=e1]{mtang10@jhu.edu}},
\author{\fnms{Avanti} \snm{Athreya}\thanksref{a}\ead[label=e2]{dathrey1@jhu.edu}},
\author{\fnms{Daniel L.} \snm{Sussman}\thanksref{b}%
\ead[label=e3]{daniellsussman@fas.harvard.edu}}, \\
\author{\fnms{Vince} \snm{Lyzinski}\thanksref{c}%
\ead[label=e4]{vlyzins1@jhu.edu}}
\and
\author{\fnms{Carey E.} \snm{Priebe}\thanksref{a}%
\ead[label=e5]{cep@jhu.edu}}

\address[a]{Department of Applied Mathematics and Statistics, Johns
  Hopkins University \\
\printead{e1,e2,e5}}

\address[b]{Department of Statistics, Harvard University \\ \printead{e3}}

\address[c]{Human Language Technology Center of Excellence, Johns
  Hopkins University \\
\printead{e4}}

\runauthor{M.~Tang et al.}

\affiliation{Johns Hopkins University and Harvard University}

\end{aug}

\begin{abstract}
  We consider the problem of testing whether two independent finite-dimensional
  random dot product graphs have generating latent positions that are
  drawn from the same distribution, or distributions
  that are related via scaling or projection. We propose a test
  statistic that is a kernel-based function of the estimated latent
  positions obtained from the adjacency spectral embedding for each graph. 
  We show that our test statistic using the estimated latent positions
  converges to the test statistic obtained using the true but unknown
  latent positions and hence that our proposed test procedure is
  consistent across a broad range of alternatives. Our proof of 
  consistency hinges upon a novel concentration inequality
  for the suprema of an empirical process in the estimated latent
  positions setting. 
\end{abstract}

\begin{keyword}
\kwd{nonparametric graph
inference} 
\kwd{random dot product graph} 
\kwd{empirical process}
\end{keyword}

\end{frontmatter}

\section{Introduction}
\label{sec:introduction}
The nonparametric two-sample hypothesis testing problem
involves
\begin{gather*}
  \{X_i\}_{i=1}^{n} \overset{\mathrm{i.i.d}}{\sim} F, \quad    
  \{Y_k\}_{k=1}^{m} \overset{\mathrm{i.i.d}}{\sim} G; \quad
  \mathbb{H}_0 \colon F = G \quad \text{against} \quad \mathbb{H}_{A} \colon
F \not = G 
\end{gather*}
where $F$ and $G$ are two distributions taking values in
$\mathbb{R}^{d}$. %  along with the test
% \begin{equation*}
%   \mathbb{H}_0 \colon F = G \quad \text{against} \mathbb{H}_{A} \colon
%   F \not = G. 
% \end{equation*}
This is a classical problem and there exist a large number of test
statistics $T(\{X_i\}_{i=1}^{n}, \{Y_k\}_{k=1}^{m})$ that are
consistent for any arbitrary distributions $F$ and $G$.

In this paper, we consider a related problem that arises naturally in
the context of inference on random graphs. That is, suppose that the
$\{X_i\}_{i=1}^{n}$ and $\{Y_k\}_{k=1}^{m}$ are {\em unobserved}, and we
observe instead adjacency matrices $\mathbf{A}$ and $\mathbf{B}$
corresponding to random dot product graphs on $n$ and $m$ vertices
with latent positions $\{X_i\}_{i=1}^{n}$ and
$\{Y_k\}_{k=1}^{m}$, respectively. Denoting by $\{\hat{X}_i\}_{i=1}^{n}$ and
$\{\hat{Y}_k\}_{k=1}^{m}$ the adjacency spectral embedding of $\mathbf{A}$ and
$\mathbf{B}$ (see Definition~\ref{def:ase}), we construct test statistics
$T(\{\hat{X}\}_{i=1}^{n}, \{\hat{Y}_k\}_{k=1}^{m})$ for testing $F = G$
(and related hypotheses) that are consistent for a broad collection of
distributions.

In other words, we construct a test for the hypothesis that two random
dot product graphs have the same underlying distribution of latent
positions, or underlying distributions that are related via scaling or
projection. This problem may be viewed as the nonparametric analogue
of the semiparametric inference problem considered in
\citet{tang14:_two}, in which a valid test is given for the hypothesis
that two random dot product graphs have the same fixed latent
positions.  This formulation also includes, as a special case, a
test for the parametric problem of whether two graphs come from the
same stochastic blockmodel (where the block probability matrix is
positive semidefinite) or from the same degree-corrected stochastic
blockmodel.  Determining whether two random graphs are ``similar" in
an appropriate sense is a problem that arises naturally in
neuroscience, network analysis, and machine learning. 
Examples include the comparison of graphs in a time series,
such as email correspondence among a group over time, the comparison
of neuroimaging scans of patients under varying conditions, or the comparison of user
behavior on different social media platforms. 

While it might seem like there are only minor differences between the
nonparametric setting of the current paper and the semiparametric
setting of \cite{tang14:_two}, the implications with regard to
inference are quite significant. Indeed, in the semiparametric
setting, the graphs are on the same vertex set with known vertex
alignment; in the nonparametric setting we consider herein, the graphs need not be on the
same vertex set or even have the same number of vertices. This
difference implies that the nonparametric testing procedure of the
current paper is applicable in more general and diverse settings; on
the other hand, when the vertex correspondences exist and are known,
the semiparametric testing procedure has more power.  Secondly, in the
semiparametric setting, the dimensionality of the hypotheses (the
number of parameters) increases with $n$, the number of vertices, while
in the current setup the hypotheses are fixed for all $n$. As
such, the notion of a consistent test procedure in \citep{tang14:_two}
is considerably more subtle. Finally, while rejection regions can be
theoretically derived for the test procedures in both the
nonparametric setting and the semiparametric setting, in practice they
are usually estimated via some bootstrap resampling procedure. For the
nonparametric setting wherein the null hypothesis is fixed as the size
of the graphs changes, bootstrap resampling is straightforward. A
feasible bootstrapping procedure in the semiparametric setting 
is much more involved.

% We use the adjacency spectral
% embedding, which is a spectral decomposition of the adjacency matrix,
% for each graph to construct a kernel-based testing procedure for the
% equality of the two distributions of latent positions. In other words,
% let $F$ and $G$ be two distributions on $\mathbb{R}^{d}$ and that
% \begin{gather*}
%   \{X_i\}_{i=1}^{n} \overset{\mathrm{i.i.d}}{\sim} F;
%   \{Y_k\}_{k=1}^{m} \overset{\mathrm{i.i.d}}{\sim} G.
% \end{gather*}
% Now suppose that observes $\mathbf{A}$ and $\mathbf{B}$, two random dot product graphs 
% on $n$ and $m$ vertices with latent positions $\{X_i\}_{i=1}^{n}$ and
% $\{Y_k\}_{k=1}^{m}$, and 

The test statistic we construct is an empirical estimate of the
maximum mean discrepancy of \citet{gretton12:_kernel_two_sampl_test}. 
The maximum mean discrepancy in this context is
equivalent to an $L_2$-distance between kernel density estimates of
distributions of the latent positions (see
e.g. \citet{anderson94:_two}). The test statistic can also be framed
as a weighted $L_2$-distance between empirical estimates of
characteristic functions similar to those of
\citet{hall13,fernandez08,baringhaus88}. % We rely on prior work by
% \cite{borgwardt06:_integ,gretton12:_kernel_two_sampl_test}, and CITE
% OTHERS; 
Indeed, techniques for the estimation and comparison of densities or
characteristic functions given i.i.d data are well-known.  We strongly
emphasize, however, that in our case, the observed data are {\em not
  the true latent positions}---which are themselves random and drawn
from the unknown distributions whose equality we wish to test---but
rather the adjacency matrices of the resulting random dot product
graphs.  Thus one of our main technical contributions is the
demonstration that functions of the true latent positions are
well-approximated by functions of the adjacency spectral embeddings.

The results of this paper are mainly for dense graphs, i.e., those graphs for
which the average degree scale linearly with the number of 
vertices. Analogous results for non-dense graphs, e.g., those for which the
average degree of the vertices grows at order $\Omega(\log^{4} n)$ --
$n$ being the number of vertices in the graph -- are more subtle and we
touch upon this briefly in Section~\ref{sec:extensions}. 

% We remark that our perspective on kernel-based testing for graphs
% differs significantly from the kernel statistics  \citet{borgwardt06:_integ}. In their work,
% the statistics are computed on the graphs, using, for example,
% the random walk graph kernels
% \citep{vishwanathan10:_graph}. The statistics are used mainly as a
% quantitative measure of the difference between any two graphs
% ; no assumption is made regarding how
% these graphs are generated. As such, while these statistics are useful
% for machine learning with graphs, they are not 

 % Unfortunately, there
% is no known computationally tractable kernel that is also
% characteristic for general distributions on graphs. Indeed, the
% existence of such a kernel implies a polynomial time algorithm for
% subgraph isomorphism, a NP-complete problem \citep{gartner03}. However,
% in this paper we restrict ourselves to the family of distributions of
% latent positions for random dot product graphs, for which the
% computation of kernel statistics on the adjacency spectral embedding
% yields a characteristic kernel.

We organize the paper as follows.  In
Section~\ref{sec:setting_background}, we recall the definition of a
random dot product graph and the adjacency spectral embedding; we
review the relevant background in kernel-based hypothesis testing; and
we formulate a nonparametric two-sample test of equality of
distributions for the latent positions of a pair of random dot product
graphs. In Section~\ref{sec:main-results}, we propose a test procedure
for the two-sample test of equality up to orthogonal transformation in
which the test statistics are a function of the adjacency spectral
embedding. We note that our hypotheses of
equality are purely a function of the non-identifiability of the
random dot product graph model. This non-identifiability also restricts our
consideration of kernel-based hypothesis testing to radial kernels.  
We establish the consistency of our test procedure by
deriving a novel concentration inequality for the suprema of an
empirical process using the estimated latent positions.  In
Section~\ref{sec:experimental-results}, we illustrate our test
procedure with experimental results on simulated and real
data. Section~\ref{sec:extensions} extends the test procedure in
Section~\ref{sec:main-results} to consider looser notions of equality
between the two distributions as well as sparsity in the underlying
graphs model. 
%All proofs are presented in the appendix of the paper.
\section{Background and Setting}
\label{sec:setting_background}
We first recall the notion of a random dot product graph
\citep{young2007random}.
\begin{definition}
  \label{def:1}
  Let $\Omega$ be a subset of $\mathbb{R}^{d}$ such that, for all
  $\omega_1, \omega_2 \in \Omega$, the inner product $\langle
  \omega_1, \omega_2 \rangle = \omega_1^{\top}
  \omega_2$ is contained in the interval $[0,1]$. For any given $n
  \geq 1$, let $\mathbf{X} = [X_1, X_2 \dots, X_n]^{\top}$ be a $n \times
  d$ matrix whose rows are arbitrary elements of $\Omega$. Given ${\bf
    X}$, suppose $\mathbf{A}$ is a random $n \times n$ adjacency
  matrix with probability
  \begin{equation*} \mathbb{P}[\mathbf{A} | \{X_i\}_{i=1}^{n}] =
    \prod_{i < j} (X_i^{\top} X_j)^{\mathbf{A}_{ij}} (1 - X_i^{\top}
    X_j)^{1 - \mathbf{A}_{ij}}. 
  \end{equation*} 
  $\mathbf{A}$ is then said to be the adjacency matrix of a {\em random dot product graph}
  (RDPG) with {\em latent positions} $\mathbf{X}$ and we denote this by $\mathbf{A} \sim
  \mathrm{RDPG}(\mathbf{X})$. Now suppose that the rows of
  $\mathbf{X}$ are not fixed, but are instead independent random variables sampled
  according to some distribution $F$ on $\Omega$. Then $\mathbf{A}$ is
  said to be the adjacency matrix of a {\em random dot product graph}
  with {\em latent positions} $\mathbf{X}$ {\em sampled according to}
  $F$ and we denote this by writing $(\mathbf{X}, \mathbf{A}) \sim
  \mathrm{RDPG}(F)$. We shall also write $\mathbf{A} \sim
  \mathrm{RDPG}(F)$ when the dependency of $\mathbf{A}$ on $\mathbf{X}$
  is integrated out.
\end{definition}

As an example of random dot product graphs, one could take $\Omega$ to
be the unit simplex in $\mathbb{R}^{d}$ and let $F$ be a mixture of
Dirichlet distributions. Given a matrix of latent positions ${\bf X}$, the random dot product
model generates a symmetric adjacency matrix ${\bf A}$ whose edges
$\{\mathbf{A}_{ij}\}_{i < j}$ are independent Bernoulli random
variables with parameters $\{\mathbf{P}_{ij}\}_{i < j}$, where
$\mathbf{P} = {\bf XX}^T$. Random dot product graphs are a specific
example of {\em latent position graphs} \citep{Hoff2002}, in which
each vertex is associated with a latent position and, conditioned on
the latent positions, the presence or absence of the edges in the
graph are independent. The edge presence probability between two
vertices is given by a symmetric link function of the latent positions
of the associated vertices. A random dot product graph with i.i.d latent positions on $n$ vertices
is also, when viewed as an induced subgraph of an infinite graph, an
example of an {\em exchangeable random graph}
\citep{diaconis08:_graph_limit_exchan_random_graph}. Random dot product
graphs are related to stochastic block model graphs
\citep{Holland1983} and degree-corrected stochastic block model graphs
\citep{karrer2011stochastic}, as well as mixed membership block
models \citep{Airoldi2008}; for example, a stochastic block model graph
with $K$ blocks and a positive semidefinite block probability matrix
$\mathbf{B}$ corresponds to a random dot product graph whose latent positions are
drawn from a mixture of $K$ point masses. 

\begin{remark}
  We note that non-identifiability is a property of nearly all exchangeable random graph models,
  and specifically, it is an intrinsic property of random  dot product graphs.
  Indeed, for any matrix $\mathbf{X}$ and
  any orthogonal matrix $\mathbf{W}$, the inner product between any
  rows $i,j$ of $\mathbf{X}$ is identical to that between the rows
  $i,j$ of $\mathbf{XW}$. Hence, for
  any probability distribution $F$ on $\Omega$ and unitary operator
  $U$, the adjacency matrices $\mathbf{A}
  \sim \mathrm{RDPG}(F)$ and $\mathbf{B} \sim \mathrm{RDPG}(F \circ
  U)$ are identically distributed (here, for a random variable $X
  \sim F$, we write $F \circ U$ to denote the distribution of $Y =
  U^{\top} X$).
\end{remark}

We now define the notion of adjacency spectral embedding; this is the
key intermediate step in our subsequent two-sample hypothesis testing
procedures. 
\begin{definition} \label{def:ase} Let $\mathbf{A}$ be a $n \times n$
  adjacency matrix. Suppose the eigendecomposition of
  $|\mathbf{A}| = (\mathbf{A}^{\top} \mathbf{A})^{1/2}$ is given
  by
  $$|\mathbf{A}| = \sum_{i=1}^{n} \lambda_i \bm{u}_i \bm{u}_i^{\top}$$
  with $\lambda_1 \geq \lambda_2 \geq \dots \geq \lambda_n$ being the
  eigenvalues of $|\mathbf{A}|$ and $\bm{u}_1, \dots, \bm{u}_n$ the
  corresponding eigenvectors. Given a positive integer $d \leq n$,
  denote by
  $\mathbf{S}_{\mathbf{A}} = \mathrm{diag}(\lambda_1, \dots,
  \lambda_d)$
  the diagonal matrix whose diagonal entries are
  $\lambda_1, \dots, \lambda_d$ and denote by
  $\mathbf{U}_{\mathbf{A}}$ the $n \times d$ matrix whose columns are
  the corresponding eigenvectors $\bm{u}_1, \dots, \bm{u}_d$.  The
  {\em adjacency spectral embedding} $\mathbf{A}$ into
  $\mathbb{R}^{d}$ is then the $n \times d$ matrix
  $\hat{{\bf X}} = {\bf U}_{\mathbf{A}} {\bf S}_{\mathbf{A}}^{1/2}$.
\end{definition}
\begin{remark}
  The intuition behind the notion of adjacency spectral embedding is
  as follows. We note that if
  $(\mathbf{A}, \mathbf{X}) \sim \mathrm{RDPG}(F)$, then the upper
  triangular entries of $\mathbf{A} - \mathbf{X} \mathbf{X}^{\top}$
  are independent random variables. Let $\|\cdot \|$ denote the
  spectral norm of a matrix. Then one can show that
  $\|\mathbf{A} - \mathbf{X} \mathbf{X}^{\top} \| = O(\|\mathbf{X}\|)
  = o(\|\mathbf{X} \mathbf{X}^{\top}\|)$
  with high probability \citep{oliveira2009concentration}.
  That is to say, $\mathbf{A}$ can be viewed as a ``small''
  perturbation of $\mathbf{X} \mathbf{X}^{\top}$.  If we now assume
  that $\mathbf{X}$ is of rank $d$ for some $d$ -- an assumption that
  is justified in the random dot product graphs model -- then the
  Davis-Kahan theorem \citep{davis70} implies that the subspace
  spanned by the top $d$ eigenvectors of $\mathbf{X}
  \mathbf{X}^{\top}$ is well-approximated by the subspace spanned by
  the top $d$ eigenvectors of $\mathbf{A}$. 
  In particular, the eigendecomposition of $\mathbf{X}
  \mathbf{X}^{\top}$ recovers
  the matrix $\mathbf{X}$ up to an orthogonal transformation; hence the adjacency
  spectral embedding of $\mathbf{A}$ is expected to yield a consistent
  estimate of $\mathbf{X}$ up to an orthogonal transformation (see 
  Lemma~\ref{lem:2}). 
\end{remark}

\subsection{Two-sample hypothesis testing }
\label{sec:hypothesis-tests}
In this paper we propose a nonparametric version of the two-sample
hypothesis test examined in \citet{tang14:_two}.  To wit,
\citet{tang14:_two} presents a two-sample random dot product graph
hypothesis test as follows.  Let ${\bf X}_n$ and ${\bf Y}_n$ be $n
\times d$ matrices of fixed (non-random) latent positions, and
$\mathcal{O}(d)$ the collection of orthogonal matrices in
$\mathbb{R}^{d \times d}$. Suppose ${\bf A} \sim
\mathrm{RDPG}(\mathbf{X}_n)$ and $\mathbf{B} \sim
\mathrm{RDPG}(\mathbf{Y}_n)$ are the adjacency matrices of random dot
product graphs with latent positions ${\bf X}_n$ and ${\bf Y}_n$,
respectively.  Consider the sequence of hypothesis tests
%\item {\em (Equality, up to an orthogonal transformation)}
\begin{align*} H^n_0 \colon {\bf X}_n \upVdash {\bf Y}_n \quad 
\text{against} \quad H^n_A \colon {\bf X}_n \nupVdash {\bf Y}_n
\end{align*}
where $\upVdash$ denotes that there exists an ${\bf W} \in
\mathcal{O}(d)$ such that ${\bf X}_n={\bf Y}_n{\bf W}$.  In
\citet{tang14:_two}, it is shown that rejecting for large values of the
test statistic $T_n$ defined by
\begin{equation*}
T_n=\min\limits_{{\bf W} \in \mathcal{O}(d)}
\|\hat{{\bf X}}_n{\bf W}-\hat{{\bf Y}}_n\|_F,
\end{equation*}
yields a consistent test procedure for any sequence of latent positions
$\{\mathbf{X}_n\}$, $\{\mathbf{Y}_n\}$ for which $\min_{\mathbf{W} \in
  \mathcal{O}(d)} \|\mathbf{X}_n - \mathbf{Y}_n \mathbf{W}\|$ diverges
as $n \rightarrow \infty$.

Our main point of departure in this work is the assumption
that, for each $n$, the rows of the latent positions $\mathbf{X}_n$ and
$\mathbf{Y}_n$ are independent samples from some fixed distributions $F$ and $G$,
respectively. The corresponding tests are therefore tests of equality between $F$ and
$G$. More formally, we consider the following two-sample nonparametric testing
problems for random dot product graphs. Let $F$ and $G$ be probability
distributions on $\Omega \subset \mathbb{R}^{d}$ for some $d$. 
Given $\mathbf{A} \sim \mathrm{RDPG}(F)$ and $\mathbf{B} \sim \mathrm{RDPG}(G)$, we
consider the tests:
\begin{enumerate}
\item{\em (Equality, up to orthogonal transformation)}
  \begin{align*}
  H_{0} \colon F \upVdash G
  \quad \text{against} \quad H_{A} \colon F  \nupVdash G,
  \end{align*}
  where $F \upVdash G$ denotes that there exists a unitary operator
  $U$ on $\mathbb{R}^{d}$ such that $F = G \circ U$ and $F \nupVdash
  G$ denotes that $F \not = G \circ U$ for any unitary operator $U$ on
  $\mathbb{R}^{d}$.
\item {\em (Equality, up to scaling)}
  \begin{align*}
H_{0} \colon F \upVdash G \circ c \quad \text{for
    some $c > 0$} \quad  
  \text{against} \quad H_{A} \colon F  \nupVdash G \circ c \quad
  \text{for any  $c > 0$},
  \end{align*}
  where $Y \sim F \circ c$ if $cY \sim F$.
\item {\em (Equality, up to projection)}   \begin{align*}
\quad H_{0} \colon F \circ \pi^{-1} \upVdash G
\circ \pi^{-1} \quad 
  \text{against} \quad H_{A} \colon F \circ \pi^{-1} \nupVdash G \circ \pi^{-1} ,
  \end{align*}
  where $\pi$ is the
  projection $x \mapsto x/\|x\|$; hence $Y \sim F \circ
  \pi^{-1}$ if $\pi^{-1}(Y) \sim F$. 
\end{enumerate}
We note that the above null hypothesis are nested; $F \upVdash G$
implies $F \upVdash G \circ c $ for $c = 1$ while $F \upVdash G
\circ c$ for some $c > 0$ implies $F \circ \pi^{-1} \upVdash G \circ
\pi^{-1}$.

\subsection{Maximum mean discrepancy}
\label{sec:kernel-based-two}
We now introduce the notion of the maximum mean discrepancy between
two distribution 
\citet{gretton12:_kernel_two_sampl_test}. 
%We
%then provide an adaption of this kernel two-sample test to the latent
%position graph setting via adjacency spectral embedding
%\cite{fishkind2012consistent,sussman12} in
%\S~\ref{sec:main-results}. 
The maximum mean discrepancy is a distance measure for probability
distributions and hence can be used to construct a non-parametric two-sample
hypothesis testing procedure (see
Theorem~\ref{thm:mmd_unbiased_limiting} below). The maximum mean
discrepancy is just one of several examples of kernel-based testing
procedures; see \citet{harchaoui13:_kernel} for a recent survey of the
literature and for a more detailed discussion.  

Let $\Omega$ be a
compact metric space and
$\kappa \, \colon\, \Omega \times \Omega \mapsto \mathbb{R}$ a
continuous, symmetric, and positive definite kernel on
$\Omega$. Denote by $\mathcal{H}$ the reproducing kernel Hilbert space
associated with $\kappa$. Now let $F$ be a probability distribution on
$\Omega$. Under mild conditions on $\kappa$, the map $\mu[F]$ defined
by
\begin{equation*}
  \mu[F] :=  \int_{\Omega} \kappa(\omega, \cdot) \, \mathrm{d} F(\omega)
\end{equation*}
belongs to $\mathcal{H}$.
Now, for given probability distributions $F$ and $G$ on $\Omega$, the
{\em maximum mean discrepancy} between $F$ and $G$ with respect to
$\mathcal{H}$ is the measure
\begin{equation*}
  \mathrm{MMD}(F, G; \mathcal{H}) := \|\mu[F] - \mu[G]
    \|_{\mathcal{H}}. 
\end{equation*}
We summarize some important properties of the maximum mean
discrepancy from \citet{gretton12:_kernel_two_sampl_test}. In
particular, if $\kappa$ is chosen so that $\mu$ is an
injective map, then $\|\mu[F] -
\mu[G]\|_{\mathcal{H}}$ yields a consistent test for testing the
hypothesis $\mathbb{H}_{0} \colon F = G$ against the hypothesis
$\mathbb{H}_{A} \colon F \not = G$ for any two arbitrary but fixed distributions $F$ and
$G$ on $\Omega$. 
\begin{theorem}
  \label{thm:mmd_unbiased_limiting}
  Let $\kappa
  \, \colon \,
  \mathcal{X} \times \mathcal{X} \mapsto \mathbb{R}$ be a positive definite
  kernel and denote by $\mathcal{H}$ the reproducing kernel Hilbert space
  associated with $\kappa$. Let $F$ and $G$ be probability distributions on $\Omega$; $X$
  and $X'$ independent random variables with distribution $F$, $Y$ and
  $Y'$ independent random variables with distribution $G$, and $X$ is
  independent of $Y$. Then
  \begin{equation}
    \label{eq:5}
    \begin{split}
    \| \mu[F] - \mu[G] \|^{2}_{\mathcal{H}} &= \sup_{h \in \mathcal{H}
    \colon \|h\|_{\mathcal{H}} \leq 1} |\mathbb{E}_{F}[h] -
    \mathbb{E}_{G}[h]|^{2} \\ &= \mathbb{E}[\kappa(X,X')] - 2 \mathbb{E}[\kappa(X,Y)]
    + \mathbb{E}[\kappa(Y,Y')].
    \end{split}
  \end{equation}
  % Denote by $\Phi \colon \Omega \mapsto \mathcal{H}$ the canonical feature map $\Phi(X) = \kappa( \cdot, X)$ of $\kappa$. 
  Given $\mathbf{X} = \{X_i\}_{i=1}^{n}$ and $\mathbf{Y}
  = \{Y_k\}_{k=1}^{m}$ with $\{X_i\}
  \overset{\mathrm{i.i.d}}{\sim} F$ and $\{Y_i\}
  \overset{\mathrm{i.i.d}}{\sim} G$, 
  the quantity $U_{n,m}({\bf X}, {\bf Y})$ defined by
  \begin{equation}
    \label{eq:10}
    \begin{split}
     U_{n,m}({\bf X}, {\bf Y})
% \|\mu[\mathbf{X}] - \mu[\mathbf{Y}]\|_{\mathcal{H}}^{2}
 &= \frac{1}{n(n-1)} 
     \sum_{j\not = i} \kappa(X_i,X_j)
    - \frac{2}{mn} \sum_{i=1}^{n} \sum_{k=1}^{m} \kappa(X_i, Y_k) \\ &+
    \frac{1}{m(m-1)} \sum_{l \not = k} \kappa(Y_k, Y_l)
    \end{split}
  \end{equation}
  is an {unbiased consistent estimate} of $\|\mu[F] -
  \mu[G]\|_{\mathcal{H}}^{2}$. Denote by $\tilde{\kappa}$ the kernel
  \begin{equation*}
    \begin{split}
    \tilde{\kappa}(x,y) &= \kappa(x,y) - \mathbb{E}_{z}
    \kappa(x, z) - \mathbb{E}_{z'} \kappa(z', y) +
    \mathbb{E}_{z,z'} \kappa(z,z')
     \end{split}
  \end{equation*}
  where the expectation is taken with respect to $z, z' \sim F$. 
 %  We refer to $\tilde{\kappa}$ as the centering of $\kappa$.
 % Suppose that $\tfrac{m}{m+n} \rightarrow
 %  \rho \in (0,1)$ as $m, n \rightarrow \infty$.  
Suppose that $\tfrac{m}{m+n} \rightarrow
  \rho \in (0,1)$ as $m, n \rightarrow \infty$. Then under the null
  hypothesis of $F = G$, 
  \begin{equation}
    \label{eq:mmd-X}
    (m+n) U_{n,m}(\mathbf{X}, \mathbf{Y})
    \overset{d}{\longrightarrow} \frac{1}{\rho(1 - \rho)} \sum_{l=1}^{\infty}
    \lambda_{l} (\chi^{2}_{1l} - 1)
  \end{equation}
  where $\{\chi^{2}_{1l}\}_{l=1}^\infty$ is a sequence of independent $\chi^{2}$
  random variables with one degree of freedom, and $\{\lambda_{l}\}$
  are the eigenvalues of the integral operator $\mathcal{I}_{F,
    \tilde{\kappa}}:\mathcal{H}\mapsto \mathcal{H}$ defined as
  \begin{equation*}
%    \label{eq:integral_op}
    I_{F, \tilde{\kappa}}(\phi)(x)=\int_{\Omega} \phi(y)\tilde{\kappa}(x,y) dF(y).
    \end{equation*}
    Finally, if $\kappa$ is a universal or
    characteristic kernel \citep{sriperumbudur11:_univer_charac_kernel_rkhs_embed_measur,
    steinwart01:_suppor_vector_machin}, then $\mu$
  is an injective map, i.e., $\mu[F] = \mu[G]$ if and only if $F = G$.
\end{theorem}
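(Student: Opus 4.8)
The plan is to dispatch the four assertions separately, since each rests on a different technique. For the identity~\eqref{eq:5} I would start from the reproducing property $h(x) = \langle h, \kappa(x,\cdot)\rangle_{\mathcal{H}}$ together with the fact that $\mu[F]\in\mathcal{H}$. Under the standing integrability hypothesis on $\kappa$, a Bochner-integrability argument lets me interchange expectation and inner product to get $\mathbb{E}_F[h] = \langle h, \mu[F]\rangle_{\mathcal{H}}$, so that $\mathbb{E}_F[h]-\mathbb{E}_G[h] = \langle h, \mu[F]-\mu[G]\rangle_{\mathcal{H}}$. Maximizing over the unit ball then yields $\|\mu[F]-\mu[G]\|_{\mathcal{H}}$ by Cauchy--Schwarz, the extremal $h$ being proportional to $\mu[F]-\mu[G]$. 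Expanding the squared norm and again using Fubini with $\langle \kappa(x,\cdot),\kappa(x',\cdot)\rangle_{\mathcal{H}} = \kappa(x,x')$ produces the three-term expectation representation.

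For unbiasedness I would observe that each of the three summands defining $U_{n,m}$ is a (two-sample) U-statistic of degree two whose diagonal terms are deliberately omitted, so taking expectations termwise recovers exactly $\mathbb{E}[\kappa(X,X')] - 2\mathbb{E}[\kappa(X,Y)] + \mathbb{E}[\kappa(Y,Y')]$, which by~\eqref{eq:5} is $\|\mu[F]-\mu[G]\|_{\mathcal{H}}^{2}$. Consistency would then follow from the strong law of large numbers for U-statistics, with the boundedness of $\kappa$ on the compact $\Omega$ supplying the requisite moment control.

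The hard part will be the limiting distribution~\eqref{eq:mmd-X} under the null. When $F=G$ the estimand vanishes, so $U_{n,m}$ is a \emph{degenerate} U-statistic and the ordinary $\sqrt{n}$-scaled central limit theorem collapses; one must instead isolate the next-order term. The key device is the spectral (Mercer) expansion of the centered kernel, $\tilde{\kappa}(x,y) = \sum_{l} \lambda_{l}\,\psi_{l}(x)\psi_{l}(y)$, where the $\lambda_{l}$ and the orthonormal eigenfunctions $\psi_{l}$ are those of the integral operator $\mathcal{I}_{F,\tilde{\kappa}}$. Substituting this expansion into the pooled, centered statistic and applying a multivariate central limit theorem to the coordinatewise sums $\tfrac{1}{\sqrt{n}}\sum_{i}\psi_{l}(X_i)$ and $\tfrac{1}{\sqrt{m}}\sum_{k}\psi_{l}(Y_k)$ shows that each eigendirection contributes, after mean subtraction, an independent $\chi^{2}_{1}-1$ term; the prefactor $1/(\rho(1-\rho))$ emerges from recombining the three sample-size normalizations under $m/(m+n)\to\rho$. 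The delicate point is justifying the interchange of the infinite sum with the distributional limit, which I would control via a truncation argument exploiting the summability $\sum_{l}\lambda_{l}<\infty$ afforded by the bounded kernel.

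Finally, injectivity of $\mu$ for characteristic kernels is essentially the defining property, so there is nothing further to prove in that case. For universal kernels I would instead argue that $\mathcal{H}$ is dense in $C(\Omega)$ in the supremum norm; then $\mu[F]=\mu[G]$ forces $\mathbb{E}_F[h]=\mathbb{E}_G[h]$ for every $h\in\mathcal{H}$, and by density for every $h\in C(\Omega)$, whence $F=G$ by the Riesz representation theorem on the compact space $\Omega$.
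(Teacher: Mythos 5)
The paper never proves this theorem: it is presented as a summary of known properties of the maximum mean discrepancy, with all proofs deferred to the cited reference \citet{gretton12:_kernel_two_sampl_test}, so there is no internal argument to compare against. Your sketch correctly reconstructs the standard proofs from that literature — the reproducing-property and Cauchy--Schwarz derivation of Eq.~\eqref{eq:5}, termwise expectation plus the U-statistic strong law for unbiasedness and consistency, the degenerate U-statistic analysis of Eq.~\eqref{eq:mmd-X} via the Mercer expansion of $\tilde{\kappa}$ with truncation justified by $\sum_l \lambda_l < \infty$, and the denseness-plus-Riesz-representation argument for universal kernels — so it matches the approach the paper implicitly relies on.
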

% Lemma~\ref{lem:3}, or more specifically Eq.~\eqref{eq:5}, indicates
% that the maximum mean discrepancy is an element of the class of
% integral probability metrics between probability distributions
% \cite{muller97:_integ}. Other elements of this class are the total
% variation distance, the Monge-Wasserstein metric, and the Kolmogorov
% metric. In addition, 

\begin{remark}
  A kernel $\kappa \colon \mathcal{X} \times \mathcal{X} \mapsto
  \mathbb{R}$ is universal if $\kappa$ is a continuous function of
  both its arguments and if the reproducing kernel Hilbert space
  $\mathcal{H}$ induced by $\kappa$ is dense in the space of
  continuous functions on $\mathcal{X}$ with respect to the supremum
  norm. Let $\mathcal{M}$ be a family of Borel probability measures on
  $\mathcal{X}$. A kernel $\kappa$ is characteristic for $\mathcal{M}$
  if the map $\mu \in \mathcal{M} \mapsto \int \kappa(\cdot, z) \mu(
  dz)$ is injective. If $\kappa$ is universal, then $\kappa$ is
  characteristic for any $\mathcal{M}$
  \citep{sriperumbudur11:_univer_charac_kernel_rkhs_embed_measur}. As
  an example, let $\mathcal{X}$ be a finite dimensional Euclidean
  space and define, for any $q \in (0,2)$, $k_{q}(x,y) =
  \tfrac{1}{2}(\|x\|^{q} + \|y\|^{q} - \|x - y\|^{q})$. The kernels
  $k_{q}$ are then characteristic for the collection of probability
  distributions with finite second moments
  \citep{lyons11:_distan,sejdinovic13:_equiv_rkhs}. In addition, by
  Eq.~\eqref{eq:5}, the maximum mean discrepancy with reproducing
  kernel $k_{q}$ can be written as
\begin{equation*}
  \mathrm{MMD}^{2}(F,Q;k_{q}) = 2 \mathbb{E} \|X - Y\|^{q} - 
  \mathbb{E}\|X - X'\|^{q} - \mathbb{E} \|Y - Y'\|^{q}.
\end{equation*}
 where $X, X'$ are
independent with distribution $F$, $Y, Y'$ are independent with
distribution $G$, and $X, Y$ are independent.  This coincides with the
notion of the energy distances of \citet{szekely13:_energ}, or, when $q
= 1$, a special case of the one-dimensional interpoint comparisons
of \citet{maa96:_reduc}.
\end{remark}

\begin{remark} The limiting distribution of
  $(m+n)U_{n,m}(\mathbf{X}, \mathbf{Y})$ under the null hypothesis of
  $F = G$ in Theorem~\ref{thm:mmd_unbiased_limiting} depends
  on the $\{\lambda_l\}$ which, in turn, depend on the distribution
  $F$; thus the limiting distribution is not distribution-free.
  Moreover the eigenvalues $\{\lambda_l\}$ can, at best, be estimated;
  for finite $n$, they cannot be explicitly determined when $F$ is
  unknown. In practice, generally the critical values are estimated
  through a bootstrap resampling or permutation test.
\end{remark}

\section{Main Results}
\label{sec:main-results}
We now address the nonparametric two-sample hypothesis tests of
\S~\ref{sec:hypothesis-tests} using the methodology described in 
\S~\ref{sec:kernel-based-two}. 
Throughout, we shall always assume that the distributions of the latent positions
satisfy the following distinct eigenvalues assumption. The assumption
implies that the estimates of the latent position obtained by the
adjacency spectral embedding in Definition~\ref{def:ase} will,
in the limit, be uniquely determined. 
\begin{assumption}\label{ass:rank_F}
  The distribution $F$ for the latent positions
  $X_1, X_2, \dots, \sim F$ is such that the second moment matrix
  $\Ex[X_1 X_1^{\top}]$ has $d$ distinct eigenvalues and $d$ is known.
\end{assumption}
The motivation behind this assumption is as follows: the matrix
$\Ex[X_1 X_1^{\top}]$ is of rank $d$ with $d$ known so
that given a graph $\mathbf{A} \sim \mathrm{RDPG}(F)$, one can
construct the adjacency spectral embedding of $\mathbf{A}$ into the
``right'' Euclidean space. The requirement that
$\Ex[X_1 X_1^{\top}]$ has $d$ distinct eigenvalues is due to the
intrinsic property of non-identifiability of random dot
product graphs, i.e., for any random dot product graph
$\mathbf{A}$, the latent position $\mathbf{X}$ associated with
$\mathbf{A}$ can only be estimated up to some true but unknown
orthogonal transformation. Because we are concerned with two-sample
hypothesis testing, we must guard against the scenario in which we
have two graphs $\mathbf{A}$ and $\mathbf{B}$ with latent positions
$\mathbf{X} = \{X_i\}_{i=1}^{n} \overset{\mathrm{i.i.d}}{\sim} F$ and
$\mathbf{Y} = \{Y_k\}_{k=1}^{m} \overset{\mathrm{i.i.d}}{\sim} F$ but
their estimates $\hat{\mathbf{X}}$ and $\hat{\mathbf{Y}}$ lie in
different, incommensurate subspaces of $\mathbb{R}^{d}$. That is to
say, the estimates $\hat{\mathbf{X}}$ and $\hat{\mathbf{Y}}$ satisfy
$\hat{\mathbf{X}} \approx \mathbf{X} \mathbf{W}_1$ and
$\hat{\mathbf{Y}} \approx \mathbf{Y} \mathbf{W}_2$, but
$\|\mathbf{W}_1 - \mathbf{W}_2\|_{F}$ does not converge to $0$ as $n,m
\rightarrow \infty$. See also \citet{fishkind15:_incom_phenom} for
exposition of a related so-called ``incommensurability phenomenon."

%To preclude this, it is sufficient to show that, given a graph
%$\mathbf{A}$ with latent positions $\mathbf{X}$, and a graph
%$\mathbf{B}$ with latent position $\mathbf{Y}$, and common latent
%position distribution $F$, there exists a sequence of unknown matrices
%$\mathbf{W}_n$ such that $\hat{\mathbf{X}}$ and $\hat{\mathbf{Y}}$ are
%close to $\mathbf{X} \mathbf{W}_n$ and $\mathbf{Y} \mathbf{W}_n$,
%respectively.  This is the content of Lemma \ref{lem:2} in the next
%subsection.

Indeed, we recognize that Assumption \ref{ass:rank_F} is restrictive; in
particular, it is not satisfied by the stochastic block model with
$K>2$ blocks of equal size and edge probabilities $p$ within
communities and $q$ between communities.  However, we are not aware of
any two-sample nonparametric inference procedure in which the
incommensurability problem is resolved, and Assumption
\ref{ass:rank_F} still permits two-sample nonparametric inference on a
wide class of random graphs. % In the setting of our
% two-sample hypothesis test, given a sequence of 
% graphs $\mathbf{A}$ with latent positions $\mathbf{X}$, and a graph
% $\mathbf{B}$ with latent position $\mathbf{Y}$, and common latent
% position distribution $F$, Assumption~\ref{ass:rank_F} implies the
% existence of a sequence of unknown matrices
% $\mathbf{W}_n$ such that $\hat{\mathbf{X}}$ and $\hat{\mathbf{Y}}$ are
% close to $\mathbf{X} \mathbf{W}_n$ and $\mathbf{Y} \mathbf{W}_n$,
% respectively.  
 
\begin{remark}
  This issue of incommensurability is an intrinsic feature of many
  dimension reduction techniques, and is not simply an artificial
  complication that arises in graph estimation. Consider, for example,
  principal component analysis in the following setting. Let
  $\mathbf{X}, \mathbf{Y} \in \mathbb{R}^{n\times d}$ and suppose that
  the rows of $\mathbf{X}$ and $\mathbf{Y}$ are i.i.d from some
  distribution $F$.  Furthermore, suppose that $\mathbf{X}$ and
  $\mathbf{Y}$ are unobserved, but instead $\mathbf{X}$ and
  $\mathbf{Y}$ are to be estimated or recovered from some higher
  dimension data
  $\mathbf{X}^{*} = [\mathbf{X} \mid \mathbf{Z}] \in \mathbb{R}^{n
    \times D},$
  and
  $\mathbf{Y}^{*} = [\mathbf{Y} \mid \mathbf{Z}'] \in \mathbb{R}^{n
    \times D}$,
  say via principal component analysis, where $\mathbf{Z}$ and
  $\mathbf{Z}'$ are $n \times (D - d)$ matrices whose rows are i.i.d
  from some other distribution $H$. That is to say, $\mathbf{X}$ is
  recovered via principal component analysis of $\mathbf{X}^{*}$ into
  $\mathbb{R}^{d}$ and similarly for $\mathbf{Y}$. Then depending on
  the covariance structure of $F$ and $H$, the recovered $\mathbf{X}$
  and $\mathbf{Y}$ could lie in incommensurate subspaces. %  As an
%   example, suppose that $F$ is the distribution on $\mathbb{R}^{2}$
%   formed by independent copies of the half-normal distribution with
%   variance $1 - 2/\pi$ and $H$ is a multivariate normal distribution
%   on $\mathbb{R}^{3}$ with mean $0$ and covariance matrix
%   $\epsilon^{2} \mathbf{I}$ with $\epsilon = 0.1$. The orthogonal
%   transformations for recovering $\mathbf{X}$ and $\mathbf{Y}$ are
%   then ``random'' elements of the Stiefel manifold with 2-frames, or
%   the set of all $5 \times 2$ matrices with orthogonal
%   columns. Figure~\ref{fig:incommensurate} plots the empirical
%   distribution of the test statistics
%   $\mathbf{U}_{n,m}(\mathbf{X}, \mathbf{Y})$ described in
%   \S~\ref{sec:kernel-based-two} with kernel function
%   $\exp(\| \cdot - \cdot \|^2)$ for the setting where $\mathbf{X}$ and
%   $\mathbf{Y}$ is observed against the setting where $\mathbf{X}$ and
%   $\mathbf{Y}$ are estimated via principal component analysis. The
%   repeated eigenvalue structure of $F$ leads to incommensurate
%   orthogonal transformations for recovering $\mathbf{X}$ and
%   $\mathbf{Y}$, and this in turn yields inflated values of the test
%   statistics.
 \end{remark}

% \begin{figure}[hbtp]
%   \centering
%   \includegraphics[width=0.5\textwidth]{halfnorm}
%   \caption{Comparison of the test statistic
%     $U_{n,m}(\mathbf{X},\mathbf{Y})$ 
%     when $\mathbf{X}$ and $\mathbf{Y}$ are observed
%     (blue) and when they are estimated via principal component
%     analysis of $\mathbf{X}^{*} = [\mathbf{X} \mid \mathbf{Z}]$ and
%     $\mathbf{Y}^{*} = [\mathbf{Y} \mid \mathbf{Z}']$ (red). The rows
%     of $\mathbf{X}$ and $\mathbf{Y}$ are assumed to be i.i.d samples
%     from a distribution $F$ for which the second order moment matrix 
%     $\mathbb{E}[X_1 X_1^{\top}]$ has repeated eigenvalues. The
%     histograms are from $100$ samples of $\mathbf{X}$ and
%     $\mathbf{Y}$ where $\mathbf{X}$ and $\mathbf{Y}$ are of dimensions
%     $1000 \times 5$.}
%   \label{fig:incommensurate}
% \end{figure}
% To preclude problems of incommensurability in the setting of our
% two-sample hypothesis test, it is sufficient to show that, given a
% graph $\mathbf{A}$ with latent positions $\mathbf{X}$, and a graph
% $\mathbf{B}$ with latent position $\mathbf{Y}$, and common latent
% position distribution $F$, there exists a sequence of unknown matrices
% $\mathbf{W}_n$ such that $\hat{\mathbf{X}}$ and $\hat{\mathbf{Y}}$ are
% close to $\mathbf{X} \mathbf{W}_n$ and $\mathbf{Y} \mathbf{W}_n$,
% respectively.  % This is the content of Lemma \ref{lem:2} in the next
% % section.
\subsection{Two technical lemmas} \label{sec:2techlemma} We now state
two technical lemmas. The first lemma is the culmination 
of results from \citet{lyzinski13:_perfec} and \citet{tang14:_two}. The second lemma
lays the foundation for an empirical process result and is also a
central ingredient for showing the convergence to zero of a suitably
scaled version of our test statistic in the two-sample setting.
\begin{lemma}
  \label{lem:2}
  Let $(\mathbf{X},\mathbf{A}) \sim \mathrm{RDPG}(F)$%  and
  % $(\mathbf{Y},\mathbf{B}) \sim RDPG(G)$ 
  be a $d$-dimensional random dot
  product graph on $n$ vertices with latent position distributions $F$
  satisfying the conditions in Assumption~\ref{ass:rank_F}. % Suppose,
  % without loss of generality, that $n \geq m$. 
  Let $c > 0$ be
  arbitrary but fixed. There exists $n_0(c)$ such that if $n \geq n_0$
  and $\eta$ satisfies $n^{-c} < \eta < 1/4$, then there exists
  an orthogonal matrix $\mathbf{W}$%  and $\mathbf{W}_2$, 
  dependent on $\mathbf{X}$ such that, with probability at least $1 -
  4 \eta$,
  \begin{gather}
    \label{eq:2}
    \|\hat{\mathbf{X}} - \mathbf{XW} \|_{F} \leq C_1,
 \\  
    \label{eq:7}
     \|\hat{\mathbf{X}} - \mathbf{XW}\|_{2 \rightarrow \infty} \leq C_{2}
    \sqrt{\frac{\log{(n/\eta)}}{n}} ,
    % \label{eq:4}
    % \|\hat{\mathbf{Y}} - \mathbf{YW}_2\|_{F} 
    % \leq C(G) + C_1 \sqrt{\frac{\log{(m/\eta)}}{m}} + C_{2}
    % \sqrt{\log{(1/\eta)}}; \\ 
    % \label{eq:8}
    % \|\hat{\mathbf{Y}} - \mathbf{YW}_2\|_{2 \rightarrow \infty} \leq C_{3}
    % \sqrt{\frac{\log{(m/\eta)}}{m}}
  \end{gather}
  where $C_1$ and $C_2$ are constants depending only on $F$ and $n_0(c)$. 
  % Furthermore, if $F = G$ then there exist $\mathbf{W}_1 =
  % \mathbf{W}_2$ satisfying the above inequalities. 
\end{lemma}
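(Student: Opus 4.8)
The plan is to control $\hat{\mathbf{X}} - \mathbf{X}\mathbf{W}$ through a single leading-order term $(\mathbf{A} - \mathbf{P})\mathbf{U}_{\mathbf{P}}\mathbf{S}_{\mathbf{P}}^{-1/2}\mathbf{W}$, reading off the Frobenius bound \eqref{eq:2} and the row-wise bound \eqref{eq:7} from two \emph{different} norms of this one object. Write $\mathbf{P} := \mathbf{X}\mathbf{X}^{\top} = \mathbf{U}_{\mathbf{P}}\mathbf{S}_{\mathbf{P}}\mathbf{U}_{\mathbf{P}}^{\top}$ for the rank-$d$ eigendecomposition, and, exploiting the orthogonal nonidentifiability, fix the representative $\mathbf{X} = \mathbf{U}_{\mathbf{P}}\mathbf{S}_{\mathbf{P}}^{1/2}$; everything below is conditional on $\mathbf{X}$. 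I would first record two facts, each on an event of probability at least $1-\eta$. First, since $n^{-1}\mathbf{X}^{\top}\mathbf{X} \to \Ex[X_1 X_1^{\top}]$, whose eigenvalues are distinct and positive by Assumption~\ref{ass:rank_F}, the nonzero eigenvalues of $\mathbf{P}$ are of order $n$ with pairwise gaps $\delta = \Theta(n)$, so that $\|\mathbf{S}_{\mathbf{P}}^{-1/2}\| = O(n^{-1/2})$. Second, by the concentration inequality of \citet{oliveira2009concentration}, $\|\mathbf{A}-\mathbf{P}\| \leq C\sqrt{n\log(n/\eta)}$. Weyl's inequality then places the top $d$ eigenvalues of $\mathbf{A}$ at order $n$ and positive, so the leading eigenspaces of $|\mathbf{A}|$ and $\mathbf{A}$ coincide and $\hat{\mathbf{X}}$ is well defined on this event.

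Next I would invoke the Davis--Kahan theorem \citep{davis70} with gap $\delta = \Theta(n)$ to produce the orthogonal $\mathbf{W}$ (the orthogonal factor in the singular value decomposition of $\mathbf{U}_{\mathbf{A}}^{\top}\mathbf{U}_{\mathbf{P}}$) together with $\|\mathbf{U}_{\mathbf{A}}\mathbf{W} - \mathbf{U}_{\mathbf{P}}\|_{F} = O(\sqrt{\log(n/\eta)/n})$, and the companion estimate $\|\mathbf{S}_{\mathbf{A}} - \mathbf{W}^{\top}\mathbf{S}_{\mathbf{P}}\mathbf{W}\| = O(\sqrt{n\log(n/\eta)})$ from Weyl. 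Using the identities $\hat{\mathbf{X}} = \mathbf{A}\mathbf{U}_{\mathbf{A}}\mathbf{S}_{\mathbf{A}}^{-1/2}$ and $\mathbf{X} = \mathbf{P}\mathbf{U}_{\mathbf{P}}\mathbf{S}_{\mathbf{P}}^{-1/2} = \mathbf{P}\mathbf{X}(\mathbf{X}^{\top}\mathbf{X})^{-1}$, a telescoping across the three factors $\mathbf{A}\leftrightarrow\mathbf{P}$, $\mathbf{U}_{\mathbf{A}}\leftrightarrow\mathbf{U}_{\mathbf{P}}$, $\mathbf{S}_{\mathbf{A}}\leftrightarrow\mathbf{S}_{\mathbf{P}}$ yields
\[
\hat{\mathbf{X}} - \mathbf{X}\mathbf{W} = (\mathbf{A}-\mathbf{P})\mathbf{U}_{\mathbf{P}}\mathbf{S}_{\mathbf{P}}^{-1/2}\mathbf{W} + \mathbf{R},
\]
where the remainder $\mathbf{R}$ collects every term carrying a factor of $\mathbf{U}_{\mathbf{A}}\mathbf{W} - \mathbf{U}_{\mathbf{P}}$, of $\mathbf{S}_{\mathbf{A}} - \mathbf{W}^{\top}\mathbf{S}_{\mathbf{P}}\mathbf{W}$, or the hollowing of the diagonal of $\mathbf{A}$.

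For \eqref{eq:2} I would bound the leading term in Frobenius norm, and the key point is \emph{not} to pass through $\|\mathbf{A}-\mathbf{P}\|$, which is too lossy, but to note that the columns of $(\mathbf{A}-\mathbf{P})\mathbf{U}_{\mathbf{P}}$ are $(\mathbf{A}-\mathbf{P})\bm{u}_k$ with $\Ex\|(\mathbf{A}-\mathbf{P})\bm{u}_k\|^{2} = \bm{u}_k^{\top}\Ex[(\mathbf{A}-\mathbf{P})^{2}]\bm{u}_k = O(n)$, since $\Ex[(\mathbf{A}-\mathbf{P})^{2}]$ has $O(n)$ diagonal entries and bounded Bernoulli variances; a concentration bound upgrades this to $\|(\mathbf{A}-\mathbf{P})\mathbf{U}_{\mathbf{P}}\|_{F} = O(\sqrt{n})$ with probability $1-\eta$. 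Multiplying by $\|\mathbf{S}_{\mathbf{P}}^{-1/2}\| = O(n^{-1/2})$ gives a leading term of Frobenius norm $O(1)$, while $\mathbf{R}$ is $o(1)$ upon combining $\|\mathbf{U}_{\mathbf{A}}\mathbf{W} - \mathbf{U}_{\mathbf{P}}\|_{F}$, the spectral bound on $\mathbf{A}-\mathbf{P}$, and the $\Theta(n)$ gap; this yields \eqref{eq:2}.

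The main obstacle is \eqref{eq:7}, and it is precisely the dependence between $\mathbf{A}$ and its own eigenvectors $\mathbf{U}_{\mathbf{A}}$: one cannot apply a row-wise tail bound to $\mathbf{U}_{\mathbf{A}}$ directly, since each of its entries is a function of the entire matrix. The decomposition above is engineered to sidestep this, because the leading term involves only the (given $\mathbf{X}$) \emph{deterministic} matrix $\mathbf{U}_{\mathbf{P}}\mathbf{S}_{\mathbf{P}}^{-1/2}$: its $i$-th row is $(\mathbf{A}-\mathbf{P})_{i,\cdot}\,\mathbf{U}_{\mathbf{P}}\mathbf{S}_{\mathbf{P}}^{-1/2}$, a weighted sum of the independent, mean-zero, bounded variables $\{\mathbf{A}_{ij}-\mathbf{P}_{ij}\}_{j}$, so a Hoeffding (or Bernstein) bound gives this row Euclidean norm at most $O(\sqrt{\log(n/\eta)})\cdot\|\mathbf{S}_{\mathbf{P}}^{-1/2}\| = O(\sqrt{\log(n/\eta)/n})$, with the union bound over the $n$ rows absorbed into the $\log(n/\eta)$ factor. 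It remains to show $\|\mathbf{R}\|_{2\rightarrow\infty}$ is of strictly smaller order, and this is the delicate step supplied by \citet{lyzinski13:_perfec} and \citet{tang14:_two}: each remainder is controlled by pairing the Frobenius bound \eqref{eq:2} and the spectral bound on $\mathbf{A}-\mathbf{P}$ against the $\Theta(n)$ eigengap, the diagonal correction contributing only $O(n^{-1/2})$ per row. Intersecting the (at most four) events above gives probability $1-4\eta$ and completes \eqref{eq:7}.
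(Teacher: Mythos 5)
Your overall architecture---isolating the leading term $(\mathbf{A}-\mathbf{P})\mathbf{U}_{\mathbf{P}}\mathbf{S}_{\mathbf{P}}^{-1/2}$, bounding its rows by Hoeffding for \eqref{eq:7} and its Frobenius norm via the variance computation $\bm{u}_k^{\top}\Ex[(\mathbf{A}-\mathbf{P})^{2}]\bm{u}_k = O(n)$ for \eqref{eq:2}---is exactly the mechanism underlying the results this paper invokes: the paper itself gives no proof of Lemma~\ref{lem:2} beyond citing Lemma~2.5 of \citet{lyzinski13:_perfec} for \eqref{eq:7} and Theorem~2.3 of \citet{tang14:_two} for \eqref{eq:2}, and the same three-term decomposition you write down reappears in the paper's own proof of Lemma~\ref{lem:emp_proc}. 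Your deferral of the row-wise ($2\to\infty$) control of the remainder to those references is likewise consistent with what the paper does.

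However, your treatment of \eqref{eq:2} has a genuine gap: the remainder $\mathbf{R}$ is \emph{not} $o(1)$ in Frobenius norm ``upon combining $\|\mathbf{U}_{\mathbf{A}}\mathbf{W}-\mathbf{U}_{\mathbf{P}}\|_{F}$, the spectral bound on $\mathbf{A}-\mathbf{P}$, and the $\Theta(n)$ gap.'' Consider the eigenvalue piece $\mathbf{A}\mathbf{U}_{\mathbf{P}}(\mathbf{S}_{\mathbf{A}}^{-1/2}-\mathbf{S}_{\mathbf{P}}^{-1/2})$: your Weyl estimate $|\lambda_i(\mathbf{A})-\lambda_i(\mathbf{P})| = O(\sqrt{n\log(n/\eta)})$ only gives $|\lambda_i(\mathbf{A})^{-1/2}-\lambda_i(\mathbf{P})^{-1/2}| = O(\sqrt{\log(n/\eta)}/n)$, and multiplying by $\|\mathbf{A}\bm{u}_i\| = O(n)$ leaves $O(\sqrt{\log(n/\eta)})$, which already exceeds any constant $C_1$, so \eqref{eq:2} does not follow. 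The eigenvector piece fails the same way under the naive product bound, since $\|\mathbf{A}\|\cdot\|\mathbf{U}_{\mathbf{A}}\mathbf{W}-\mathbf{U}_{\mathbf{P}}\|_F\cdot\|\mathbf{S}_{\mathbf{A}}^{-1/2}\| = O(n)\cdot O(\sqrt{\log(n/\eta)/n})\cdot O(n^{-1/2}) = O(\sqrt{\log(n/\eta)})$. The missing ingredient is a much sharper eigenvalue concentration, $\|\mathbf{S}_{\mathbf{A}}-\mathbf{S}_{\mathbf{P}}\|_{F} = O(\log(n/\eta))$, obtained not from Weyl plus $\|\mathbf{A}-\mathbf{P}\|$ but from scalar Hoeffding bounds on the quadratic forms $\bm{u}_i^{\top}(\mathbf{A}-\mathbf{P})\bm{u}_i$ (whose Hoeffding variance proxy is $O(1)$, not $O(n)$) together with a second-order perturbation argument; this is Lemma~3.2 of \citet{lyzinski13:_perfec}, restated as Eq.~\eqref{eq:18} of Lemma~\ref{lem:4} in the appendix of this paper, and the paper explicitly flags in \S~\ref{sec:scaling_proof} that this bound ``is much stronger than that obtained from Weyl's inequality and a concentration bound for $\|\mathbf{A}-\mathbf{P}\|$.'' A parallel refinement is needed for the eigenvector piece, e.g.\ splitting $\mathbf{A} = \mathbf{P} + (\mathbf{A}-\mathbf{P})$ and using that the polar factor satisfies $\|\mathbf{U}_{\mathbf{P}}^{\top}\mathbf{U}_{\mathbf{A}}-\mathbf{W}\|_{F} = O(\|\sin\Theta\|_{F}^{2}) = O(\log(n/\eta)/n)$, which is Lemma~3.4 there (Eq.~\eqref{eq:17} here). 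With those two inputs $\|\mathbf{R}\|_{F} = O(\log(n/\eta)/\sqrt{n})$ and your argument for \eqref{eq:2} goes through; without them, it stalls at $O(\sqrt{\log(n/\eta)})$.
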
 

Lemma~\ref{lem:2} bounds %{\em simultaneously}, 
the difference between
$\hat{\mathbf{X}}$ and $\mathbf{X}$ %  and the difference between
% $\hat{\mathbf{Y}}$ and $\mathbf{Y}$ for two choices of matrix norms,
namely the Frobenius norm $\|\cdot\|_{F}$ and the maximum of the $l_2$
norms of the rows $\|\cdot\|_{2 \rightarrow \infty}$. The norm
$\|\cdot\|_{2 \rightarrow \infty}$ is induced by the vector norms
$\|\cdot\|_{2}$ and $\|\cdot\|_{\infty}$ via $\|\mathbf{A}\|_{2
  \rightarrow \infty} = \max_{\|\bm{x}\|_{2} = 1} \|\mathbf{A}
\bm{x}\|_{\infty}$. % The lemma is an extension and amalgamation of similar results
% from \citet{lyzinski13:_perfec} and \citet{tang14:_two} for
% distributions $F$ and $G$ satisfying the conditions in Assumption
% 1. In particular,
Eq.~\eqref{eq:7} follows from
Lemma~2.5 in \citet{lyzinski13:_perfec} while Eq.~\eqref{eq:2} %and
% Eq.~\eqref{eq:8} 
follows from Theorem~2.3 in \citet{tang14:_two}. % The
% novelty of Lemma~\ref{lem:2}, and its difference from the cited results, is in
% showing that in the case when $F = G$, one can choose $\mathbf{W}_1 =
% \mathbf{W}_2$. 

As a quick application of Lemma~\ref{lem:2},
suppose $(\mathbf{X},\mathbf{A}) \sim \mathrm{RDPG}(F)$ and 
$(\mathbf{Y}, \mathbf{B}) \sim \mathrm{RDPG}(G)$ where the
latent position distributions $F$ and $G$ satisfy the distinct eigenvalues
assumption and consider the hypothesis test of $\mathbb{H}_0 \colon F
\upVdash G$. Let $\kappa$ be a differentiable radial kernel and $U_{n,m}(\hat{\mathbf{X}},
\hat{\mathbf{Y}})$ is defined as
\begin{equation*}
%  \label{eq:11*}
  \begin{split}
    U_{n,m}(\hat{\mathbf{X}}, \hat{\mathbf{Y}}) &= \frac{1}{n(n-1)}
    \sum_{j \not = i}
    \kappa(\hat{X}_i,
    \hat{X}_j) - \frac{2}{mn} \sum_{i=1}^{n}
    \sum_{k=1}^{m} \kappa(\hat{X}_i,
    \hat{Y}_k) + \frac{1}{m(m-1)} \sum_{l \not = k} \kappa(\hat{Y}_k, \hat{Y}_l).
  \end{split}
\end{equation*}
Then there exists a deterministic unitary matrix $\mathbf{W}_0$ such that
\begin{equation*}
  U_{n,m}(\hat{\mathbf{X}}, \hat{\mathbf{Y}}) - U_{n,m}(\mathbf{X},
  \mathbf{Y} \mathbf{W}_0) \rightarrow 0
\end{equation*}
almost surely as $n, m \rightarrow \infty$. This can be seen as
follows. Let $\mathbf{W}_n$ and
$\mathbf{V}_m$ be orthogonal matrices in the eigendecomposition
$\mathbf{W}_n \mathbf{S}_1 \mathbf{W}_n = \mathbf{X}^{\top}
\mathbf{X}$, $\mathbf{V}_m \mathbf{S}_2 \mathbf{V}_m = \mathbf{Y}^{\top}
\mathbf{Y}$, respectively. Then 
 \begin{equation*}
  \begin{split}
  U_{n,m}(\hat{\mathbf{X}}, \hat{\mathbf{Y}}) - U_{n,m}(\mathbf{X} \mathbf{W}_n,
  \mathbf{Y} \mathbf{V}_m ) & = \frac{1}{n(n-1)}
    \sum_{j \not = i}
    (\kappa( \hat{X}_i,
     \hat{X}_j) - \kappa(\mathbf{W}_n X_i, \mathbf{W}_n X_j)) \\ &- \frac{2}{mn} \sum_{i=1}^{n}
    \sum_{k=1}^{m} (\kappa(\hat{X}_i,
    \hat{Y}_k) - \kappa(\mathbf{W}_n X_i, \mathbf{V}_m Y_k)) \\ &+ \frac{1}{m(m-1)} \sum_{l \not
      = k} \kappa( \hat{Y}_k,  \hat{Y}_l) -
    \kappa(\mathbf{V}_m Y_k, \mathbf{V}_m Y_l)).
    \end{split}
\end{equation*}
By differentiability of $\kappa$ and compactness of $\Omega$, we have
\begin{equation*}
  |\kappa( \hat{X}_i,  \hat{X}_j) - \kappa(\mathbf{W}_n X_i,
  \mathbf{W}_n X_j)| \leq C \max \{ \| \hat{X}_i - \mathbf{W}_n X_i \|, \|
  \hat{X}_j - \mathbf{W}_n X_j \| \} \leq C \|\hat{\mathbf{X}} -  
  \mathbf{X} \mathbf{W}_n \|_{2 \to \infty}. 
\end{equation*}
for some constant $C$ independent of $i$ and $j$. Similarly
\begin{gather*}
|\kappa(\hat{Y}_k, \hat{Y}_l) -
\kappa(\mathbf{V}_m Y_k,
  \mathbf{V}_m Y_l)| \leq C \|\hat{\mathbf{Y}} - 
  \mathbf{Y} \mathbf{V}_m \|_{2 \to \infty}, \\
  |\kappa( \hat{X}_i,  \hat{Y}_k) - \kappa(\mathbf{W}_n X_i,
  \mathbf{V}_m Y_k)| \leq C (\|\hat{\mathbf{X}} -
  \mathbf{X} \mathbf{W}_n \|_{2 \to \infty} + \|\hat{\mathbf{Y}}  -
  \mathbf{Y} \mathbf{V}_m \|_{2 \to \infty} ).
\end{gather*}
Thus \begin{equation*}
   |U_{n,m}(\hat{\mathbf{X}}, \hat{\mathbf{Y}}) - U_{n,m}(\mathbf{X}
   \mathbf{W}_n, 
  \mathbf{Y} \mathbf{V}_m) |  \leq 2C (\|\hat{\mathbf{X}} - 
  \mathbf{X} \mathbf{W}_n \|_{2 \to \infty} +  \|\hat{\mathbf{Y}}  -
  \mathbf{Y} \mathbf{V}_m \|_{2 \to \infty})
  \end{equation*}
  which converges, by Lemma~\ref{lem:2}, to zero almost surely as $n, m
  \rightarrow \infty$. Furthermore, $$U_{n,m}(\mathbf{X}
  \mathbf{W}_n, \mathbf{Y} \mathbf{V}_m) = U_{n,m}(\mathbf{X},
  \mathbf{Y} \mathbf{V}_m \mathbf{W}_n^{\top})$$ as $\kappa$ is a
  radial kernel. We have that 
\begin{align*}
n^{-1} \mathbf{X}^{\top}
  \mathbf{X} &= n^{-1} \mathbf{W}_1^{\top} \mathbf{S}_1
  \mathbf{W}_1 \textrm{ and }\\
m^{-1} \mathbf{Y}^{\top} \mathbf{Y} &=
  m^{-1} \mathbf{W}_2^{\top} \mathbf{S}_2 \mathbf{W}_2
\end{align*} 
are $\sqrt{n}$-consistent and $\sqrt{m}$-consistent estimators of $\mathbb{E}[X_1
  X_1^{\top}]$ and $\mathbb{E}[Y_1 Y_1^{\top}]$, respectively.  
  Since $F$ and $G$ satisfy the distinct
  eigenvalues condition, we can apply the Davis-Kahan theorem to each
  individual eigenvectors  of $\mathbb{E}[X_1
  X_1^{\top}]$ and $\mathbb{E}[Y_1 Y_1^{\top}]$, thereby showing that $\mathbf{W}_n$ and
  $\mathbf{V}_m$ are $\sqrt{n}$-consistent and $\sqrt{m}$-consistent
  estimator of the corresponding orthogonal matrices in the
  eigendecomposition of $\mathbb{E}[X_1 X_1^{\top}]$ and
  $\mathbb{E}[Y_1 Y_1^{\top}]$, respectively. If $F \upVdash
  G$, i.e., $F = G \circ \mathbf{W}_0$ for
  $\mathbf{W}_0$ orthogonal, then $\mathbf{V}_m
  \mathbf{W}_n^{\top} = \mathbf{W}_0 + O(\max\{n^{-1/2}, m^{-1/2}\})$
  and hence
  \begin{align*}
    |U_{n,m}(\hat{\mathbf{X}}, \hat{\mathbf{Y}}) - U_{n,m}(\mathbf{X}, 
  \mathbf{Y} \mathbf{W}_0)| = & |U_{n,m}(\hat{\mathbf{X}},
  \hat{\mathbf{Y}}) - U_{n,m}(\mathbf{X}, \mathbf{Y} \mathbf{V}_m
  \mathbf{W}_n^{\top})|\\
& + O(\max\{n^{-1/2}, m^{-1/2}\})
  \end{align*}
  which also converges to zero almost surely. That is to say, the test statistic based on the
  estimated latent position converges to the statistic based on the
  true but unknown latent positions. Thus one can construct, using the
  test statistics $U_{n,m}(\hat{\mathbf{X}}, \hat{\mathbf{Y}})$, a
  test procedure for $\mathbb{H}_0 \colon F \upVdash G$ that is {\em
    consistent} against all fixed alternatives $F \not \upVdash G$.
  This is in essence a first order result; in this regard, it is
  similar in spirit to first order consistency results
  for spectral clustering \citep{sussman12} and vertex classification
  \citep{sussman12:_univer}.  However, as we recall
  from Theorem~\ref{thm:mmd_unbiased_limiting}, in order to obtain a
  non-degenerate limiting distribution, we want to consider the scaled
  statistics $(m+n) U_{n,m}(\mathbf{X}, \mathbf{Y})$. Showing the
  convergence to zero of $(m+n)(U_{n,m}(\hat{\mathbf{X}},
  \hat{\mathbf{Y}}) - U_{n,m}(\mathbf{X}, \mathbf{Y} \mathbf{V}_m \mathbf{W}_n^{\top}))$
  is much more involved and is the main impetus behind the following
  lemma.
  
\begin{lemma}\label{lem:emp_proc}
% Denote by $(\partial \Phi(Z)) \in \{f \colon \mathbb{R}^{d} \mapsto
%   \mathbb{R}^{d}\} $ the gradient of the feature
%   map $\Phi$ at $Z$,
%   \begin{equation*}
%     (\partial \Phi(Z))(\cdot) = \frac{\partial \kappa(X,\cdot)}{\partial X}
%     \Bigr \rvert_{X = Z}.    
% \end{equation*} 
  Let $\kappa$ be a twice continuously differentiable kernel. Let $\mathcal{F}_{\Phi} =
  \{ \Phi(Z) \colon Z \in \Omega \}$ where $\Phi(Z) =
  \kappa(\cdot, Z)$ is the feature map of $\kappa$, i.e., $f \in \mathcal{F}_{\Phi}$ if $f(X) = \kappa(X, Z)$ for some $Z$. 
  % and denote by
  % $\mathcal{\Phi}$ the collection of functions $\{\Phi(X) \colon X \in
  % \Omega \}$. 
  Suppose $(\mathbf{X}_n, \mathbf{A}_n) \sim \mathrm{RDPG}(F)$ for $n = 1,2,\dots$
  is a sequence of $d$-dimensional random dot product graphs and the
  latent positions distribution $F$ satisfies the distinct eigenvalues condition in
  Assumption~\ref{ass:rank_F}. Denote by $\mathbf{W}_n$ the orthogonal
  matrix in the eigendecomposition $\mathbf{W}_n \mathbf{S}_n
  \mathbf{W}_n^{\top} = \mathbf{X}_n^{\top} \mathbf{X}_n$. Then as $n
  \rightarrow \infty$, the sequence $\mathbf{W}_n$ satisfies
 $$\sup_{f \in \mathcal{F}_{\Phi}} \Bigl|\frac{1}{\sqrt{n}} \sum_{i=1}^n
\Bigl(f(\mathbf{W}_n \hat{X}_i) - f(X_i)\Bigr)\Bigr| \rightarrow 0$$
almost surely, where $\hat{\mathbf{X}}_n = \{\hat{X}_i\}_{i=1}^{n}$ is the adjacency spectral embedding of $\mathbf{A}_n$. 
\end{lemma}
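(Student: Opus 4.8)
The plan is to reduce the supremum to a bilinear form in the independent edge noise and then control that form by a concentration-plus-chaining argument. Write $g_Z(x) = \kappa(x,Z)$, so the target is $\sup_{Z \in \Omega}\bigl|n^{-1/2}\sum_{i=1}^n (g_Z(\mathbf{W}_n \hat{X}_i) - g_Z(X_i))\bigr|$, and set $\delta_i = \mathbf{W}_n \hat{X}_i - X_i$. First I would check, using Assumption~\ref{ass:rank_F} and the fact that $n^{-1}\mathbf{X}_n^\top\mathbf{X}_n \to \mathbb{E}[X_1 X_1^\top]$ has distinct eigenvalues, that the eigenvector rotation $\mathbf{W}_n$ of $\mathbf{X}_n^\top\mathbf{X}_n$ agrees (up to sign choices and an $O(n^{-1/2})$ error) with the alignment matrix of Lemma~\ref{lem:2}, so that $\max_i \|\delta_i\| = \|\hat{\mathbf{X}}\mathbf{W}_n^\top - \mathbf{X}\|_{2 \to \infty} \le C\sqrt{\log(n)/n}$ with high probability. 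A second-order Taylor expansion of $g_Z$ around $X_i$ then gives $g_Z(\mathbf{W}_n\hat{X}_i) - g_Z(X_i) = \nabla g_Z(X_i)^\top \delta_i + \tfrac12 \delta_i^\top \nabla^2 g_Z(\xi_i)\delta_i$. Since $\kappa$ is twice continuously differentiable and $\Omega$ is compact, $\nabla^2 g_Z$ is bounded uniformly in $Z$, so the summed and scaled quadratic remainder is at most $n^{-1/2}\cdot n \cdot O(\max_i\|\delta_i\|^2) = O(n^{-1/2}\log n) \to 0$ uniformly in $Z$. This reduces the problem to the linear term $n^{-1/2}\sum_i \nabla g_Z(X_i)^\top \delta_i$.

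For the linear term I would substitute the first-order expansion of the adjacency spectral embedding, $\delta_i = n^{-1}\mathbf{M}^{-1}\sum_j (\mathbf{A}_{ij} - \mathbf{P}_{ij})X_j + r_i$, where $\mathbf{P} = \mathbf{X}\mathbf{X}^\top$, $\mathbf{M} = n^{-1}\mathbf{X}^\top\mathbf{X}$, and $r_i$ collects the higher-order residual. One must verify that this residual is genuinely second order, $\max_i\|r_i\| = O(\log(n)/n)$, in which case its contribution is $n^{-1/2}\cdot n \cdot O(\log(n)/n) = O(n^{-1/2}\log n) \to 0$ and may be discarded by the triangle inequality. The leading piece becomes the bilinear form $n^{-3/2}\sum_{i\neq j}(\mathbf{A}_{ij} - \mathbf{P}_{ij}) c_{ij}(Z)$ with $c_{ij}(Z) = \nabla g_Z(X_i)^\top \mathbf{M}^{-1} X_j$, the diagonal $i=j$ contributing only $O(n^{-1/2})$. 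Crucially, one cannot bound this piece by absolute values: that route reproduces the divergent $O(\sqrt{\log n})$ estimate, so the cancellation among the independent, mean-zero, bounded variables $\{\mathbf{A}_{ij}-\mathbf{P}_{ij}\}_{i<j}$ must be exploited.

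For a fixed $Z$, conditioning on $\mathbf{X}$ and applying Hoeffding's inequality to $\sum_{i<j}(\mathbf{A}_{ij}-\mathbf{P}_{ij})(c_{ij}(Z)+c_{ji}(Z))$ -- whose coefficients are uniformly bounded by compactness of $\Omega$ and the eventual boundedness of $\mathbf{M}^{-1}$ -- yields a tail of the form $\mathbb{P}[n^{-3/2}|\,\cdot\,| > \epsilon \mid \mathbf{X}] \le 2e^{-c\epsilon^2 n}$. To pass to the supremum I would take a $\rho_n$-net of $\Omega$ with $\rho_n = o(n^{-1/2})$; because $Z \mapsto \nabla g_Z$ is Lipschitz (again by $\kappa \in C^2$ and compactness of $\Omega$), the oscillation of the bilinear form across each net cell is $O(\sqrt{n}\,\rho_n) = o(1)$, while the net contains only $\mathrm{poly}(n)$ points. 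A union bound over the net points combined with the exponential tail gives a failure probability of order $\mathrm{poly}(n)\, e^{-cn}$, which is summable in $n$; the Borel--Cantelli lemma then yields the claimed almost-sure uniform convergence.

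The hard part is precisely the linear term: establishing the first-order expansion of $\delta_i$ with a residual that is genuinely second order in the $\|\cdot\|_{2\to\infty}$ norm, so that only the explicitly linear, cancellation-exhibiting piece survives, and then proving the accompanying concentration inequality for the supremum of the resulting bilinear empirical process uniformly over the feature class $\mathcal{F}_{\Phi}$. This is exactly the ``novel concentration inequality for the suprema of an empirical process in the estimated latent positions setting'' advertised in the abstract; by contrast the Taylor remainder and the embedding residual are disposed of by the crude $\|\cdot\|_{2\to\infty}$ bounds already supplied by Lemma~\ref{lem:2}.
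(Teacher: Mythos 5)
Your proposal is correct and follows essentially the same route as the paper's proof: a Taylor expansion isolating a linear term plus a quadratic remainder disposed of via Lemma~\ref{lem:2}; a decomposition of the embedding error into a leading bilinear piece plus a spectral residual; Hoeffding's inequality, conditional on $\mathbf{X}$, applied to the mean-zero bounded variables $\{\mathbf{A}_{ij}-\mathbf{P}_{ij}\}_{i<j}$; and a discretization, union bound, and Borel--Cantelli finish. Indeed, your leading term $(\mathbf{A}-\mathbf{P})\mathbf{X}(\mathbf{X}^{\top}\mathbf{X})^{-1}$, whose $i$-th row is $n^{-1}\mathbf{M}^{-1}\sum_{j}(\mathbf{A}_{ij}-\mathbf{P}_{ij})X_j$, is exactly the paper's term $(\mathbf{A}-\mathbf{P})\mathbf{U}_{\mathbf{P}}\mathbf{S}_{\mathbf{P}}^{-1/2}\mathbf{W}$, since $\mathbf{X}=\mathbf{U}_{\mathbf{P}}\mathbf{S}_{\mathbf{P}}^{1/2}\mathbf{W}$; and your residual $r_i$ collects precisely the paper's two correction terms $\mathbf{A}(\mathbf{U}_{\mathbf{A}}-\mathbf{U}_{\mathbf{P}})\mathbf{S}_{\mathbf{A}}^{-1/2}+\mathbf{A}\mathbf{U}_{\mathbf{P}}(\mathbf{S}_{\mathbf{A}}^{-1/2}-\mathbf{S}_{\mathbf{P}}^{-1/2})$. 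Two variations deserve comment. First, where the paper runs a dyadic chaining argument over $\partial\mathcal{F}_{\Phi}$ with covering numbers, you use a single $\rho_n$-net of $\Omega$ plus Lipschitz continuity of $Z\mapsto\nabla g_Z$; this is legitimate and in fact simpler here, because $\mathcal{F}_{\Phi}$ is parametrized by the compact finite-dimensional set $\Omega$, the Hoeffding tail $e^{-c\epsilon^{2}n}$ beats any polynomial net cardinality, and the $O(\sqrt{n}\,\rho_n)$ oscillation across net cells is controlled deterministically (given $|\mathbf{A}_{ij}-\mathbf{P}_{ij}|\le 1$ and $\|\mathbf{M}^{-1}\|$ bounded). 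Second, a caution rather than a gap: the row-wise residual bound you posit, $\max_i\|r_i\|=O(\log n/n)$, is stronger than anything Lemma~\ref{lem:2} or the supporting spectral estimates (Lemma~\ref{lem:4}, imported from prior work) actually deliver --- those give only a Frobenius bound of order $\log n/\sqrt{n}$ for the residual matrix $R$, and establishing a uniform row-wise $O(\log n/n)$ rate is a substantially harder result than the lemma being proved. Fortunately your argument never needs it: the discarded contribution is $n^{-1/2}\sum_i \nabla g_Z(X_i)^{\top} r_i$, and by Cauchy--Schwarz $n^{-1/2}\sum_i\|r_i\|\le\|R\|_F\to 0$, so the available aggregate bound suffices; you should replace the row-wise claim by this Frobenius-norm step.
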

Lemma~\ref{lem:emp_proc} is the main technical result of this
paper. 
% Previous results on consistency of adjacency spectral
% embedding, such as those in
% \cite{sussman12:_univer,tangs.:_univer,sussman12,athreya2013limit,lyzinski13:_perfec}
% had been focused on bounds such as those given in Lemma~\ref{lem:2}
% for $\|\hat{\mathbf{X}} - \mathbf{X} \mathbf{W} \|_{F}$ and
% $\|\hat{\mathbf{X}} - \mathbf{X} \mathbf{W}\|_{2 \to \infty}$. The
% bounds for $\|\hat{\mathbf{X}} - \mathbf{X} \mathbf{W} \|_{F}$ and
% $\|\hat{\mathbf{X}} - \mathbf{X} \mathbf{W}\|_{2 \to \infty}$
% suggest that subsequent inference, e.g. clustering and
% classification, based on the estimated latent positions is possible.
Using the bound on $\|\hat{\mathbf{X}} - \mathbf{X}
\mathbf{W}\|_{2 \to \infty}$ from  Lemma~\ref{lem:2} implies that for some class of
continuous functions $\mathcal{F}$, e.g., continuous functions
 of the form $\phi(\|\cdot - c\|)$ for all $c$ in a compact subset
 of $\mathbb{R}^{d}$ , 
 there exists a sequence of orthogonal matrices $\mathbf{W}_n$
 such that
\begin{equation*}
  \sup_{f \in \mathcal{F}} \Bigl|\frac{1}{n} \sum_{i=1}^{n} (f(\mathbf{W}_{n} \hat{X}_i) -
  f(X_i)) \Bigr| \rightarrow 0
\end{equation*}
almost surely as $n \rightarrow \infty$ \citep[Theorem~15]{lyzinski13:_perfec}. Lemma~\ref{lem:emp_proc}
improves upon this; for some special class $\mathcal{F}$, 
the above also holds with the factor $1/n$ replaced by a factor of
$1/\sqrt{n}$. 

The proof of Lemma~\ref{lem:emp_proc} is given in the appendix.  A
rough sketch of the proof is as follows. For fixed
$f \in \mathcal{F}_{\Phi}$, a Taylor expansion allows one to write
$n^{-1/2} \sum_{i=1}^n \Bigl(f(\mathbf{W}_n \hat{X}_i) - f(X_i)\Bigr)$ in terms
of
$\sum_{i} \lambda_{i}^{-1/2} \bm{v}_{i}^{\top}(\mathbf{A} -
\mathbf{P}) \bm{u}_i$
for unit vectors $\bm{v}_i$ depending on $f$ and $\bm{u}_i$ depending
on $\{X_i\}$; here $\lambda_{i}$ are the eigenvalues of
$\mathbf{P}$. Hoeffding's inequality applied to the sum
$\sum_{i} \lambda_{i}^{-1/2} \bm{u}_{i}^{\top}(\mathbf{A} -
\mathbf{P}) \bm{v}_i$
provides an exponential tail bound for each
$f \in \mathcal{F}_{\Phi}$. A chaining argument similar to that in
\citet[Section~3.2]{geer00:_empir_m} and bounds for the so-called {\em
  covering number} of $\mathcal{F}_{\Phi}$ (again, see
\citet[\S~2.3]{geer00:_empir_m} for a precise definition) lead to an
exponential tail bound that is uniform over all
$f \in \mathcal{F}_{\Phi}$.

The application of Lemma~\ref{lem:emp_proc} to our nonparametric
two-sample hypothesis testing problem is presented in
\S~\ref{sec:equality-case}. Another interesting consequence of Lemma~\ref{lem:emp_proc} is a functional central limit
theorem for $\hat{\mathbf{X}}$, which is the topic of the following
subsection. 

\subsection{A functional central limit theorem for $\hat{\mathbf{X}}$} 
\label{sec:funct-centr-limit}
By replacing the class of functions $\mathcal{F}_{\Phi}$ in
Lemma~\ref{lem:emp_proc} with a more general class of functions
$\mathcal{F}$ whose covering numbers are still ``small," a similar chaining
argument can be adapted to yield the following functional central
limit theorem. (For a comprehensive discussion of functional central
limit theorems, see, for example, \citet{dudley_unif,vaart96:_weak} and
the references therein.) We first recall certain definitions, which we
reproduce from \citet{vaart96:_weak}. Let $X_i, 1 \leq i \leq n$ be
identically distributed random variables on a measure space
$(\mathcal{X}, \mathcal{B})$, and let $\mathbb{P}_n$ be their
associated {\em empirical measure}; that is, $\mathbb{P}_n$ is the
discrete random measure defined, for any $E \in \mathcal{B}$, by
$$\mathbb{P}_n(E)=\frac{1}{n} \sum_{i=1}^n 1_{E}(X_i).$$
Let $P$ denote the common distribution of the random variables $X_i$,
and suppose that $\mathcal{F}$ is a class of measurable, real-valued
functions on $\mathcal{X}$.  The {\em $\mathcal{F}$-indexed empirical
  process} $\mathbb{G}_n$ is the stochastic process
\begin{equation*}
f \mapsto \mathbb{G}_n(f)=\sqrt{n}(\mathbb{P}_n -P)f=
\frac{1}{\sqrt{n}} \sum_{i=1}^n \Bigl(f(X_i)-
\mathbb{E}[f(X_i)]\Bigr). 
\end{equation*}
Under certain conditions, the empirical process $\{\mathbb{G}_n(f): f
\in \mathcal{F}\}$ can be viewed as a map into
$\ell^{\infty}(\mathcal{F})$, the collection of all uniformly bounded
real-valued functionals on $\mathcal{F}$.  In particular, let
$\mathcal{F}$ be a class of functions for which the empirical process
$\mathbb{G}_n=\sqrt{n}(\mathbb{P}_n-P)$ converges to a limiting
process $\mathbb{G}$ where $\mathbb{G}$ is a tight Borel-measurable
element of $\ell^{\infty}(\mathcal{F})$ (more specifically a Brownian
bridge). Then $\mathcal{F}$ is said to be a {\em $P$-Donsker class},
or for brevity, $P$-Donsker \citep[\S~2.1]{vaart96:_weak}. A sufficient condition, albeit a rather
strong one, for $\mathcal{F}$ to be $P$-Donsker is via the entropy for
the supremum norm. That is, let $N_{\infty}(\delta, \mathcal{F})$ be
the smallest value of $N$ such that there exists $\{f_j\}_{j=1}^{N}$
with $\sup_{f \in \mathcal{F}} \min_{j} \|f - f_j \|_{\infty} \leq
\delta$. Then $\mathcal{F}$ is $P$-Donsker for any $P$ if \citep[\S~2.5.2]{vaart96:_weak}
\begin{equation}
  \label{eq:9}
  \int_{0}^{\infty} \sqrt{\log{N_{\infty}(\delta, \mathcal{F})}} \,\,
    \mathrm{d} \delta < \infty.
\end{equation}
As an example, let $\mathcal{F}$ be the unit ball associated with a
kernel $\kappa$ on a compact $\Omega \subset \mathbb{R}^{d}$. Then
$\mathcal{F}$ is $P$-Donsker provided $\kappa$ is $m$-times continuously
differentiable on $\Omega$ for some $m \geq 2d + 1$ \citep[Theorem~2.7.1 \&
Theorem 2.5.6]{vaart96:_weak}. 
The unit ball associated with the Gaussian kernel on $\mathbb{R}^{d}$
is thus $P$-Donsker for all $d$.

% Let $(\Omega, \mathcal{B}(\Omega), P)$ be our probability triple
% (note here that $\Omega$ is assumed to be a metric space, and
% $\mathcal{B}(\Omega)$ the Borel $\sigma$-algebra), and let
% $\mathcal{F}$ be a class of functions in $L^2(\Omega, P)$, so
% $\mathcal{F} \subset \{f:\Omega \rightarrow \mathbb{R}:
% \int_{\Omega} f^2 dP < \infty\}$.  Let $W_k : k \in \mathbb{N}$ be a
% collection of i.i.d random variables, and define the empirical
% measure $P_n$ by
%$$P_n=\frac{1}{n} \sum_{k=1}^{n} \delta _{W_k}$$
%and the empirical process $\nu_n$ to be 
%$$\nu_n=n^{1/2} (P_n-P)$ 
\begin{theorem}
  \label{thm:u-statistics}
%  Let $(\mathbf{X},\mathbf{A})\sim \mathrm{RDPG}(F)$ be a finite-dimensional RDPG and
%  let $\hat{\mathbf{X}}$ be our estimate for $\mathbf{X}$. Let $\mathcal{G} \subset \{g
%  \colon \mathbb{R}^{d} \times \mathbb{R}^{d} \mapsto \mathbb{R}\}$ be
%  a class of radial basis functions. Suppose also that $\mathcal{G}$ is $F$-Donsker. Then the
%  $\mathcal{G}$-indexed empirical $U$-process $\mathbb{H}_n^{U}$
%  \begin{equation*}
%    g \in \mathcal{G} \mapsto \mathbb{H}_{n}^{U} g =
%    \frac{1}{n-1} \sum_{i < j} \bigl(g(\hat{X}_i, \hat{X}_j) - \mathbb{E}[g(X_i, X_j)]\bigr)
%  \end{equation*}
%  converges to a tight Borel measure $\mathbb{H}^{U}$ on
%  $\ell^{\infty}(\mathcal{G})$.  Further, suppose $\mathcal{F} \subset
%  \{f \colon \mathbb{R}^{d} \mapsto \mathbb{R} \}$ is 
%  $F$-Donsker. 
  Let $({\bf X}_n, {\bf A}_n)$ for $n = 1,2,\dots,$ be a sequence of $d$-dimensional
  $\mathrm{RDPG}(P)$ where the latent position distribution $P$
  satisfies the distinct eigenvalues condition in
  Assumption~\ref{ass:rank_F}. Let $\mathcal{F}$ be a collection of
  (at least) twice continuously differentiable functions on $\Omega$ with
  \begin{equation*}
    \sup_{f \in \mathcal{F}, X \in \Omega} \|(\partial f)(X)\| <
    \infty; \qquad \sup_{f \in \mathcal{F}, X \in \Omega}
    \| (\partial^{2} f)(X) \| < \infty.
  \end{equation*} 
  Furthermore, suppose 
  $\mathcal{F}$ satisfies Eq.~\eqref{eq:9} so that
  $\mathbb{G}_n=\sqrt{n}(\mathbb{P}_n-P)$ converges to $\mathbb{G}$, a
  $P$-Brownian bridge on $\ell^{\infty}(\mathcal{F})$. Denote by
  $\mathbf{W}_n$ the orthogonal matrices in the eigendecomposition
  $\mathbf{W}_n \mathbf{S}_n \mathbf{W}_n^{\top} = \mathbf{X}_n^{\top}
  \mathbf{X}_n$. Then as $n \rightarrow \infty$, the
  $\mathcal{F}$-indexed empirical process
  \begin{equation}
    f \in \mathcal{F} \mapsto \hat{\mathbb{G}}_{n} f =
    \frac{1}{\sqrt{n}}\sum_{i=1}^{n} \Bigl(f( \mathbf{W}_n \hat{X}_i) - \mathbb{E}[f(X_i)]\Bigr)
  \end{equation}
  also converges to $\mathbb{G}$ on $\ell^{\infty}(\mathcal{F})$.
\end{theorem}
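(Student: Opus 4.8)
The plan is to prove the theorem by an empirical-process Slutsky argument. I write $\hat{\mathbb{G}}_n$ as the sum of the ordinary empirical process $\mathbb{G}_n f = n^{-1/2}\sum_{i=1}^n (f(X_i) - \mathbb{E}[f(X_i)])$ and a ``plug-in error'' process
\[
D_n(f) := \hat{\mathbb{G}}_n f - \mathbb{G}_n f = \frac{1}{\sqrt{n}}\sum_{i=1}^n \bigl(f(\mathbf{W}_n\hat{X}_i) - f(X_i)\bigr),
\]
noting that the centering $\mathbb{E}[f(X_i)]$ cancels in the difference. By hypothesis $\mathcal{F}$ satisfies Eq.~\eqref{eq:9} and is thus $P$-Donsker, so $\mathbb{G}_n$ converges to $\mathbb{G}$ in $\ell^{\infty}(\mathcal{F})$. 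If I can show $\|D_n\|_{\mathcal{F}} := \sup_{f\in\mathcal{F}}|D_n(f)| \to 0$ almost surely (hence in outer probability), then the version of Slutsky's theorem for weak convergence in $\ell^{\infty}(\mathcal{F})$ (\citet{vaart96:_weak}) gives $\hat{\mathbb{G}}_n \Rightarrow \mathbb{G}$, which is the claim. Thus the entire content of the theorem reduces to the uniform negligibility of $D_n$, which is precisely the statement of Lemma~\ref{lem:emp_proc} with $\mathcal{F}_{\Phi}$ replaced by the more general class $\mathcal{F}$; the real work is therefore to verify that the proof of Lemma~\ref{lem:emp_proc} goes through under the hypotheses imposed here.

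To control $\|D_n\|_{\mathcal{F}}$ I would follow the argument sketched for Lemma~\ref{lem:emp_proc}. A second-order Taylor expansion of each summand about $X_i$ gives $D_n(f) = L_n(f) + R_n(f)$, where the linear term is $L_n(f) = n^{-1/2}\sum_i (\partial f)(X_i)^{\top}(\mathbf{W}_n\hat{X}_i - X_i)$ and $R_n(f)$ is the quadratic remainder. The remainder is handled uniformly and directly by Lemma~\ref{lem:2}: since $\sup_{f,X}\|(\partial^{2} f)(X)\| < \infty$ and $\sum_i \|\mathbf{W}_n\hat{X}_i - X_i\|^2 = \|\hat{\mathbf{X}} - \mathbf{X}\mathbf{W}_n\|_{F}^2 = O(1)$ with high probability, one gets $\sup_{f}|R_n(f)| \le C n^{-1/2}\|\hat{\mathbf{X}} - \mathbf{X}\mathbf{W}_n\|_{F}^2 = O(n^{-1/2})$ almost surely, which vanishes. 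The linear term is the crux: using the perturbation expansion of the adjacency spectral embedding one rewrites $\mathbf{W}_n\hat{X}_i - X_i$, up to lower-order terms, as a bilinear form in $\mathbf{A}-\mathbf{P}$, so that $L_n(f)$ becomes (up to negligible terms) a sum of terms $\lambda_j^{-1/2}\bm{v}_j(f)^{\top}(\mathbf{A}-\mathbf{P})\bm{u}_j$ with $\bm{u}_j$ depending on $\{X_i\}$ and $\bm{v}_j(f)$ depending on $\partial f$. For each fixed $f$, Hoeffding's inequality applied to this bounded bilinear form in the independent entries of $\mathbf{A}-\mathbf{P}$ yields a sub-Gaussian tail, and a chaining argument over $\mathcal{F}$ then upgrades this to a bound on $\sup_{f}|L_n(f)|$ that tends to $0$.

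The main obstacle is the chaining step for $L_n$ and, specifically, its compatibility with the entropy hypothesis. The increments $L_n(f) - L_n(g)$ are governed by $\partial f - \partial g$ through the direction vectors $\bm{v}_j(f) - \bm{v}_j(g)$, so the chaining requires control of the metric entropy of $\mathcal{F}$ in a gradient-dependent (Lipschitz-type) metric, whereas hypothesis \eqref{eq:9} is stated in the supremum norm $\|\cdot\|_{\infty}$. The two are reconciled using the uniform bounds $\sup_{f,X}\|(\partial f)(X)\|<\infty$ and $\sup_{f,X}\|(\partial^{2} f)(X)\|<\infty$: a Landau--Kolmogorov-type interpolation inequality bounds $\|\partial(f-g)\|_{\infty}$ in terms of $\|f-g\|_{\infty}$ and $\|\partial^{2}(f-g)\|_{\infty}$, so that a $\delta$-net of $\mathcal{F}$ in $\|\cdot\|_{\infty}$ is automatically a net of the requisite fineness for the gradients, and the Dudley integrability condition \eqref{eq:9} therefore controls the chaining entropy. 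The delicate bookkeeping --- tracking the dependence on $n$ through the eigenvalues $\lambda_j$ of $\mathbf{P}$, which under the dense model grow linearly in $n$ so that the factors $\lambda_j^{-1/2}$ supply the cancellation that sharpens the crude $1/n$-rate of \citet[Theorem~15]{lyzinski13:_perfec} to the $1/\sqrt{n}$-rate needed here, together with controlling the increments of the residual of the perturbation expansion --- is exactly the technical heart already carried out for Lemma~\ref{lem:emp_proc}. The present theorem then follows by running that same chaining argument with the covering-number bound for $\mathcal{F}_{\Phi}$ replaced by the general entropy bound \eqref{eq:9}.
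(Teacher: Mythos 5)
Your overall architecture coincides with the paper's: the paper offers no self-contained proof of Theorem~\ref{thm:u-statistics} beyond the remark preceding it, and that remark prescribes exactly your route --- write $\hat{\mathbb{G}}_n = \mathbb{G}_n + D_n$, use the Donsker property guaranteed by Eq.~\eqref{eq:9} to get $\mathbb{G}_n \Rightarrow \mathbb{G}$, and adapt the chaining proof of Lemma~\ref{lem:emp_proc} to show $\sup_{f \in \mathcal{F}} |D_n(f)| \rightarrow 0$, after which a Slutsky-type argument in $\ell^{\infty}(\mathcal{F})$ finishes. Your treatment of the quadratic remainder via the Frobenius bound of Lemma~\ref{lem:2} and of the linear term via Hoeffding plus chaining likewise mirrors the appendix proof of Lemma~\ref{lem:emp_proc}. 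So there is no divergence of method; the only question is whether your added detail bridging the two entropy conditions is sound.

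It is not, as stated, and this is a genuine gap. Interpolation with uniformly bounded second derivatives gives $\|\partial(f-g)\|_{\infty} \leq C\|f-g\|_{\infty}^{1/2}$, so a $\delta$-net of $\mathcal{F}$ in $\|\cdot\|_{\infty}$ is only a $C\sqrt{\delta}$-net for the gradient class; equivalently $N_{\infty}(\epsilon, \partial \mathcal{F}) \leq N_{\infty}(\epsilon^{2}/C^{2}, \mathcal{F})$, and the entropy integral the chaining actually needs becomes, after substituting $u = \epsilon^{2}/C^{2}$, an integral of the form $\int_{0}^{1} u^{-1/2} \sqrt{\log N_{\infty}(u, \mathcal{F})}\, \mathrm{d}u$. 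Eq.~\eqref{eq:9} does not make this finite. Concretely, for $\Omega \subset \mathbb{R}^{3}$ and $\mathcal{F}$ the class of all functions with derivatives up to order two uniformly bounded, $\log N_{\infty}(u, \mathcal{F}) \asymp u^{-3/2}$, so Eq.~\eqref{eq:9} holds, yet $\int_{0}^{1} u^{-1/2} \cdot u^{-3/4}\, \mathrm{d}u = \infty$; equivalently, $\partial \mathcal{F}$ is merely a Lipschitz class with entropy $\asymp u^{-3}$, whose Dudley integral diverges. Since the Hoeffding increments in the chaining are governed by $\|\partial f - \partial g\|_{\infty}$ --- a point you identify correctly --- this is the metric that matters, and the assertion that Eq.~\eqref{eq:9} ``therefore controls the chaining entropy'' is false. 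The gap is repairable without new hypotheses: each chaining link carries the factor $\lambda_{d}^{-1/2} = O(n^{-1/2})$, so the chaining sum need only be $o(\sqrt{n})$ rather than $O(1)$. Truncate the chain at a level $J_n \rightarrow \infty$, bound the remainder by Cauchy--Schwarz as $\delta_{J_n} \|(\mathbf{A} - \mathbf{P}) \mathbf{U}_{\mathbf{P}} \mathbf{S}_{\mathbf{P}}^{-1/2}\|_{F} = O(\delta_{J_n})$ with high probability (where $\delta_{J_n}$ is the gradient-class resolution at level $J_n$), and balance the two contributions; under Eq.~\eqref{eq:9} and monotonicity of the entropy this yields $\sup_{f}|D_n(f)| \rightarrow 0$. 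Alternatively, one can simply impose the entropy condition on $\partial \mathcal{F}$ directly, which is what the paper's phrase ``covering numbers are still small'' quietly requires.
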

Theorem~\ref{thm:u-statistics} is in essence a functional central
limit theorem for the estimated latent positions $\{\hat{X}_i\}$ in
the random dot product graph setting. We emphasize that for any $n$, the
$\{\hat{X}_i\}_{i=1}^{n}$ are not jointly independent random variables, i.e., 
Theorem~\ref{thm:u-statistics} is a functional central limit
theorem for {\em dependent} data. Due to the non-identifiability of random dot
product graphs, there is an explicit dependency on the sequence of
orthogonal matrices $\mathbf{W}_n$; note, however, that $\mathbf{W}_n$
depends solely on $\mathbf{X}_n$ and not on the $\{\hat{X}_i\}$.
\subsection{Consistent Testing}
\label{sec:equality-case}
We now consider testing the hypothesis $\mathbb{H}_0 \colon F
\upVdash G$ using the kernel-based framework of
\S~\ref{sec:kernel-based-two}. For our purpose, we shall assume
henceforth that $\kappa$ is a twice continuously-differentiable radial
kernel and that $\kappa$ is also universal. Examples of such kernels
are the Gaussian kernels and the inverse multiquadric kernels
$\kappa(x,y) = (c^{2} + \|x - y\|^2)^{-\beta}$ for $c,\beta > 0$. 

To justify this assumption on our kernel, we remark that in Theorem~\ref{thm:mmd_unbiased_ase} below, we show that the test
statistic $U_{n,m}(\hat{\mathbf{X}}, \hat{\mathbf{Y}})$ based on the
estimated latent positions converges to the corresponding statistic
$U_{n,m}(\mathbf{X}, \mathbf{Y})$ for the true but unknown latent
positions. Due to the non-identifiability of the random dot product
graph under unitary transformation, \emph{any} estimate of the latent
positions is close, only up to an appropriate orthogonal transformations, to
$\mathbf{X}$ and $\mathbf{Y}$. We have seen in \S~\ref{sec:2techlemma} that
for a radial kernel, this implies the approximations
$\kappa(\hat{X}_i, \hat{X}_j) \approx \kappa(X_i, X_j)$,
$\kappa(\hat{Y}_k,\hat{Y}_l) \approx \kappa(Y_k, Y_l)$ and the
convergence of $U_{n,m}(\hat{\mathbf{X}}, \hat{\mathbf{Y}})$ to
$U_{n,m}(\mathbf{X}, \mathbf{Y})$. If $\kappa$ is
not a radial kernel, the above approximations might not hold and
$U_{n,m}(\hat{\mathbf{X}}, \hat{\mathbf{Y}})$ need not converge to
$U_{n,m}(\mathbf{X}, \mathbf{Y})$. The assumption that
$\kappa$ is twice continuously-differentiable is for the technical
conditions of Lemma~\ref{lem:emp_proc}.  Finally, the
assumption that $\kappa$ is universal allows the test procedure to be
consistent against a large class of alternatives.

 \begin{theorem}
  \label{thm:mmd_unbiased_ase}
  Let $(\mathbf{X}, \mathbf{A}) \sim \mathrm{RDPG}(F)$ and
  $(\mathbf{Y}, \mathbf{B}) \sim \mathrm{RDPG}(G)$ be independent
  random dot product graphs with latent position distributions $F$ and
  $G$. Furthermore, suppose that both $F$ and $G$ satisfies the
  distinct eigenvalues condition in Assumption~\ref{ass:rank_F}.
  Consider the hypothesis test
  \begin{align*}
    H_{0} \colon F \upVdash G \quad 
    \text{against} \quad H_{A} \colon F  \nupVdash G.
  \end{align*}
  Denote by $\hat{\mathbf{X}} = \{\hat{X}_1, \dots, \hat{X}_n\}$
  and $\hat{\mathbf{Y}} = \{\hat{Y}_1, \dots, \hat{Y}_m\}$ the
  adjacency spectral embedding of $\mathbf{A}$ and
  $\mathbf{B}$, respectively.
  Let $\mathbf{W}_1$
  and $\mathbf{W}_{2}$
  be $d \times d$ orthogonal matrices in the eigendecomposition
  $\mathbf{W}_1 \mathbf{S}_1 \mathbf{W}_{1}^{\top} = \mathbf{X}^{\top}
  \mathbf{X}$, $\mathbf{W}_{2} \mathbf{S}_{2} \mathbf{W}_{2} =
  \mathbf{Y}^{\top} \mathbf{Y}$, respectively.  
Suppose that $m, n \rightarrow \infty$
  and $m/(m+n) \rightarrow \rho \in (0,1)$. Then under the null
  hypothesis of $F \upVdash G$, the sequence of matrices $\mathbf{W}_{n,m} = \mathbf{W}_{2} \mathbf{W}_{1}^{\top}$ satisfies
  \begin{equation}
    \label{eq:conv_mmdXhat_null}
    (m+n) (U_{n,m}(\hat{\mathbf{X}}, \hat{\mathbf{Y}}) -
    U_{n,m}(\mathbf{X}, \mathbf{Y} \mathbf{W}_{n,m})) \overset{\mathrm{a.s.}}{\longrightarrow} 0.
  \end{equation}
  % where $\mathbf{W}$ is an orthogonal matrix depending only on
  % $\mathbf{X}_n$ and $\mathbf{Y}_n$. 
  % \mathbf{W}$. Hence, under the null hypothesis of $F \upVdash G$,
  % \begin{equation}
  %   \label{eq:mmd-Xhat}
  %   (m+n) U_{n,m}(\hat{\mathbf{X}}, \hat{\mathbf{Y}}) 
  %   \overset{d}{\longrightarrow} \frac{1}{\rho(1 -
  %     \rho)} \sum_{l=1}^{\infty}
  %   \lambda_{l} (\chi^{2}_{1l} - 1)
  % \end{equation}
  % where $\{\chi^{2}_{1l}\}$ is a sequence of independent $\chi^{2}$
  % random variables with one degree of freedom and $\{\lambda_{l}\}$
  % are the eigenvalues of the integral operator $\mathcal{I}_{F,
  %   \tilde{\kappa}}$ defined in Eq.~\eqref{eq:integral_op}. 
  Under the alternative hypothesis of $F \nupVdash G$, the sequence of
  matrices ${\bf W}_{n,m} $ satisfies
 \begin{equation}
   \label{eq:conv_mmdXhat_alt}
   \frac{m+n}{\log^2{\!(m+n)}} %\min_{\mathbf{W} \in \mathcal{O}(d)} \,\, 
   (U_{n,m}(\hat{\mathbf{X}}, \hat{\mathbf{Y}}) - U_{n,m}(\mathbf{X},
   \mathbf{Y} \mathbf{W}_{n,m})) \overset{\mathrm{a.s.}}{\longrightarrow} 0.
  \end{equation}
%where $\mathcal{O}(d)$ is the set of orthogonal matrices on $\mathbb{R}^{d}$. 
\end{theorem}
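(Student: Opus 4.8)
The plan is to turn the comparison into a perturbation-of-points problem and then feed it into Lemma~\ref{lem:emp_proc}. Since $\kappa$ is radial it is invariant under a common orthogonal transformation of both arguments, so writing $\tilde{X}_i = \mathbf{W}_1 \hat{X}_i$, $\tilde{Y}_k = \mathbf{W}_2 \hat{Y}_k$ and $R = \mathbf{W}_1 \mathbf{W}_2^{\top}$, one checks term by term that $U_{n,m}(\hat{\mathbf{X}}, \hat{\mathbf{Y}})$ is the two-sample kernel statistic evaluated at the configuration $\{\tilde{X}_i\} \cup \{R\tilde{Y}_k\}$, whereas $U_{n,m}(\mathbf{X}, \mathbf{Y}\mathbf{W}_{n,m})$ is the same statistic at $\{X_i\} \cup \{RY_k\}$ (the rows of $\mathbf{Y}\mathbf{W}_{n,m}$ are exactly $RY_k$ since $\mathbf{W}_{n,m}^{\top} = R$). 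Thus the difference is precisely the effect of replacing $X_i \mapsto \tilde{X}_i$ and $RY_k \mapsto R\tilde{Y}_k$; by Lemma~\ref{lem:2} these displacements are $O(\sqrt{\log n/n})$ in $\|\cdot\|_{2\to\infty}$, while their aggregate effect on smooth test functions is exactly what Lemma~\ref{lem:emp_proc} controls. Throughout write $\psi_N(f) = N^{-1/2}\sum_{i=1}^N (f(\mathbf{W}_N \hat{X}_i) - f(X_i))$ and $\epsilon_N = \sup_{f \in \mathcal{F}_{\Phi}} |\psi_N(f)|$, so that Lemma~\ref{lem:emp_proc} gives $\epsilon_N \to 0$ almost surely.

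First I would dispatch the alternative \eqref{eq:conv_mmdXhat_alt}. Telescoping each of the three sums so that only one argument is perturbed at a time, the $\mathbf{X}$-diagonal contributes $\tfrac{1}{n(n-1)}\sum_{i\neq j}[\kappa(\tilde{X}_i,\tilde{X}_j)-\kappa(X_i,\tilde{X}_j)]$ plus the analogous piece with the two arguments swapped; for fixed $j$ the inner sum over $i$ equals $\sqrt{n}\,\psi_n(\kappa(\cdot,\tilde{X}_j))$ with $\kappa(\cdot,\tilde{X}_j) \in \mathcal{F}_{\Phi}$ (radiality keeps the argument inside $\mathcal{F}_{\Phi}$), and likewise for the cross and $\mathbf{Y}$-diagonal sums. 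Averaging the uniform bound over the free index gives $|U_{n,m}(\hat{\mathbf{X}},\hat{\mathbf{Y}}) - U_{n,m}(\mathbf{X},\mathbf{Y}\mathbf{W}_{n,m})| = O(\epsilon_{m+n}/\sqrt{m+n})$, with the diagonal omissions and the unbiased normalizations contributing only lower-order $O(1/(m+n))$ corrections. Since the proof of Lemma~\ref{lem:emp_proc} (a Hoeffding bound on $\sum_r \lambda_r^{-1/2}\bm{v}_r^{\top}(\mathbf{A}-\mathbf{P})\bm{u}_r$ made uniform by chaining over the $d$-dimensional index of $\mathcal{F}_{\Phi}$) yields the quantitative rate $\epsilon_N = O(\log N/\sqrt{N})$ almost surely, we obtain $|U_{n,m}(\hat{\mathbf{X}},\hat{\mathbf{Y}}) - U_{n,m}(\mathbf{X},\mathbf{Y}\mathbf{W}_{n,m})| = O(\log(m+n)/(m+n))$; multiplying by $(m+n)/\log^2(m+n)$ gives \eqref{eq:conv_mmdXhat_alt}. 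Note this bound is valid under either hypothesis, and it is simply a factor $\log^2(m+n)$ short of the full $(m+n)$ scaling of \eqref{eq:conv_mmdXhat_null}.

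The null case \eqref{eq:conv_mmdXhat_null} is the delicate one, and it is here that $F \upVdash G$ must be exploited. Under the null $F = G\circ\mathbf{W}_0$, and the Davis--Kahan argument of \S\ref{sec:2techlemma} gives $R = \mathbf{W}_1\mathbf{W}_2^{\top} = \mathbf{W}_0^{\top} + O(\max\{n^{-1/2},m^{-1/2}\})$, so $RY_k$ is asymptotically $F$-distributed, matching $X_i$. I would Taylor-expand each perturbed kernel value to first order in $\delta_i^X := \tilde{X}_i - X_i$ and $\delta_k^Y := \tilde{Y}_k - Y_k$; the first-order contribution in the $\delta_i^X$ is $\tfrac{2}{n}\sum_i b_i^{\top}\delta_i^X$ with $b_i = \tfrac{1}{n}\sum_j (\partial_1\kappa)(X_i,X_j) - \tfrac{1}{m}\sum_k (\partial_1\kappa)(X_i,RY_k)$, and under the null both empirical averages converge to $\mathbb{E}_{Z\sim F}(\partial_1\kappa)(X_i,Z)$, so $b_i \to 0$ uniformly (and symmetrically for the $\delta_k^Y$ coefficients). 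This cancellation of the leading coefficients is the source of the missing $\log^2(m+n)$.

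The main obstacle is to make this cancellation quantitative. Let $a = \tfrac{1}{n}\sum_i \Phi(X_i)$, $b = \tfrac{1}{m}\sum_k \Phi(RY_k)$ and $\tilde{a},\tilde{b}$ their perturbed analogues. One identifies $\tfrac{2}{n}\sum_i b_i^{\top}\delta_i^X = 2 n^{-1/2}\psi_n(\mu[F]-b)$ up to a quadratic remainder, where $\mu[F]-b \in \overline{\mathrm{conv}}(\mathcal{F}_{\Phi})$ is $O(m^{-1/2})$ in $\mathcal{H}$ under the null; the point is that $\psi_n$ must be applied to this small, degenerate direction rather than bounded by the crude uniform sup $\epsilon_n$. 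Using the kernel-mean-embedding identity $\langle \tilde{a}-a,\, \Phi(z)\rangle_{\mathcal{H}} = n^{-1/2}\psi_n(\kappa(\cdot,z))$, together with the null degeneracy of both $\mu[F]-b$ and $\Sigma := (\tilde{a}-\tilde{b})+(a-b)$ (each $O(n^{-1/2})$ in $\mathcal{H}$-norm, since both samples embed to $\mu[F]$) and the quadratic control $\tfrac{1}{n}\sum_i\|\delta_i^X\|^2 = O(\log n/n)$, should yield $U_{n,m}(\hat{\mathbf{X}},\hat{\mathbf{Y}}) - U_{n,m}(\mathbf{X},\mathbf{Y}\mathbf{W}_{n,m}) = o(1/(m+n))$ and hence \eqref{eq:conv_mmdXhat_null}. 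I expect the interaction term $\tfrac{1}{n}\sum_i b_i^{\top}\delta_i^X$, in which two dependent mean-zero fluctuations are paired, to be the crux: it is precisely where the full strength of Lemma~\ref{lem:emp_proc}, and not merely the first-order/Cauchy--Schwarz bounds, is indispensable.
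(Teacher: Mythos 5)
Your overall strategy is the same as the paper's: pass to the squared-MMD/polarization form, control the quadratic (Taylor remainder) terms with Lemma~\ref{lem:2}, control the first-order terms with Lemma~\ref{lem:emp_proc}, and under the null exploit the degeneracy that both embedded samples concentrate at $\mu[F]$ (via the Davis--Kahan alignment $\mathbf{W}_1 \approx \mathbf{T}_1$, $\mathbf{W}_2 \approx \mathbf{T}_2$ plus a Hilbert-space concentration bound). Your proof of the alternative case \eqref{eq:conv_mmdXhat_alt} is correct: the telescoping bound $|U_{n,m}(\hat{\mathbf{X}},\hat{\mathbf{Y}})-U_{n,m}(\mathbf{X},\mathbf{Y}\mathbf{W}_{n,m})| = O(\epsilon_{m+n}/\sqrt{m+n})$ together with the rate $\epsilon_N = O(\log N/\sqrt{N})$ read off from Eq.~\eqref{eq:supBnd} gives exactly the $\log$-factor shortfall that \eqref{eq:conv_mmdXhat_alt} tolerates, and this packaging is if anything more elementary than the paper's bound $\|\xi_W\|_{\mathcal{H}}\|\xi_W-\hat{\xi}\|_{\mathcal{H}} = O(\log^{3/2}n)$ (modulo the minor point that the index $\tilde{X}_j = \mathbf{W}_1\hat{X}_j$ of $\kappa(\cdot,\tilde{X}_j)$ need not lie in $\Omega$, fixed by enlarging $\Omega$ to a compact neighborhood before chaining).

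The gap is in the null case, at precisely the step you flag as the crux and then leave open. After polarization everything reduces to showing $(m+n)\langle \Delta, a-b\rangle_{\mathcal{H}} \to 0$, where $\Delta = (\tilde{a}-a)-(\tilde{b}-b)$, and the tools you restrict yourself to --- the scalar process $\psi_n$ uniformly over $\mathcal{F}_\Phi$, plus the identity $\langle \tilde{a}-a,\Phi(z)\rangle_{\mathcal{H}} = n^{-1/2}\psi_n(\kappa(\cdot,z))$ --- provably cannot deliver it. The process $\psi_n$ (more precisely its dominant linear part $\zeta$) is linear in $f$, and $a-b$ lies in twice the convex hull of $\pm\mathcal{F}_\Phi$, so the only bound the scalar supremum yields is $|\psi_n(a-b)| \leq 2\epsilon_n$; your first-order term is then $O(n^{-1/2}\epsilon_n) = O(\log n/n)$, which after multiplication by $(m+n)$ is $O(\log n)$, not $o(1)$ --- the smallness of $\|a-b\|_{\mathcal{H}}$ is invisible to any bound that factors through $\sup_{\mathcal{F}_\Phi}|\psi_n|$. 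What is actually needed is Cauchy--Schwarz, $|\langle\Delta,a-b\rangle_{\mathcal{H}}| \leq \|\Delta\|_{\mathcal{H}}\|a-b\|_{\mathcal{H}}$, together with the statement $\sqrt{m+n}\,\|\Delta\|_{\mathcal{H}} \to 0$, and that statement is strictly stronger than Lemma~\ref{lem:emp_proc} as written: the scalar lemma only gives $n\|\tilde{a}-a\|_{\mathcal{H}}^{2} \leq 2\sqrt{n}\,\sup_{f\in\mathcal{F}_\Phi}|\psi_n(f)| = O(\log n)$, i.e.\ $\sqrt{n}\|\tilde{a}-a\|_{\mathcal{H}} = O(\sqrt{\log n})$ rather than $o(1)$. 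Closing it requires rerunning the proof of Lemma~\ref{lem:emp_proc} for the $\mathcal{H}$-valued process $n^{-1/2}\sum_i[\Phi(\mathbf{W}_n\hat{X}_i)-\Phi(X_i)]$: Taylor-expand the feature map $\Phi$ itself and replace Hoeffding by a Hilbert-space concentration inequality of Pinelis type for the quadratic form in $\mathbf{A}-\mathbf{P}$ with $\mathcal{H}$-valued coefficients (equivalently, chain over the RKHS unit ball rather than over $\Omega$). This is exactly what the paper's objects $\xi_W,\hat{\xi}$ and the bound $\|\xi_W-\hat{\xi}\|_{\mathcal{H}}\to 0$ encode --- the paper's own phrase ``Lemma~\ref{lem:emp_proc} implies'' glosses over the same upgrade, but its appendix (the scaling case, where $\partial\Phi$ and $\partial^2\Phi$ appear as $\mathcal{H}$-valued derivatives) shows this is the intended argument. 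Without that $\mathcal{H}$-valued version, your null case \eqref{eq:conv_mmdXhat_null} does not close.
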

\begin{proof}
We first define the statistic $V_{n,m}(\mathbf{X}, \mathbf{Y})$
   \begin{equation}
     \label{eq:13}
    \begin{split}
    %V_{n,m}({\bf X}, {\bf Y})=%
V_{n,m}(\mathbf{X}, \mathbf{Y}) &= \Bigl\|
    \frac{1}{n} \sum_{i=1}^{n} \Phi(X_i) - \frac{1}{m} \sum_{k=1}^{m}
    \Phi(Y_k) \Bigr \|_{\mathcal{H}}^{2} \\ 
    &= 
    \frac{1}{n^2} \sum_{i=1}^{n} \sum_{j=1}^{n} \kappa(X_i,X_j)
    - \frac{2}{mn} \sum_{i=1}^{n} \sum_{k=1}^{m} \kappa(X_i, Y_k) +
    \frac{1}{m^2} \sum_{k=1}^{m} \sum_{l=1}^{m} \kappa(Y_k, Y_l).
    \end{split}
  \end{equation}
  We shall prove that the difference
  \begin{equation}
    \label{eq:16}
     (m+n)(V_{n,m}(\hat{\mathbf{X}}, \hat{\mathbf{Y}}) -
  V_{n,m}(\mathbf{X}, \mathbf{Y} \mathbf{W}_{n,m})) \overset{\mathrm{a.s}}{\longrightarrow} 0
  \end{equation}
  under the hypothesis $F
  \upVdash G$. The claim $(m+n) (U_{n,m}(\hat{\mathbf{X}},
  \hat{\mathbf{Y}}) - U_{n,m}(\mathbf{X}, \mathbf{Y}
  \mathbf{W}_{n,m})) \overset{\mathrm{a.s.}}{\longrightarrow}
  0$ in Theorem~\ref{thm:mmd_unbiased_ase} follows from
  Eq.~\eqref{eq:16} and the following expression
  \begin{equation*}
   (m+n)(V_{n,m}(\hat{\mathbf{X}}, \hat{\mathbf{Y}}) - V_{n,m}(\mathbf{X},
  \mathbf{Y} \mathbf{W}_{n,m})) = (m+n) (U_{n,m}(\hat{\mathbf{X}}, \hat{\mathbf{Y}}) -
  U_{n,m}(\mathbf{X}, \mathbf{Y} \mathbf{W}_{n,m})) + r_1 + r_2
  \end{equation*}
  where $r_1$ and $r_2$ are defined as (recall that $\kappa$ is a
  radial kernel)
   \begin{gather*}
     r_1 = \frac{m+n}{n(n-1)} \sum_{i=1}^{n} \Bigl(\kappa(X_i, X_i)
     - \kappa(\hat{X}_i, \hat{X}_i)\Bigr) 
     + \frac{m+n}{m(m-1)} \sum_{k=1}^{m} \Bigl(\kappa(Y_k, Y_k) -
     \kappa(\hat{Y}_k, \hat{Y}_k)\Bigr), \\
    r_2 = \frac{m+n}{n^2(n-1)} \sum_{i=1}^{n} \sum_{j=1}^{n}
     \Bigl(\kappa(X_i, X_j) - \kappa(\hat{X}_i, \hat{X}_j)\Bigr) + \frac{m+n}{m^2(m-1)}\sum_{k=1}^{m}
     \sum_{l=1}^{m} \Bigl(\kappa(Y_k, Y_l) - \kappa(\hat{Y}_k, \hat{Y}_l)\Bigr).
   \end{gather*}
   As $\kappa$ is twice continuously differentiable, we can show,
   by the compactness of $\Omega$ and the bounds in Lemma~\ref{lem:2} 
   that both $r_1$ and $r_2$ converges to
   $0$ almost surely. In particular, there exists a constant $L$ 
   such that both $|r_1|$ and $|r_2|$ is bounded from
   above by
    \begin{equation*}
        L(m+n) \biggl\{ \frac{\|
         \hat{\mathbf{X}} - \mathbf{X} \mathbf{W}_1 \|_{2 \rightarrow \infty}}{n-1} +
       \frac{\|\hat{\mathbf{Y}}  - \mathbf{Y} \mathbf{W}_2  \|_{2 \rightarrow
           \infty}}{m-1}\biggr\}.
   \end{equation*}

  We thus proceed to establishing Eq.~\eqref{eq:16}. Define $\xi_{W},
   \hat{\xi} \in \mathcal{H}$ by
  \begin{gather*}
    \xi_{W} = \frac{\sqrt{m+n}}{n} \sum_{i=1}^{n} \kappa(\mathbf{W}_1 X_i, \cdot) -
    \frac{\sqrt{m+n}}{m} \sum_{k=1}^{m} \kappa(\mathbf{W}_{2} Y_k, \cdot); \\
    \hat{\xi} = \frac{\sqrt{m+n}}{n} \sum_{i=1}^{n}
    \kappa( \hat{X}_i, \cdot) -
    \frac{\sqrt{m+n}}{m} \sum_{k=1}^{m} \kappa( \hat{Y}_k, \cdot).
  \end{gather*}
  Note that
  \begin{align*}
    \Bigl|(m+n)(V_{n,m}(\hat{\mathbf{X}}, \hat{\mathbf{Y}}) -
    V_{n,m}(\mathbf{X}, \mathbf{Y} \mathbf{W}_{n,m}))\Bigr| 
&= \Bigl|\|\xi_{W}\|_{\mathcal{H}}^{2} -
    \|\hat{\xi} \|_{\mathcal{H}}^{2}\Bigr|\\
& \leq \|\xi_{W} -
    \hat{\xi} \|_{\mathcal{H}}\Bigl(2 \|\xi_{W} \|_{\mathcal{H}} +
    |\xi_{W} - \hat{\xi} \|_{\mathcal{H}}\Bigr).
  \end{align*}
%  \marginal{Would be nice if we could use Lipschitz here for $\|\xi\|^2$}
  We now bound the terms $\|\xi_{W} - \hat{\xi} \|_{\mathcal{H}}$ and
  $\|\xi_{W}\|_{\mathcal{H}}$. We first bound 
  $\|\xi_{W}\|_{\mathcal{H}}$. Let $\mathbf{T}_1$ and $\mathbf{T}_2$
  be the orthogonal matrices in the eigendecomposition of
  $\mathbb{E}[X_1 X_1^{\top}]$ and $\mathbb{E}[Y_1 Y_1^{\top}]$. The distinct eigenvalues condition in
  Assumption~\ref{ass:rank_F} implies, by the Davis-Kahan theorem, that $\mathbf{W}_1 = \mathbf{T}_1 + O(n^{-1/2})$ and $\mathbf{W}_2 =
  \mathbf{T}_2 + O(m^{-1/2})$. When $F
  \upVdash G$, $F \circ \mathbf{T}_1 = G \circ \mathbf{T}_2$ and hence
  by adding and subtracting terms, we have
  \begin{equation*}
    \xi_{W} = \sqrt{\frac{m+n}{n}} \sum_{i=1}^{n}
    \frac{\kappa(\mathbf{T}_1 X_i,
      \cdot) -  \mu[F \circ \mathbf{T}_1]}{\sqrt{n}} - \sqrt{\frac{m+n}{m}}
    \sum_{k=1}^{m} \frac{\kappa(\mathbf{T}_2 Y_k, \cdot) -
      \mu[G \circ \mathbf{T}_2]}{\sqrt{m}} + O(1).
  \end{equation*}
  That is, $\xi_W -
  O(1)$ is a sum of independent mean zero random elements of
  $\mathcal{H}$. In addition $\|\kappa(Z,
  \cdot) - \mu[F]\|_{\mathcal{H}} \leq 2$ for any $Z \in
  \mathbb{R}^{d}$.  Using a Hilbert space concentration inequality
  \citep[Theorem~3.5]{pinelis94:_optim_banac}, we obtain that
  \begin{equation*}
    \mathbb{P}[\|\xi_{W} \|_{\mathcal{H}} \geq \sqrt{m+n}(s/\sqrt{n}
    + t/\sqrt{m})] \leq 2 \Bigl(\exp(-(1+m/n)s^2/8) + \exp(-(1+n/m)t^2/8)\Bigr),
  \end{equation*}
  which implies that $\|\xi_{W}\|_{\mathcal{H}}$ is bounded in probability. 
 We now bound $\|\xi_{W} - \hat{\xi} \|_{\mathcal{H}}$. We have
  \begin{equation*}
    \xi_{W} - \hat{\xi} = \sqrt{\frac{m+n}{n}} \sum_{i=1}^{n}
    \frac{\kappa(\mathbf{W}_1 X_i, \cdot) - \kappa( \hat{X}_i, \cdot)}{\sqrt{n}}
    - \sqrt{\frac{m+n}{n}} \sum_{k=1}^{m} \frac{\kappa(\mathbf{W}_2 Y_k,\cdot)
      - \kappa( \hat{Y}_k, \cdot)}{\sqrt{m}}
  \end{equation*}
and Lemma~\ref{lem:emp_proc} implies (as $\kappa$ is radial)
\begin{equation*}
  \sqrt{\frac{m+n}{n}} \sum_{i=1}^{n}
    \frac{\kappa( \mathbf{W}_1 X_i, \cdot) - \kappa(\hat{X}_i,
      \cdot)}{\sqrt{n}} \overset{\mathrm{a.s.}}{\longrightarrow} 0; \qquad 
    \sqrt{\frac{m+n}{n}} \sum_{k=1}^{m} \frac{\kappa(\mathbf{W}_2 Y_k,\cdot)
      - \kappa(\hat{Y}_k, \cdot)}{\sqrt{m}}
    \overset{\mathrm{a.s.}}{\longrightarrow} 0
\end{equation*}
as $m, n \rightarrow \infty$, $m/n \rightarrow \rho \in
(0,1)$. Thus $\|\xi_{W} - \hat{\xi}\|_{\mathcal{H}} \rightarrow 0$
and Eq.~\eqref{eq:16} and Eq.~\eqref{eq:conv_mmdXhat_null} are
established. 

We now derive Eq.~\eqref{eq:conv_mmdXhat_alt}. We note that in the
case when $F \nupVdash G$, one still has
\begin{equation*}
    \Bigl|(m+n)(V_{n,m}(\hat{\mathbf{X}}, \hat{\mathbf{Y}}) -
    V_{n,m}(\mathbf{X}, \mathbf{Y} \mathbf{W}_{n,m}))\Bigr| \leq \|\xi_{W} -
    \hat{\xi} \|_{\mathcal{H}}\Bigl(2 \|\xi_{W} \|_{\mathcal{H}} +
    \|\xi_{W} - \hat{\xi} \|_{\mathcal{H}}\Bigr)
\end{equation*}
where $\hat{\xi}$ and $\xi_W$ are defined identically to the case when
$F \upVdash G$. However, when $F \nupVdash G$, the bound $\|\xi_{W}\|_{\mathcal{H}} =
O(1)$ with high probability no longer holds. Indeed, when $F \nupVdash G$, 
\begin{equation*}
  \xi_W - O(1) =  \frac{\sqrt{m+n}}{n} \sum_{i=1}^{n} \kappa(\mathbf{T}_1 X_i, \cdot) -
    \frac{\sqrt{m+n}}{m} \sum_{k=1}^{m} \kappa(\mathbf{T}_{2} Y_k,
    \cdot)
\end{equation*}
is not a sum of mean $0$ random variables. We thus bound
$\|\xi_W\|_{\mathcal{H}} = O(\sqrt{n \log{n}})$ with high probability. 
The proof of Lemma~3 yields $\|\hat{\xi} - \xi_W \|_{\mathcal{H}} = O(n^{-1/2}
\log{n})$ with high probability (see Eq.~\eqref{eq:supBnd} in
the appendix). Hence $\Bigl|(m+n)(V_{n,m}(\hat{\mathbf{X}}, \hat{\mathbf{Y}}) -
    V_{n,m}(\mathbf{X}, \mathbf{Y} \mathbf{W}_{n,m}))\Bigr|$ is of
    order $\log^{3/2}{n}$ with high probability and
    Eq.~\eqref{eq:conv_mmdXhat_alt} follows.  
\end{proof}

Eq.\eqref{eq:conv_mmdXhat_null} and Eq.\eqref{eq:conv_mmdXhat_alt}
state that the test statistic $U_{n,m}(\hat{\mathbf{X}},
\hat{\mathbf{Y}})$ using the {\em estimated} latent positions is
almost identical to the statistic $U_{n,m}(\mathbf{X}, \mathbf{Y} \mathbf{W}_{n,m})$
defined in Eq.~\eqref{eq:10} using the true latent positions, under
both the null and alternative hypothesis. Because $\kappa$ is a universal
kernel, $U_{n,m}(\mathbf{X}, \mathbf{Y} \mathbf{W}_{n,m})$ converges
to $0$ under the null and converges to a positive number under the
alternative. The test statistic $U_{n,m}(\hat{\mathbf{X}},
\hat{\mathbf{Y}})$ therefore yields a test procedure that is
consistent against any alternative, provided that both $F$ and $G$
satisfy Assumption~\ref{ass:rank_F}, namely that the second moment
matrices have $d$ distinct eigenvalues.

\begin{figure}[htb!]
  \centering
  \subfloat[][$(\mathbf{X}, \mathbf{A})
\sim \mathrm{RDPG}(F)$, $(\mathbf{Y}, \mathbf{B})
\sim \mathrm{RDPG}(F)$]{\includegraphics[width=0.5\textwidth]{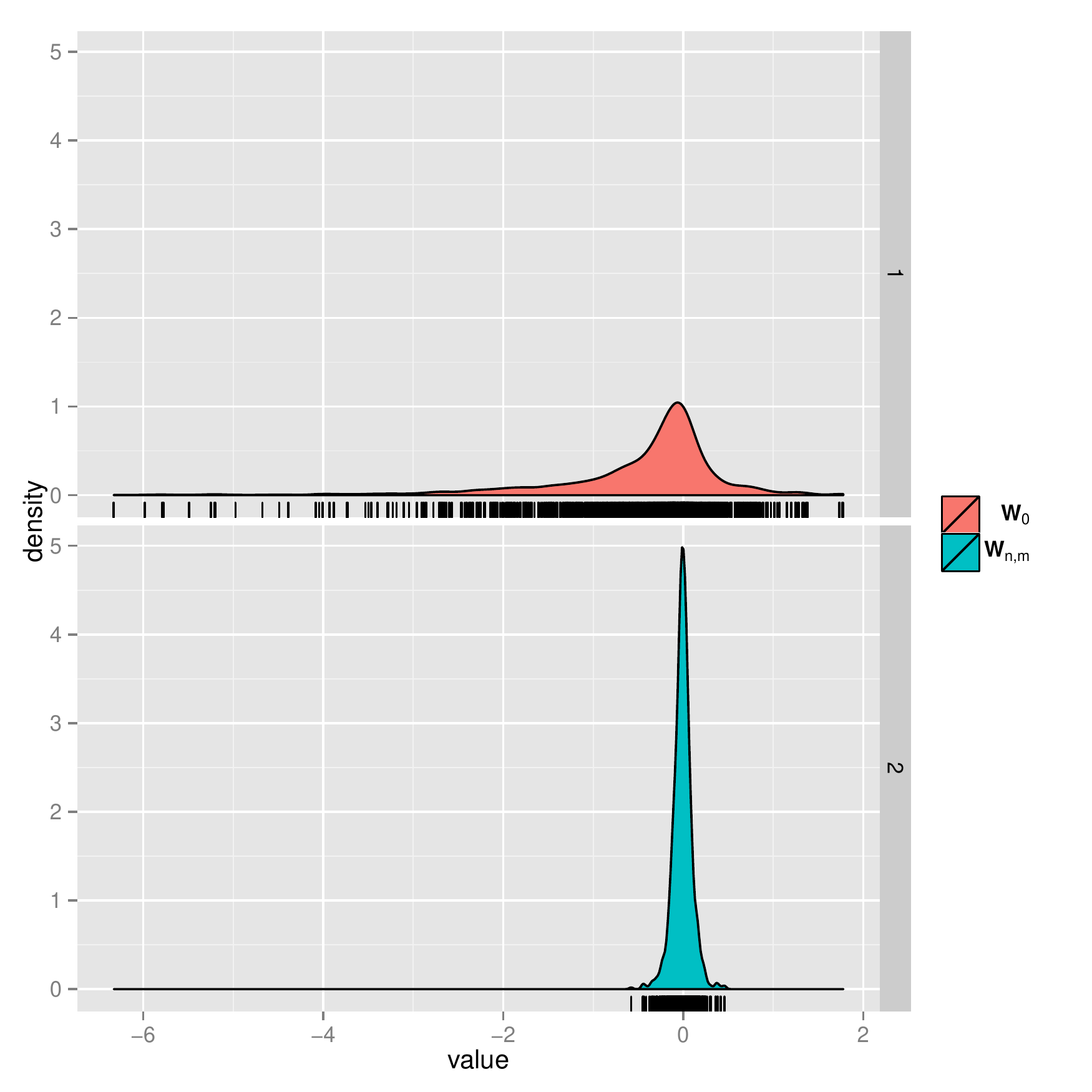}}
\hfill 
  \subfloat[][$(\mathbf{X}, \mathbf{A})
\sim \mathrm{RDPG}(F)$, $(\mathbf{Y}, \mathbf{B})
\sim \mathrm{RDPG}(G)$]{\includegraphics[width=0.5\textwidth]{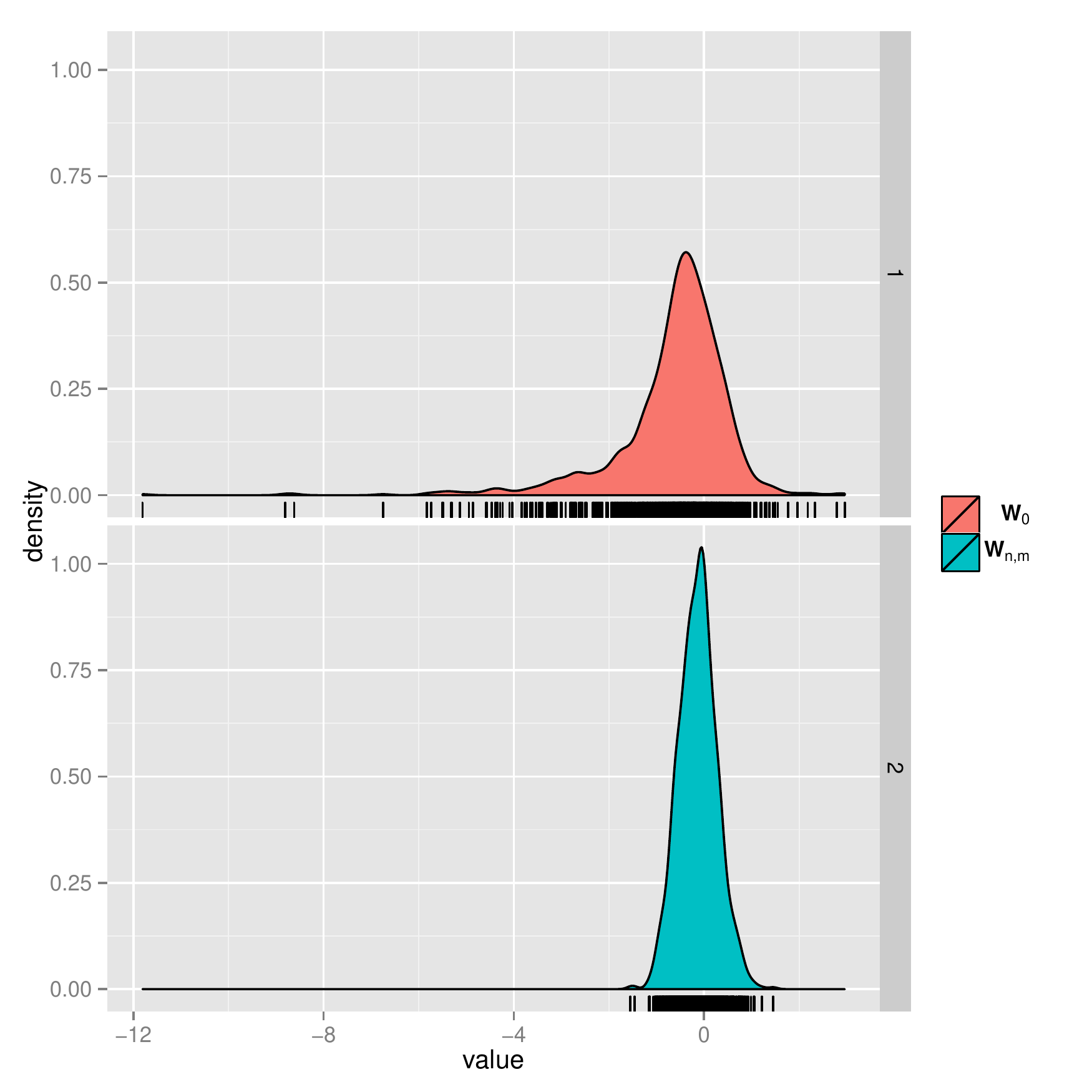}}
  \caption{Comparison between the random $\mathbf{W}_{n,m}$ and fixed
    but unknown $\mathbf{W}_0$. The empirical distributions of
$(m+n)(U_{n,m}(\hat{\mathbf{X}}, \hat{\mathbf{Y}}) -
U_{n,m}(\mathbf{X}, \mathbf{Y} \mathbf{W}_{0}))$ (in red) and
$(m+n)(U_{n,m}(\hat{\mathbf{X}}, \hat{\mathbf{Y}}) -
U_{n,m}(\mathbf{X}, \mathbf{Y} \mathbf{W}_{n,m})$ (in blue) under (a) the null setting of
$(\mathbf{X}, \mathbf{A}) \sim F, (\mathbf{Y}, \mathbf{B}) \sim F$ and (b) the
alternative setting of $(\mathbf{X}, \mathbf{A}) \sim F, (\mathbf{Y},
\mathbf{B}) \sim G$.}
  \label{fig:nullW}
\end{figure}

We note that a subtle point in
the statement and argument of the theorem is that $\mathbf{W}_{n,m}$
is a random quantity depending on $\mathbf{X}_n$ and
$\mathbf{Y}_m$. There does exist a deterministic matrix $\mathbf{W}_0$
depending only on $F$ and $G$ such that $\mathbf{W}_{n,m} \rightarrow
\mathbf{W}_0$ almost surely as $m,n \rightarrow \infty$. Indeed, from
the proof of the theorem, we have that $\mathbf{W}_1$ is a $\sqrt{n}$-consistent
  estimator of $\mathbf{T}_1$ where $\mathbf{T}_1$ is the orthogonal
  matrix in the eigendecomposition of $\mathbb{E}[X_1
  X_1^{\top}]$ and that $\mathbf{W}_2$ is a $\sqrt{m}$-consistent
  estimator of $\mathbf{T}_2$ where $\mathbf{T}_2$ is the orthogonal
  matrix in the eigendecomposition of $\mathbb{E}[Y_1
  Y_1^{\top}]$. Under the null hypothesis, $F \circ
  \mathbf{T}_1 = G \circ \mathbf{T}_2$; hence if we define $\mathbf{W}_0$ as
  $\mathbf{T}_2 \mathbf{T}_1^{\top}$, then $\mathbf{W}_2
  \mathbf{W}_1^{\top}$ is a $\sqrt{n}$-consistent estimator of
  $\mathbf{W}_0$. This
convergence of order $O(n^{-1/2})$ is, however, not
sufficiently fast to guarantee that $(m+n)(U_{n,m}(\hat{\mathbf{X}},
\hat{\mathbf{Y}}) - U_{n,m}(\mathbf{X}, \mathbf{Y} \mathbf{W}_{0}))$
converges to zero almost surely when $F \upVdash G$. 
For example, let $F$ be a mixture of
two multivariate logit-normal distributions with mean parameters
$(0,0), (4,4)$, identity covariance matrices and mixture components
$(0.4,0.6)$; let $G$ be a multivariate logit-normal distribution with
mean parameter $(2,2)$ and identity covariance matrix.
Figure~\ref{fig:nullW} % plots the empirical distributions of
% $(m+n)(U_{n,m}(\hat{\mathbf{X}}, \hat{\mathbf{Y}}) -
% U_{n,m}(\mathbf{X}, \mathbf{Y} \mathbf{W}_{n,m}))$ and
% $(m+n)(U_{n,m}(\hat{\mathbf{X}}, \hat{\mathbf{Y}}) -
% U_{n,m}(\mathbf{X}, \mathbf{Y} \mathbf{W}_0)$ under the null setting of
% $(\mathbf{X}, \mathbf{A}) \sim F, (\mathbf{Y}, \mathbf{B}) \sim F$ and
% alternative setting of $(\mathbf{X}, \mathbf{A}) \sim F, (\mathbf{Y},
% \mathbf{B}) \sim G$. We see that 
illustrates that the difference 
$(m+n)(U_{n,m}(\hat{\mathbf{X}}, \hat{\mathbf{Y}}) -
U_{n,m}(\mathbf{X}, \mathbf{Y} \mathbf{W}_{n,m}))$ is in general
smaller compared to the difference
$(m+n)(U_{n,m}(\hat{\mathbf{X}}, \hat{\mathbf{Y}}) -
U_{n,m}(\mathbf{X}, \mathbf{Y} \mathbf{W}_0))$, thereby complicating the
derivation of the exact nondegenerate limiting distribution for $(m+n)
U_{m,n}(\hat{{\bf X}}, \hat{{\bf Y}})$. Nevertheless, since the nondegenerate limiting distribution
for $(m+n) U_{m,n}(\hat{{\bf X}}, \hat{{\bf Y}})$ will not be
distribution-free, the fact that it is currently unknown 
is, for all practical purposes, irrelevant. Indeed, the proposed test statistic still yields a
consistent test procedure whose critical values can be obtained
through a simple bootstrapping procedure.

\begin{remark}
  The computational cost for implementing the test procedure in
  Theorem~\ref{thm:mmd_unbiased_ase} consist
  mainly of two parts, namely computing the adjacency spectral
  embedding of the graphs $\mathbf{A}$ and $\mathbf{B}$, and computing
  the test statistic $U_{n,m}(\hat{\mathbf{X}},
  \hat{\mathbf{Y}})$.
  Assuming $n \geq m$, the adjacency spectral embedding of
  $\mathbf{A}$ and $\mathbf{B}$ into $\mathbb{R}^{d}$ is a (partial)
  singular value decomposition of $\mathbf{A}$ and $\mathbf{B}$ and thus can be computed in
  $O(n^{2} d)$ time. The test statistic
  $U_{n,m}(\hat{\mathbf{X}}, \hat{\mathbf{Y}})$ can be evaluated in
  $O(n^{2})$ time.
\end{remark} 

\begin{remark}
  The proof of Theorem~\ref{thm:mmd_unbiased_ase} can be adapted
  to show that data-adaptive bandwidth selections behave similarly for
  $\hat{\mathbf{X}}$ and $\hat{\mathbf{Y}}$ as for $\mathbf{X}$ and
  $\mathbf{Y}$. That is to say, we can show that under the null
  hypothesis, $\Delta_{\theta} = (m+n) (U_{n,m}(\hat{\mathbf{X}},
  \hat{\mathbf{Y}}) - U_{n,m}(\mathbf{X},
  \mathbf{Y}\mathbf{W}_{n,m}))$ converges to $0$ uniformly over some
  family of kernels $\{\kappa_{\theta} \colon \theta \in
  \Theta\}$. For example, $\{\kappa_{\theta} \colon \theta \in
  \Theta\}$ could be the set of Gaussian kernels with bandwidth
  $\theta \in \Theta$ for some bounded set $\Theta \subset
  \mathbb{R}_{+}$. 
\end{remark}

\section{Experimental Results}
\label{sec:experimental-results}
In this section we illustrate our test statistic and procedure with two examples.
The first example investigates the comparison of distinct two-block stochastic blockmodels. 
The second example considers graphs from a protein network dataset and uses our proposed test statistic to build a classifier.

\subsection{Stochastic Blockmodel Example} \label{sec:sbm_example}
We illustrate the hypothesis tests through several simulated and real
data examples. For our first example, let $F_{\epsilon}$ for a given
$\epsilon > 0$ be mixture of point masses corresponding to
a two-block stochastic block model with block membership probabilities
$(0.4, 0.6)$ and block probabilities $\mathbf{B}_{\epsilon} =
\Bigl[ \begin{smallmatrix} 0.5 + \epsilon & 0.2 \\ 0.2 & 0.5 +
  \epsilon \end{smallmatrix}\Bigr]$. 
% \begin{figure*}[htbp]
%   \centering
%   \includegraphics[width=0.8\textwidth]{testi_nonparametric}
%   \caption{Distribution of test statistics under null and alternative
%     as computed from the latent positions and those estimated from adjacency
%     spectral embedding for testing the null hypothesis $F \upVdash
%     G$.}
%   \label{fig:identity_test1}
% \end{figure*}
%\marginal{Can we compute the eigenvalues explicitly for this model to get the dist of the test stat?\\ If we changed this to $(0.5,0.5)$  then this wouldn't work because of sign change?}
We then test, for a given $\epsilon > 0$, the hypothesis $H_0 \colon
F_{0} \upVdash F_{\epsilon}$ against the alternative $H_{A} \colon
F_{0} \nupVdash F_{\epsilon}$ using the kernel-based testing procedure
of \S~\ref{sec:main-results}. The kernel is chosen to be the Gaussian
kernel with bandwidth $\sigma = 0.5$. We first evaluate the
performance through simulation using $1000$ Monte Carlo replicates; in
each replicate we sample two graphs on $n$ vertices from
$\mathrm{RDPG}(F_0)$ and one graph on $n$ vertices from
$\mathrm{RPDG}(F_{\epsilon})$. We then perform an adjacency spectral
embedding on the graphs, in which we embed the graphs into
$\mathbb{R}^{2}$, and we proceed to compute the kernel-based test
statistic. % The embeddings of the graphs sampled according to $F_0$ are
% used to compute the empirical distribution of the test statistic under
% the null hypothesis. % The results for $\epsilon = 0.2$ and $n = 1000$
% % are presented in Fig.~\ref{fig:identity_test1}. 
% For the purposes of
% comparison, we also include the empirical distribution of the test
% statistics as computed using the true sampled positions. We note that
% the distributions of the test statistics computed from the embeddings
% $\hat{{\bf X}}$ and $\hat{\bf Y}$ well approximate those computed from
% the true sampled positions ${\bf X}$ and ${\bf Y}$ under both the null
% and alternative. 
We evaluate the performance of the test
procedures for both $U_{n,m}(\mathbf{X}, \mathbf{Y})$ and
$U_{n,m}(\hat{\mathbf{X}}, \hat{\mathbf{Y}})$ by estimating the 
  power of the test statistic for various choices of $n \in \{100,
  200, 500, 1000 \}$ and $\epsilon \in \{0.02, 0.05, 0.1\}$ through
  Monte Carlo simulation. The significance level is set to $\alpha =
  0.05$ and the rejection regions are specified via $B = 200$
  bootstrap permutation using either the true latent positions
  $\mathbf{X}$ and $\mathbf{Y}$ or the estimated latent positions $\hat{\mathbf{X}}$ and
  $\hat{\mathbf{Y}}$. These estimates are given in Table~\ref{tab:bootstrap_identity}. 
\begin{table*}[!htbp]
  \footnotesize
  \centering
\begin{tabular}{rrrrrrr}
 & \multicolumn{2}{c}{$\epsilon = 0.02$} &
 \multicolumn{2}{c}{$\epsilon = 0.05$} & \multicolumn{2}{c}{$\epsilon
   = 0.1$} \\ 
 $n$ & $\{\mathbf{X}, \mathbf{Y}\}$ & $\{\hat{\mathbf{X}},
 \hat{\mathbf{Y}}\}$ & $\{\mathbf{X}, \mathbf{Y}\}$ & $\{\hat{\mathbf{X}},
 \hat{\mathbf{Y}}\}$ & $\{\mathbf{X}, \mathbf{Y}\}$ &
 $\{\hat{\mathbf{X}}, \hat{\mathbf{Y}}\}$ \\ \midrule 
 $100$ & $0.07$ & $0.06$ & $0.07$ & $0.09$  & $0.21$ & $0.27$ \\
 $200$ & $0.06$ & $0.09$ & $0.11$ & $0.17$  & $0.89$ & $0.83$ \\
 $500$ & $0.08$ & $0.1$ & $0.37$ & $0.43$  & $1$ & $1$ \\
 $1000$ & $0.1$ & $0.14$ & $1$ & $1$ & $1$ & $1$
% \midrule
\end{tabular}
\caption{Power estimates for testing the null hypothesis $F \upVdash
  G$ at a significance level of $\alpha = 0.05$ using bootstrap
  permutation tests for the $U$-statistics
  $U_{n,m}(\hat{\mathbf{X}}, \hat{\mathbf{Y}})$ and
  $U_{n,m}(\mathbf{X}, \mathbf{Y})$. In each bootstrap test, $B =
  200$ bootstrap samples were generated. Each estimate of power is 
  based on $1000$ Monte Carlo replicates of the corresponding
  bootstrap test.
}
\label{tab:bootstrap_identity}
\end{table*}

\subsection{Classification of protein networks}
For our last example, we show how the statistics
$U_{n,m}(\hat{\mathbf{X}}, \hat{\mathbf{Y}})$ can also be adapted for
use in graphs classification. More concretely, we consider the problem
of classifying proteins network into enzyme versus non-enzymes. We use
the dataset of \citet{dobson03:_distin}, which consists of $1178$
protein networks labeled as enzymes ($691$ networks) and non-enzymes
($487$ networks). For our classification procedure, we first embed
each of the protein networks into $\mathbb{R}^{5}$ using adjacency
spectral embedding. The choice of $d = 5$ is chosen from among the
choices of embedding dimensions ranging from $d = 2$ through $d = 15$
to minimize the classification error rate. We then compute a
$1178 \times 1178$ matrix $\mathbf{S}$ of pairwise dissimilarity between
the adjacency spectral embedding of the protein networks using a
Gaussian kernel with bandwidth $h = 1$. The classifier is a $k$-NN
classifier using the dissimilarities in $\mathbf{S}$ in place of the
Euclidean distance. We evaluate the classification accuracy using a
$10$-fold cross validation. The results are presented in
Table~\ref{tab:enzyme}. For the purpose of comparison, we also include
the accuracy of several other classifiers that were previously applied
on this data set, see
\citet{dobson03:_distin,borgwardt05:_protein}. The results of
\citet{dobson03:_distin} are based on modeling the proteins using
various features such as secondary-structure content, surface
properties, ligands, and amino acid propensities, and then training a
SVM using a radial basis kernel on these feature vectors. The results
of \citet{borgwardt05:_protein} are based on representing the proteins
as graphs, using their secondary-structure content, and then training
a SVM classifier using a random walk kernel on the result graphs.  The
accuracy of our straightforward classifier, which does not use any
information about associated secondary structure, is comparable to
that obtained from using SVM with a well-designed features kernel or
well-designed graph kernels.
\begin{table}[htbp]
  \centering
  \begin{tabular}{ccc}
    Classifier & Accuracy ($\%$) \\ \midrule
%    SVM with feature vector kernel \citep{dobson03:_distin} & 76.86 \\
    SVM with optimized feature vector kernel \citep{dobson03:_distin} & 80.17 \\
    SVM with random walk kernel with secondary structure \citep{borgwardt05:_protein} & 77.30 \\
%    SVM with random walk kernel without secondary structure
%   \citep{borgwardt05:_protein}  & 72.33 \\
    % SVM with random walk kernel with global info 
    % \cite{borgwardt05:_protein} & 84.04 & 3.33 \\
    $k$-NN with dissimilarities based on $U_{n,m}$ & 78.20 
  \end{tabular}
  \caption{Classification accuracy on the enzyme dataset. }
  \label{tab:enzyme}
\end{table}

\section{Extensions}
\label{sec:extensions}
In this section we will consider extensions to alternative hypothesis
tests that consider looser notions of equality between the two
distributions.  These notions may be quite useful in practice due to
variations in graph properties that one may want to ignore in a
comparison of the graphs.  We do not formally state results for these
extensions but we note that they can be derived in a similar manner to 
Theorem~\ref{thm:mmd_unbiased_ase}; see 
\S~\ref{sec:scaling_proof} and \S~\ref{sec:projection_proof} in
the appendix.

\subsection{Scaling case}
\label{sec:scaling-case}
We now consider the case of testing the hypothesis that the
distributions $F$ and $G$ are equal up to scaling. 
In particular the test 
  \begin{align*}
\quad H_{0} \colon F \upVdash G \circ c \quad \text{for
    some $c > 0$}
  \quad \text{against} \quad H_{A} \colon F  \nupVdash G \circ c \quad
  \text{for any  $c > 0$},
  \end{align*}
  where $Y \sim F \circ c$ if $cY \sim F$.
The test statistic is now
a simple modification of the one in
Theorem~\ref{thm:mmd_unbiased_ase}, i.e., we first scale the adjacency
spectral embeddings by the norm of the empirical means before
computing the kernel test statistic. 
In particular if we let 
\begin{align*}
  \hat{s}_{X}=n^{-1/2}\|\hat{\mathbf{X}}\|_{F}, \quad \hat{s}_{Y}=m^{-1/2}
    \|\hat{\mathbf{Y}}\|_{F}, \quad 
      s_{X}= n^{-1/2} \|\mathbf{X}\|_{F},\quad s_{Y} =
    m^{-1/2} \|\mathbf{Y}\|_{F},
  \end{align*}
  then the conclusions of Theorem~\ref{thm:mmd_unbiased_ase} hold
  where we use
  $U_{n,m}(\hat{\mathbf{X}}/\hat{s}_{X},\hat{\mathbf{Y}}/\hat{s}_{Y})$
  as the test statistic in comparison to
  $U_{n,m}(\mathbf{X}/s_{X},\mathbf{Y} \mathbf{W}_{n,m}/s_{Y})$.  Note
  that we must restrict $c$ so that $G\circ c$ is still a valid
  distribution for an RDPG.

As an example let $F_{\epsilon}$ be the uniform distribution
on $[\epsilon, 1/\sqrt{2}]^{2}$ where $\epsilon \geq 0$ and
let $G$ be the uniform distribution on $[0,1/\sqrt{3}]^{2}$.  
For a given $\epsilon$, we test
the hypothesis $H_0 \colon F_{\epsilon} \upVdash G \circ c$ for some constant $c
> 0$ against the alternative $H_{A} \colon F_{\epsilon} \nupVdash G \circ c$ for
any constant $c > 0$. The testing procedure is based on the test
statistic $(m+n)
U_{n,m}(\hat{\mathbf{X}}/\hat{s}_X, \hat{\mathbf{Y}}/\hat{s}_Y)$ using
a Gaussian kernel with bandwidth $\sigma = 0.5$. 
%Once again, we
%evaluate the performance through numerical simulation using $1000$
%Monte Carlo replicates. % The results are presented in
% Fig.~\ref{fig:scale_test1}. We also include the empirical distribution
% of the test statistics computed using the true parameter $\sigma_X =
% \mathbb{E}[\|X\|^{2}]^{1/2}$. Once again, the distributions of the test statistics
% computed from the embeddings well approximate those computed from the
% true latent positions. 
Table~\ref{tab:bootstrap_scaling} is the
analogue of Table~\ref{tab:bootstrap_identity} and presents estimates
of the size and power for $U_{n,m}(\mathbf{X}/s_{X}, \mathbf{Y}/s_Y)$ and
$U_{n,m}(\hat{\mathbf{X}}/\hat{s}_X, \hat{\mathbf{Y}}/\hat{s}_Y)$ for various choices of
 $n$ and $\epsilon$. 
% \begin{figure*}[htbp]
%   \centering
%   \includegraphics[width=0.8\textwidth]{tests_nonparametric}
%   \caption{Distribution of test statistics under null and alternative
%     as computed from the latent positions and those estimated from adjacency
%     spectral embedding for testing the null hypothesis $F_{0.2} \upVdash G \circ
%     c$ for some constant $c > 0$.}
%   \label{fig:scale_test1}
% \end{figure*}

\begin{table*}[htbp]
  \footnotesize
  \centering
\begin{tabular}{rrrrrrrrr}
 & \multicolumn{2}{c}{$\epsilon = 0$} & \multicolumn{2}{c}{$\epsilon = 0.05$} &
 \multicolumn{2}{c}{$\epsilon = 0.1$} & \multicolumn{2}{c}{$\epsilon
   = 0.2$} \\ 
 $n$ & $\{\mathbf{X}, \mathbf{Y}\}$ & $\{\hat{\mathbf{X}},
 \hat{\mathbf{Y}}\}$ & $\{\mathbf{X}, \mathbf{Y}\}$ & $\{\hat{\mathbf{X}},
 \hat{\mathbf{Y}}\}$ & $\{\mathbf{X}, \mathbf{Y}\}$ & $\{\hat{\mathbf{X}},
 \hat{\mathbf{Y}}\}$ & $\{\mathbf{X}, \mathbf{Y}\}$ &
 $\{\hat{\mathbf{X}}, \hat{\mathbf{Y}}\}$ \\ \midrule 
 $100$ & $0.05$ & $0.04$ & $0.184$ & $0.02$  & $0.79$ & $0.16$ &1
 & 0.91\\
 $200$ & $0.06$ & $0.1$ & $0.39$ & $0.11$  & $0.98$ & $0.7$ & 1 & 1 \\
 $500$ & $0.07$ & $0.07$ & $0.83$ & $0.66$  & $1$ & $1$ & 1 & 1 \\
 $1000$ & $0.06$ & $0.03$ & $1$ & $0.98$ & $1$ & $1$ & 1 & 1
% \midrule
\end{tabular}
\caption{Power estimates for testing the null hypothesis $F \upVdash G
  \circ c$ at a significance level of $\alpha = 0.05$ using bootstrap
  permutation tests for the $U$-statistics
  $U_{n,m}(\hat{\mathbf{X}}/\hat{s}_X, \hat{\mathbf{Y}}/\hat{s}_Y)$
  and $U_{n,m}(\mathbf{X}/s_X, \mathbf{Y}/s_Y)$. In each bootstrap
  test, $B = 200$ bootstrap samples were generated. Each estimate of
  power is based on $1000$ Monte Carlo replicates of the corresponding
  bootstrap test. The entries for $\epsilon = 0$ coincides with bootstrap
  estimate for the size of the test. }
\label{tab:bootstrap_scaling}
\end{table*}

\subsection{Projection case}
\label{sec:projection-case}
We next consider the case of testing 
\begin{align*}
\quad H_{0} \colon F \circ \pi^{-1} \upVdash G
\circ \pi^{-1} \quad \text{against} \quad H_{A} \colon F \circ \pi^{-1} \nupVdash G \circ \pi^{-1} ,
  \end{align*}
  where $\pi$ is the
  projection $x \mapsto x/\|x\|$ that maps $x$ onto the unit sphere
  in $\mathbb{R}^{d}$. In an abuse of
  notation we will also write $\pi(\mathbf{X})$ to denote the row-wise
  projection of the rows of $\mathbf{X}$ onto the unit sphere. 
  
We shall assume
that $0$ is not an atom of either $F$ or $G$, i.e., $F(0) = G(0) = 0$,
for otherwise the problem is possibly ill-posed: specifically, $\pi(0)$ is
undefined. In addition, for simplicity in the proof, we shall also assume that
the support of $F$ and $G$ is bounded away from $0$, i.e., there
exists some $\epsilon > 0$ such that $F(\{x \colon \|x\| \leq
\epsilon\}) = G(\{x \colon \|x\| \leq \epsilon\}) = 0$. 
A truncation argument with $\epsilon \rightarrow 0$ allows us to handle the general
case of distributions on $\Omega$ where $0$ is not an atom.

To contextualize the test of equality up to projection,
consider the very specific case of the degree-corrected stochastic
blockmodel \citep{karrer2011stochastic}. A degree-corrected stochastic
blockmodel can be view as a random dot product graph whose latent
position $X_v$ for an arbitrary vertex $v$ is of the form $X_v =
\theta_v \nu_{v}$ where $\nu_v$ is sampled from a mixture of point
masses and $\theta_v$ (the degree-correction factor) is sampled from a distribution on
$(0,1]$. Thus, given two degree-corrected stochastic blockmodel
graphs, equality up to projection tests whether the
underlying mixture of point masses (that is, the distribution of the
$\nu_v$) are the same modulo the distribution of the degree-correction
factors $\theta_v$.  

For this test, under the assumption that both $F$ and $G$ have supports
bounded away from the origin, the conclusions of
Theorem~\ref{thm:mmd_unbiased_ase} hold where we use
$U_{n,m}(\pi(\hat{\mathbf{X}}), \pi(\hat{\mathbf{Y}}))$ as the test
statistic and compare it to $U_{n,m}(\pi(\mathbf{X}),\pi(\mathbf{Y})
\mathbf{W}_{n,m})$.

\subsection{Local alternatives and sparsity}
  We now consider the test procedure of
  Theorem~\ref{thm:mmd_unbiased_ase} in the context of (1) local
  alternatives and (2) sparsity.  It is not hard to show that the test
  statistic $U_{n,m}(\hat{\mathbf{X}}, \hat{\mathbf{Y}})$ is also
  consistent against local alternatives, in particular the setting
  $(\mathbf{X}_n, \mathbf{A}_n) \sim \mathrm{RDPG}(F_n)$,
  $(\mathbf{Y}_n, \mathbf{B}_n) \sim \mathrm{RDPG}(G_n)$ with
  $\|\mu[F_n] - \mu[G_n]\|_{\mathcal{H}} \rightarrow 0$. In this
  setting, the accuracy of $\hat{\mathbf{X}}_n$ and $\hat{\mathbf{Y}}_n$
  as estimates for $\mathbf{X}_n$ and $\mathbf{Y}_n$ is unchanged; the
  only difference is that the distance between $F_n$ and $G_n$ is
  shrinking. Thus Eq.~\eqref{eq:conv_mmdXhat_null} and
  Eq.~\eqref{eq:conv_mmdXhat_alt} continue to hold and the test
  procedure is consistent against all local alternatives for which
  $\|\mu[F] - \mu[G] \|_{\mathcal{H}} = \omega(n^{-1/2} \log^{K}(n))$
  for some integer $K \geq 2$
  (c.f. \citet[Theorem~13]{gretton12:_kernel_two_sampl_test}). 

  Another related setting is that of sparsity, in which
  $(\mathbf{X}_n, \mathbf{A}_n) \sim \mathrm{RDPG}(\alpha_n^{1/2} F)$,
  $(\mathbf{Y}_n, \mathbf{B}_n) \sim \mathrm{RDPG}(\alpha_n^{1/2} G)$, with
  $F$ and $G$ being fixed distributions but the sparsity factor
  $\alpha_n \rightarrow 0$. That is to say, $(\mathbf{X}_n,
  \mathbf{A}_n) \sim \mathrm{RDPG}(\alpha_n^{1/2} F)$ for $\alpha_n \leq
  1$ if the rows of
  $\mathbf{X}_n$ are sampled i.i.d from $F$ and, conditioned on
  $\mathbf{X}_n$, $\mathbf{A}_n$ is a random $n \times n$ adjacency
  matrix with probability 
  \begin{equation*} \mathbb{P}[\mathbf{A} | \{X_i\}_{i=1}^{n}] =
    \prod_{i \leq j} (\alpha_n X_i^{\top} X_j)^{\mathbf{A}_{ij}} (1 -
    \alpha_n X_i^{\top}
    X_j)^{1 - \mathbf{A}_{ij}}. 
  \end{equation*} 
  Now the accuracy of $\hat{\mathbf{X}}_n$ and
  $\hat{\mathbf{Y}}_n$ as estimates for $\mathbf{X}_n$ and $\mathbf{Y}_n$
  decreases with $\alpha_n$ due to increasing sparsity. More
  specifically, if $\hat{\mathbf{X}}_n$ denotes the adjacency spectral
  embedding of $\mathbf{A}_n$ where $(\mathbf{X}_n, \mathbf{A}_n) \sim
  \mathrm{RDPG}(\alpha_n F)$, then Lemma~\ref{lem:2} can be extended
  to yield that, with probability at least $1 - 4 \eta$, there exists
  an orthogonal matrix $\mathbf{W}_n$ such that
  \begin{equation}
    \label{eq:14}
   \| \alpha_n^{-1/2} \hat{\mathbf{X}}_n - \mathbf{X}_n \mathbf{W}_n \|_{F} \leq
   \alpha_n^{-1/2} C_1
  \end{equation}
  for some constant $C_1$. 
  We note that there are $n$ rows in
  $\hat{\mathbf{X}}_n$ and hence, on average, we have that for each
  index $i$, $\|\alpha_n^{-1/2} \hat{X}_i - \mathbf{W}_n X_i\| \leq (n
  \alpha_n)^{-1/2} C_1$ with high probability. Thus, if $n \alpha_n
  \rightarrow \infty$, we have that, on average, each 
  $\alpha_n^{-1/2} \hat{X}_i$ is a consistent estimate of the
  corresponding $X_i$. Thus we should expect that there exists some sequence of orthogonal matrices $\mathbf{V}_n$ such that  
  $|U_{n,m}(\alpha_n^{-1/2} \hat{\mathbf{X}}_n, \alpha_n^{-1/2} \hat{\mathbf{Y}}_n) -
  U_{n,m}(\mathbf{X}_n, \mathbf{Y}_n \mathbf{V}_{n})| \rightarrow 0$
  as $n \rightarrow \infty$. More formally, we have the following.
  
 \begin{proposition}
  \label{prop:mmd_unbiased_sparse}
  Let $(\mathbf{X}_n, \mathbf{A}_n) \sim \mathrm{RDPG}(\alpha_n^{1/2} F)$ and
  $(\mathbf{Y}_m, \mathbf{B}_m) \sim \mathrm{RDPG}(\beta_m^{1/2} G)$ be independent
  random dot product graphs with latent position distributions $F$ and
  $G$ and sparsity factor $\alpha_n$ and $\beta_m$, respectively. 
  Furthermore, suppose that both $F$ and $G$ satisfies the
  distinct eigenvalues condition in Assumption~\ref{ass:rank_F} and
  that $\alpha_n$ and $\beta_m$ are known. 
  Consider the hypothesis test
  \begin{align*}
    H_{0} \colon F \upVdash G \quad 
    \text{against} \quad H_{A} \colon F  \nupVdash G.
  \end{align*}
  Denote by $\hat{\mathbf{X}}_n = \{\hat{X}_1, \dots, \hat{X}_n\}$
  and $\hat{\mathbf{Y}}_m = \{\hat{Y}_1, \dots, \hat{Y}_m\}$ the
  adjacency spectral embedding of $\mathbf{A}_n$ and
  $\mathbf{B}_m$, respectively.
  Let $\mathbf{W}_1$
  and $\mathbf{W}_{2}$
  be $d \times d$ orthogonal matrices in the eigendecomposition
  $\mathbf{W}_1 \mathbf{S}_1 \mathbf{W}_{1}^{\top} = \mathbf{X}_n^{\top}
  \mathbf{X}_n$, $\mathbf{W}_{2} \mathbf{S}_{2} \mathbf{W}_{2} =
  \mathbf{Y}_m^{\top} \mathbf{Y}_m$, respectively.  
Suppose that $m, n \rightarrow \infty$, $\tfrac{m}{m+n} \rightarrow \rho \in (0,1)$ and furthermore that $n \alpha_n
  = \omega(\log^{4}{n})$ and $m \beta_m = \omega(\log^{4} m)$. Then 
  the sequence of matrices $\mathbf{W}_{n,m} = \mathbf{W}_{2} \mathbf{W}_{1}^{\top}$ satisfies
  \begin{equation}
    \label{eq:conv_mmdXhat_null_sparse}
    U_{n,m}(\alpha_n^{-1/2} \hat{\mathbf{X}}_n, \beta_m^{-1/2} \hat{\mathbf{Y}}_m) -
    U_{n,m}(\mathbf{X}_n, \mathbf{Y}_m \mathbf{W}_{n,m}) \overset{\mathrm{a.s.}}{\longrightarrow} 0.
  \end{equation}
  % where $\mathbf{W}$ is an orthogonal matrix depending only on
  % $\mathbf{X}_n$ and $\mathbf{Y}_n$. 
  % \mathbf{W}$. Hence, under the null hypothesis of $F \upVdash G$,
  % \begin{equation}
  %   \label{eq:mmd-Xhat}
  %   (m+n) U_{n,m}(\hat{\mathbf{X}}, \hat{\mathbf{Y}}) 
  %   \overset{d}{\longrightarrow} \frac{1}{\rho(1 -
  %     \rho)} \sum_{l=1}^{\infty}
  %   \lambda_{l} (\chi^{2}_{1l} - 1)
  % \end{equation}
  % where $\{\chi^{2}_{1l}\}$ is a sequence of independent $\chi^{2}$
  % random variables with one degree of freedom and $\{\lambda_{l}\}$
  % are the eigenvalues of the integral operator $\mathcal{I}_{F,
  %   \tilde{\kappa}}$ defined in Eq.~\eqref{eq:integral_op}. 
%where $\mathcal{O}(d)$ is the set of orthogonal matrices on $\mathbb{R}^{d}$. 
\end{proposition}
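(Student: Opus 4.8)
The plan is to treat Proposition~\ref{prop:mmd_unbiased_sparse} as the sparse-regime analogue of the \emph{unscaled}, first-order convergence sketched in \S~\ref{sec:2techlemma}, rather than of the sharper $(m+n)$-scaled statement in Theorem~\ref{thm:mmd_unbiased_ase}. Since the limit in \eqref{eq:conv_mmdXhat_null_sparse} carries no $(m+n)$ factor, the empirical-process machinery of Lemma~\ref{lem:emp_proc} is not needed; it suffices to (i) upgrade the row-wise bound \eqref{eq:7} of Lemma~\ref{lem:2} to the sparse setting for the rescaled embeddings $\alpha_n^{-1/2}\hat{\mathbf{X}}_n$ and $\beta_m^{-1/2}\hat{\mathbf{Y}}_m$, and then (ii) reuse the Lipschitz-plus-radiality argument of \S~\ref{sec:2techlemma} verbatim. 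Here $\mathbf{X}_n$ denotes the unscaled latent positions (rows i.i.d.\ $F$), which are estimated up to rotation by $\alpha_n^{-1/2}\hat{\mathbf{X}}_n$ because $\mathbf{A}_n$ is generated from the scaled positions $\alpha_n^{1/2}\mathbf{X}_n$.

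First I would establish a sparse $2\to\infty$ bound: with $\mathbf{W}_1$ the eigenvector matrix of $\mathbf{X}_n^{\top}\mathbf{X}_n$, one has with probability at least $1-4\eta$ that
\[
\bigl\|\alpha_n^{-1/2}\hat{\mathbf{X}}_n - \mathbf{X}_n\mathbf{W}_1\bigr\|_{2\to\infty} \le C\,\frac{\log^{2}(n/\eta)}{\sqrt{n\alpha_n}}.
\]
This refines \eqref{eq:14} to the row-wise norm and is proved following Lemma~2.5 of \citet{lyzinski13:_perfec} while carrying the sparsity factor: the concentration inequality of \citet{oliveira2009concentration} gives $\|\mathbf{A}_n - \alpha_n\mathbf{X}_n\mathbf{X}_n^{\top}\| = O(\sqrt{n\alpha_n})$ once $n\alpha_n=\omega(\log n)$, the nonzero eigenvalues of $\alpha_n\mathbf{X}_n\mathbf{X}_n^{\top}$ scale as $\Theta(n\alpha_n)$, and inserting these into the Davis--Kahan subspace bound followed by a leave-one-out row expansion yields the displayed estimate. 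Its shape explains the hypothesis $n\alpha_n=\omega(\log^4 n)$: this is exactly what forces the right-hand side to $o(1)$. Choosing $\eta=n^{-c}$ with $c>1$ makes the failure probabilities summable, so Borel--Cantelli promotes the bound to $\|\alpha_n^{-1/2}\hat{\mathbf{X}}_n-\mathbf{X}_n\mathbf{W}_1\|_{2\to\infty}\to 0$ almost surely; the identical argument under $m\beta_m=\omega(\log^4 m)$ gives $\|\beta_m^{-1/2}\hat{\mathbf{Y}}_m-\mathbf{Y}_m\mathbf{W}_2\|_{2\to\infty}\to 0$ almost surely.

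With the embedding errors controlled I would transfer them to the statistic as in \S~\ref{sec:2techlemma}. Compactness of $\Omega$ together with the vanishing $2\to\infty$ errors keeps every $\alpha_n^{-1/2}\hat{X}_i$ eventually inside a fixed compact neighbourhood of $\Omega$, on which the $C^{2}$ kernel $\kappa$ is Lipschitz, so $|\kappa(\alpha_n^{-1/2}\hat{X}_i,\alpha_n^{-1/2}\hat{X}_j)-\kappa(\mathbf{W}_1 X_i,\mathbf{W}_1 X_j)|\le C\|\alpha_n^{-1/2}\hat{\mathbf{X}}_n-\mathbf{X}_n\mathbf{W}_1\|_{2\to\infty}$, with the analogous bounds for the $\hat{Y}$-terms and the cross terms. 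Summing the three groups of terms defining $U_{n,m}$ gives
\[
\bigl|U_{n,m}(\alpha_n^{-1/2}\hat{\mathbf{X}}_n,\beta_m^{-1/2}\hat{\mathbf{Y}}_m)-U_{n,m}(\mathbf{X}_n\mathbf{W}_1,\mathbf{Y}_m\mathbf{W}_2)\bigr|\le 2C\bigl(\|\alpha_n^{-1/2}\hat{\mathbf{X}}_n-\mathbf{X}_n\mathbf{W}_1\|_{2\to\infty}+\|\beta_m^{-1/2}\hat{\mathbf{Y}}_m-\mathbf{Y}_m\mathbf{W}_2\|_{2\to\infty}\bigr),
\]
whose right-hand side tends to $0$ almost surely. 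Finally, because $\kappa$ is radial the within-sample sums are invariant under $\mathbf{W}_1$ and $\mathbf{W}_2$ while the cross sum depends only on $\mathbf{W}_2\mathbf{W}_1^{\top}$, so $U_{n,m}(\mathbf{X}_n\mathbf{W}_1,\mathbf{Y}_m\mathbf{W}_2)=U_{n,m}(\mathbf{X}_n,\mathbf{Y}_m\mathbf{W}_{n,m})$ with $\mathbf{W}_{n,m}=\mathbf{W}_2\mathbf{W}_1^{\top}$, which is precisely the centering in \eqref{eq:conv_mmdXhat_null_sparse}.

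The hard part will be the sparse $2\to\infty$ bound of the second paragraph, since the remaining steps are essentially identical to the dense first-order argument already recorded in \S~\ref{sec:2techlemma}. Two features make this proposition cleaner than Theorem~\ref{thm:mmd_unbiased_ase}: because the target is centered at the \emph{random} $\mathbf{W}_{n,m}=\mathbf{W}_2\mathbf{W}_1^{\top}$ rather than a deterministic $\mathbf{W}_0$, no additional Davis--Kahan step is needed to pass from $\mathbf{W}_{n,m}$ to $\mathbf{W}_0$; and because there is no $(m+n)$ factor, it is enough that the $2\to\infty$ errors vanish rather than vanish at rate $o((m+n)^{-1})$. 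The only genuine technical work is thus confirming that the perturbation analysis of \citet{lyzinski13:_perfec,tang14:_two} survives the substitution $\mathbf{P}=\alpha_n\mathbf{X}_n\mathbf{X}_n^{\top}$ with shrinking spectral scale $n\alpha_n$, for which the assumption $n\alpha_n=\omega(\log^4 n)$ is the operative condition.
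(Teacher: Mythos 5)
Your proposal is correct in outline, but it routes the key technical step differently from the paper. The paper never proves, nor needs, a sparse $\|\cdot\|_{2\to\infty}$ bound: it extends only the \emph{Frobenius}-norm part of Lemma~\ref{lem:2} to the sparse regime (Eq.~\eqref{eq:14}, $\|\alpha_n^{-1/2}\hat{\mathbf{X}}_n - \mathbf{X}_n\mathbf{W}_n\|_F \le \alpha_n^{-1/2}C_1$), and then compensates for the lack of uniform row-wise control with a counting argument. Concretely, it defines good sets $\mathcal{S}_X=\{i \colon \|\alpha_n^{-1/2}\hat{X}_i - \mathbf{W}_1X_i\| \le C_1(n\alpha_n)^{-1/2}\log n\}$ and $\mathcal{S}_Y$ analogously; a Markov-type count applied to the squared Frobenius bound shows the complements have cardinality $o(n)$ and $o(m)$ with high probability; the bad indices then contribute only $o(1)$ to $U_{n,m}$ because the kernel is bounded; and the Lipschitz transfer is applied only on the good sets, yielding $|\psi| \le 2C\bigl(C_1(n\alpha_n)^{-1/2}\log n + C_2(m\beta_m)^{-1/2}\log m\bigr) + o(1)$. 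Your route instead front-loads all the difficulty into a uniform sparse $2\to\infty$ lemma and then applies the \S~\ref{sec:2techlemma} argument to every row. Both work, and your radiality step ($U_{n,m}(\mathbf{X}_n\mathbf{W}_1,\mathbf{Y}_m\mathbf{W}_2)=U_{n,m}(\mathbf{X}_n,\mathbf{Y}_m\mathbf{W}_{n,m})$) matches the paper's. What each buys: the paper's argument is self-contained given the comparatively easy spectral/Frobenius extension to shrinking $\alpha_n$, whereas your argument requires redoing the row-wise perturbation analysis of \citet{lyzinski13:_perfec} under sparsity --- a genuinely harder intermediate lemma, though one that is true under $n\alpha_n=\omega(\log^4 n)$ and whose proof sketch (Oliveira concentration, Davis--Kahan with eigenvalues of order $n\alpha_n$, row-wise expansion) is the standard and viable roadmap --- and in exchange delivers uniform control of every row, a cleaner final inequality, and an explicit rate. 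One further contrast worth noting: in the paper's argument the hypothesis $n\alpha_n=\omega(\log^{4}n)$ is used only to guarantee that the underlying sparse concentration results (hence Eq.~\eqref{eq:14}) hold, since the displayed bound on $|\psi|$ would already vanish when $n\alpha_n=\omega(\log^{2}n)$; in your argument the same hypothesis is additionally what forces your claimed rate $\log^2(n/\eta)/\sqrt{n\alpha_n}$ to $o(1)$, so it is doing double duty.
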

\begin{proof}[Proof Sketch]
  Let $\psi =   U_{n,m}(\alpha_n^{-1/2} \hat{\mathbf{X}}_n, \beta_m^{-1/2} \hat{\mathbf{Y}}_m) - U_{n,m}(\mathbf{X}_n,
  \mathbf{Y}_m \mathbf{W}_{n,m} )$.  
 We have 
 \begin{equation*}
   \begin{split}
\psi & = \frac{1}{n(n-1)}
    \sum_{j \not = i}
    (\kappa( \alpha_n^{-1/2} \hat{X}_i,
     \alpha_n^{-1/2} \hat{X}_j) - \kappa(\mathbf{W}_1 X_i, \mathbf{W}_1 X_j)) \\ &- \frac{2}{mn} \sum_{i=1}^{n}
    \sum_{k=1}^{m} (\kappa(\alpha_n^{-1/2} \hat{X}_i,
    \beta_m^{-1/2} \hat{Y}_k) - \kappa(\mathbf{W}_1 X_i, \mathbf{W}_2 Y_k)) \\ &+ \frac{1}{m(m-1)} \sum_{l \not
      = k} \kappa( \beta_m^{-1/2} \hat{Y}_k,  \beta_m^{-1/2} \hat{Y}_l) -
    \kappa(\mathbf{W}_2 Y_k, \mathbf{W}_2 Y_l)).
   \end{split}
 \end{equation*}
 Let $\mathcal{S}_X \subset \{1,2,\dots,n\}$ and $\mathcal{S}_Y
 \subset \{1,2,\dots,m\}$ be defined by 
 \begin{gather*}
   \mathcal{S}_X = \{i \, \, \colon \,\, \|\alpha_n^{-1/2} \hat{X}_i -
 \mathbf{W}_1 X_i \| \leq C_1 (n \alpha_n)^{-1/2} \log{n} \}, \\ 
   \mathcal{S}_Y = \{k \, \, \colon \,\, \|\beta_m^{-1/2} \hat{Y}_k -
 \mathbf{W}_2 Y_k \| \leq C_2 (m \beta_m)^{-1/2} \log{m} \}.  
 \end{gather*}
From Eq.~\eqref{eq:14}, the number of indices $i \not\in S_X$
is of order $o(n)$, with high probability. Similarly, the number of
indices $k \not \in S_Y$ is of order $o(m)$ with high probability. 
Therefore,
\begin{equation*}
  \begin{split}
\psi & = \frac{1}{n(n-1)}
    \sum_{i \in S_X} \sum_{ j \in S_X}
    (\kappa( \alpha_n^{-1/2} \hat{X}_i,
     \alpha_n^{-1/2} \hat{X}_j) - \kappa(\mathbf{W}_1 X_i,
     \mathbf{W}_1 X_j)) \\ &- \frac{2}{mn} \sum_{i \in S_X}
    \sum_{k \in S_Y} (\kappa(\alpha_n^{-1/2} \hat{X}_i,
    \beta_m^{-1/2} \hat{Y}_k) - \kappa(\mathbf{W}_1 X_i, \mathbf{W}_2
    Y_k)) \\ &+ \frac{1}{m(m-1)} \sum_{k \in S_Y} \sum_{l \in S_Y} \kappa( \beta_m^{-1/2} \hat{Y}_k,  \beta_m^{-1/2} \hat{Y}_l) -
    \kappa(\mathbf{W}_2 Y_k, \mathbf{W}_2 Y_l)) + o(1).
   \end{split}
\end{equation*}
We consider the term $1/(n(n-1)) \sum_{i \in S_X} \sum_{ j \in S_X}
    (\kappa( \alpha_n^{-1/2} \hat{X}_i,
     \alpha_n^{-1/2} \hat{X}_j) - \kappa(\mathbf{W}_1 X_i,
     \mathbf{W}_1 X_j))$.  
By the differentiability of $\kappa$ and compactness of $\Omega$, we
have
$$ |\kappa( \alpha_n^{-1/2} \hat{X}_i,  \alpha_n^{-1/2} \hat{X}_j) - \kappa(\mathbf{W}_1 X_i,
  \mathbf{W}_1 X_j)| \leq C \max \{ \| \alpha_n^{-1/2} \hat{X}_i - \mathbf{W}_1 X_i \|, \|
  \alpha_n^{-1/2} \hat{X}_j - \mathbf{W}_1 X_j \| \} $$
for some constant $C$ independent of $i$ and $j$. 
Thus, \begin{equation*}
  \Bigl|\frac{1}{n(n-1)} \sum_{i \in S_X} \sum_{j \in S_X}     (\kappa( \alpha_n^{-1/2} \hat{X}_i,
     \alpha_n^{-1/2} \hat{X}_j) - \kappa(\mathbf{W}_1 X_i,
     \mathbf{W}_n X_j)) \Bigr| \leq 
  \max_{i \in S_X} C \|\alpha_n^{-1/2} \hat{X}_i -
  \mathbf{W}_1 X_i \|.
\end{equation*}
Similar reasoning yields
\begin{equation*}
  \begin{split}
  |\psi| & \leq 2 C (\max_{i \in S_X} \|\alpha_n^{-1/2} \hat{X}_i -
  \mathbf{W}_1 X_i \| + \max_{k \in S_Y} \|\beta_m^{-1/2}
  \hat{Y}_k - \mathbf{W}_2 Y_k \|) + o(1) \\ & \leq 2 C (C_1 (n
  \alpha_n)^{-1/2} \log n + C_2 (m \beta_m)^{-1/2} \log m) + o(1) 
  \end{split}
\end{equation*}
with high probability. 
As $n \alpha_n = \omega(\log^{4} n)$ and $m \beta_m = \omega(\log^{4}
m)$, we have
$\psi \overset{\mathrm{a.s.}}{\longrightarrow} 0$ as $m, n \rightarrow \infty$. 
\end{proof}

We assume in the statement of Proposition~\ref{prop:mmd_unbiased_sparse} that the sparsity factors
$\alpha_n$ and $\beta_m$ are known. If $\alpha_n$ and $\beta_m$ are
unknown, then they can be estimated from the adjacency spectral
embedding of $\mathbf{A}_n$ and $\mathbf{B}_m$, but only up to some
constant factor. Hence the hypothesis test of
\begin{align*}
  H_{0} \colon F \upVdash G \quad 
  \text{against} \quad H_{A} \colon F  \nupVdash G
\end{align*}
is no longer meaningful (as the sparsity factors $\alpha_n$ and
$\beta_m$ cannot be determined uniquely) and one should consider
instead the hypothesis test of equality up to scaling of
\S~\ref{sec:scaling-case}.  
 
We note that the conclusion of
Proposition~\ref{prop:mmd_unbiased_sparse}, namely
Eq.~\eqref{eq:conv_mmdXhat_null_sparse}, is weaker than that of
Theorem~\ref{thm:mmd_unbiased_ase} due to the lack of the $m+n$ factor
in Eq.~\eqref{eq:conv_mmdXhat_null_sparse} as compared to
Eq.~\eqref{eq:conv_mmdXhat_null} and Eq.~\eqref{eq:conv_mmdXhat_alt}. 
The more difficult question, and one which we will not address in this paper, is to
refine the rate of convergence to zero of
Eq.~\eqref{eq:conv_mmdXhat_null_sparse} in the sparse setting. We suspect, however, that
Eq.~\eqref{eq:conv_mmdXhat_null} will not hold in the case where
$\alpha_n = o(n^{-1/2})$ and $\beta_m = o(m^{-1/2})$. 
Nevertheless, Proposition~\ref{prop:mmd_unbiased_sparse} still yields
yields a test procedure that is
consistent against any alternative, provided that both $F$ and $G$
satisfy Assumption~\ref{ass:rank_F}, and that the sparsity factors
$\alpha_n$ and $\beta_m$ do not converge to $0$ too quickly.

\section{Discussion}
\label{sec:conclusions}
In summary, we show in this paper that the adjacency spectral
embedding can be used to generate simple and intuitive test statistics
for the nonparametric inference problem of testing whether two random
dot product graphs have the same or related distribution of latent
positions. The two-sample formulations presented here and the
corresponding test statistics are intimately related. Indeed, for
random dot product graphs, the adjacency spectral embedding yields a
consistent estimate of the latent positions as points in
$\mathbb{R}^{d}$; there then exist a wide variety of classical and
well-studied testing procedures for data in Euclidean spaces.

New results on stochastic blockmodels suggest that they can be
regarded as a universal approximation to graphons in exchangeable
random graphs, see e.g., \citet{yang14:_nonpar,wolfe13:_nonpar}.  There
is thus potential theoretical value in the formulation of two-sample
hypothesis testing for latent position models in terms of a random dot
product graph model on $\mathbb{R}^{d}$ with possibly varying
$d$. However, because the link function and the distribution of latent
positions are intertwined in the context of latent position graphs,
any proposed test procedure that is sufficiently general might also
possess little to no power.

The two-sample hypothesis testing we consider here is also closely
related to the problem of testing goodness-of-fit; the results in this
paper can be easily adapted to address the latter question. In
particular, we can test, for a given graph, whether the graph is
generated from some specified stochastic blockmodel. A more general
problem is that of testing whether a given graph is generated
according to a latent position model with a specific link
function. This problem has been recently studied; see
\citet{yang14:_nonpar} for a brief discussion, but much remains to be
investigated. For example, the limiting distribution of the test
statistic in \citet{yang14:_nonpar} is not known.

Finally, two-sample hypothesis testing is also closely related to
testing for independence; given a random sample $\{(X_i,Y_i)\}$ with
joint distribution $F_{XY}$ and marginal distributions $F_X$ and
$F_Y$, $X$ and $Y$ are independent if the $F_{XY}$ differs from the
product $F_XF_Y$. For example, the Hilbert-Schmidt independence
criterion is a measure for statistical dependence in terms of the
Hilbert-Schmidt norm of a cross-covariance operator. It is based on
the maximum mean discrepancy between $F_{XY}$ and $F_{X}
F_{Y}$. Another example is Brownian distance covariance of
\citet{szekely09:_brown}, a measure of dependence based on the energy
distance between $F_{XY}$ and $F_{X} F_{Y}$. In particular, consider
the test of whether two given two random dot product graphs
$(\mathbf{X}, \mathbf{A}) \sim \mathrm{RDPG}(F_X)$ and $(\mathbf{Y},
\mathbf{B}) \sim \mathrm{RDPG}(F_Y)$ on the same vertex set have
independent latent position distributions $F_X$ and $F_Y$. While we
surmise that it may be possible to adapt our present results to this
question, we stress that the conditional independence of $\mathbf{A}$
given $\mathbf{X}$ and of $\mathbf{B}$ given $\mathbf{Y}$ suggests
that independence testing may merit a more intricate approach.
\bibliography{../biblio}
\appendix
\section{Additional Proofs}
\label{sec:proofs}
\label{sec:append-addit-lemm}

{\bf Proof of Lemma~\ref{lem:emp_proc}}:
As $\kappa$ is twice continuously differentiable, $\mathcal{F}$ is also
twice continuously differentiable \citep[Corollary~4.36]{steinwart08:_suppor_vector_machin}.
Denote by $\mathbf{W}$ the orthogonal matrix such that $\mathbf{X} =
\mathbf{U}_{\mathbf{P}} \mathbf{S}_{\mathbf{P}}^{1/2} \mathbf{W}$.
% Note that $\tfrac{1}{n} \mathbf{X}^{\top} \mathbf{X} = \tfrac{1}{n}
% \mathbf{W}^{\top} \mathbf{S}_{\mathbf{P}} \mathbf{W} \rightarrow
% \mathbb{E}[XX^{\top}]$ as $n \rightarrow \infty$ and thus
% $\|\mathbf{W} - \mathbf{V}^{\top}\| \rightarrow 0$ as $n \rightarrow
% \infty$.

Let $f \in \mathcal{F}_{\Phi}$, a Taylor expansion of $f$ then yields
  % \begin{equation*} \Phi(X_i) - \Phi(\mathbf{W} \hat{X}_i) = (\partial
  %   \Phi(X_i))^{T}(\mathbf{W} \hat{X}_i - X_i) + (\mathbf{W} \hat{X}_i - X_i)^{T}
  %   (\partial^{2} \Phi(X_i^{*})) (\mathbf{W} \hat{X}_i - X_i).
  % \end{equation*}
  % We thus have
  \begin{equation*}
    \begin{split}
     \frac{1}{\sqrt{n}} \sum_{i=1}^{n} \Bigl(f(\mathbf{W} \hat{X}_i) -
    f(X_i)\Bigr)
     &= \frac{1}{\sqrt{n}} \sum_{i=1}^{n} (\partial
     f)(X_i)^{\top}(\mathbf{W} \hat{X}_i - X_i) \\ & +
     \frac{1}{2 \sqrt{n}} \sum_{i} (\mathbf{W} \hat{X}_i - X_i)^{\top}
    (\partial^{2} f)(X_i^{*}) (\mathbf{W} \hat{X}_i - X_i)
    \end{split}
  \end{equation*}
  where, for any $i$, $X_i^{*} \in \mathbb{R}^{d}$ is such that
  $\|X_i^{*} - X_i \| \leq \|\mathbf{W} \hat{X}_i - X_i\|$. We first
  bound the quadratic terms, i.e. those depending on $\partial^{2}
  f$. We note that since $\kappa$ is twice continuously
  differentiable, $\sup_{f \in \mathcal{F}_\Phi, X \in \Omega}
  \|(\partial^{2} f)(X) \|$ is bounded (the norm under consideration
  is the spectral norm on matrices). Therefore, 
  \begin{equation*} \begin{split} \sup_{f \in \mathcal{F}_{\Phi}}
      \Big| \sum_{i=1}^{n} \frac{(\mathbf{W} \hat{X}_i - X_i)^{\top}
        (\partial^{2} f)(X_i^{*}) (\mathbf{W} \hat{X}_i -
        X_i)}{\sqrt{n}} \Big|&% \leq
      % \frac{1}{\sqrt{n}} \sum_{i=1}^{n} \sup_{Z \in \mathcal{X}} \|
      % \partial^{2} \Phi(Z) \| (\mathbf{W} \hat{X}_i -
      % X_i)^{T}(\mathbf{W} \hat{X}_i - X_i) \\ & 
      \leq \sup_{f \in \mathcal{F}, X \in \Omega} \frac{\|
        (\partial^{2} f)(X) \| \| \hat{\mathbf{X}} \mathbf{W} -
      \mathbf{X}\|_{F}^{2} } {\sqrt{n}}.
    \end{split}
  \end{equation*}
Hence, by applying Lemma~\ref{lem:2} to bound
  $\| \hat{\mathbf{X}} \mathbf{W} - \mathbf{X}\|_{F}^{2}$ in the above
  expression, we have
  \begin{equation*}
    \sup_{f \in \mathcal{F}_{\Phi}}
      \Big| \sum_{i=1}^{n} \frac{(\mathbf{W} \hat{X}_i - X_i)^{\top}
        (\partial^{2} f)(X_i^{*}) (\mathbf{W} \hat{X}_i -
        X_i)}{\sqrt{n}} \Big| \overset{\mathrm{a.s.}}{\longrightarrow} 0
  \end{equation*}
  as $n \rightarrow \infty$. 

  We now bound the linear terms, i.e., those depending on $\partial
  f$. For any $f \in \mathcal{F}_\Phi$, and any $X_1, \dots, X_n$, let
  $\mathbf{M}(\partial f)=\mathbf{M}(\partial f; X_1, \cdots, X_n) \in \mathbb{R}^{n
    \times d}$ be the matrix whose rows are the vectors
  $(\partial f)(X_i)$. % Denote by $\mathbf{W}$ the orthogonal matrix such that
  % $\mathbf{X} = \mathbf{U}_{\mathbf{P}} \mathbf{S}_{\mathbf{P}}^{1/2}
  % \mathbf{W}$.  Note that $\tfrac{1}{n} \mathbf{X}^{\top} \mathbf{X} =
  % \tfrac{1}{n} \mathbf{W}^{\top} \mathbf{S}_{\mathbf{P}} \mathbf{W}
  % \rightarrow \mathbb{E}[XX^{\top}]$ as $n \rightarrow \infty$ and
  % thus $\|\mathbf{W} - \mathbf{V}^{\top}\| \rightarrow 0$ as $n
  % \rightarrow \infty$.  
  We then have
  \begin{equation*}
    \begin{split} 
      \zeta(f) &:= \frac{1}{\sqrt{n}} \sum_{i=1}^{n} (\partial
      f)(X_i)^{\top} (\mathbf{W} \hat{X}_i - X_i) \\ &= \frac{1}{\sqrt{n}} \mathrm{tr}
      \Bigl(( \hat{\mathbf{X}} \mathbf{W} - \mathbf{X})
      [\mathbf{M}(\partial f)]^{\top}\Bigr) = \frac{1}{\sqrt{n}} \mathrm{tr} 
      \Bigl((\mathbf{U}_{\mathbf{A}} \mathbf{S}_{\mathbf{A}}^{1/2} -
      \mathbf{U}_{\mathbf{P}} \mathbf{S}_{\mathbf{P}}^{1/2})
      \mathbf{W} [\mathbf{M}(\partial f)]^{\top}\Bigr). %\\
      % &= \frac{1}{\sqrt{n}} \mathrm{tr} \Bigl(\Bigl(\mathbf{A}
      % (\mathbf{U}_{\mathbf{A}} -
      % \mathbf{U}_{\mathbf{P}}) \mathbf{S}_{\mathbf{A}}^{-1/2} + \mathbf{A}
      % \mathbf{U}_{\mathbf{P}} (\mathbf{S}_{\mathbf{A}}^{-1/2} -
      % \mathbf{S}_{\mathbf{P}}^{-1/2}) + (\mathbf{A} -
      % \mathbf{P}) \mathbf{U}_{\mathbf{P}}
      % \mathbf{S}_{\mathbf{P}}^{-1/2}\Bigr) \mathbf{W}
      % [\mathbf{M}(\partial f)]^{\top}\Bigr).
    \end{split}
  \end{equation*} 
  Now
  $\mathbf{A} = \mathbf{U}_{\mathbf{A}} \mathbf{S}_{\mathbf{A}}
  \mathbf{U}_{\mathbf{A}} ^{\top} + \mathbf{E}$
  where, as we recall  in Definition~\ref{def:ase},
  $\mathbf{S}_{\mathbf{A}}$ is the diagonal matrix containing the $d$
  largest eigenvalues of $|\mathbf{A}|$ (which coincides, with high probability, with the eigenvalues of $A$) and $\mathbf{U}_{\mathbf{A}}$
  is the matrix whose columns are the corresponding
  eigenvectors. The eigendecomposition of $\mathbf{E}$
  can be written in terms of the eigenvalues and eigenvectors that are
  not included in $\mathbf{S}_{\mathbf{A}}$ and
  $\mathbf{U}_{\mathbf{A}}$. Thus
  $\mathbf{E} \mathbf{U}_{\mathbf{A}} = \bm{0}$ and
  $\mathbf{U}_{\mathbf{A}} \mathbf{S}_{\mathbf{A}}^{1/2} =
  \mathbf{U}_{\mathbf{A}} \mathbf{S}_{\mathbf{A}}
  \mathbf{U}_{\mathbf{A}}^{\top} \mathbf{U}_{\mathbf{A}}
  \mathbf{S}_{\mathbf{A}}^{-1/2} = (\mathbf{U}_{\mathbf{A}}
  \mathbf{S}_{\mathbf{A}} \mathbf{U}_{\mathbf{A}}^{\top} +
  \mathbf{E}) \mathbf{U}_{\mathbf{A}} \mathbf{S}_{\mathbf{A}}^{-1/2} =
  \mathbf{A} \mathbf{U}_{\mathbf{A}}
  \mathbf{S}_{\mathbf{A}}^{-1/2}$.
  Similarly, $\mathbf{P} = \mathbf{U}_{\mathbf{P}}
  \mathbf{S}_{\mathbf{P}} \mathbf{U}_{\mathbf{P}} ^{\top}$ (because $\mathbf{P}$ is rank $d$)
  and 
  $\mathbf{U}_{\mathbf{P}} \mathbf{S}_{\mathbf{P}}^{1/2} = \mathbf{P}
  \mathbf{U}_{\mathbf{P}} \mathbf{S}_{\mathbf{P}}^{-1/2}$. Thus, 
  \begin{equation*}
    \begin{split}
    \zeta(f) &= \frac{1}{\sqrt{n}} \mathrm{tr} \Bigl(\mathbf{A}
    \mathbf{U}_{\mathbf{A}} \mathbf{S}_{\mathbf{A}}^{-1/2} - \mathbf{P}
    \mathbf{U}_{\mathbf{P}} \mathbf{S}_{\mathbf{P}}^{-1/2} \Bigr)
    \mathbf{W} [\mathbf{M}(\partial f)]^{\top}\Bigr) \\ &=
    \frac{1}{\sqrt{n}} \mathrm{tr} \Bigl(\Bigl(\mathbf{A}
      (\mathbf{U}_{\mathbf{A}} -
      \mathbf{U}_{\mathbf{P}}) \mathbf{S}_{\mathbf{A}}^{-1/2} + \mathbf{A}
      \mathbf{U}_{\mathbf{P}} (\mathbf{S}_{\mathbf{A}}^{-1/2} -
      \mathbf{S}_{\mathbf{P}}^{-1/2}) + (\mathbf{A} -
      \mathbf{P}) \mathbf{U}_{\mathbf{P}}
      \mathbf{S}_{\mathbf{P}}^{-1/2}\Bigr) \mathbf{W}
      [\mathbf{M}(\partial f)]^{\top}\Bigr).
    \end{split}
  \end{equation*}
  We therefore have
  \begin{equation}
  \label{eq:sup_f_decomposition1}
    \begin{split}
    \sup_{f \in \mathcal{F}_\Phi} |\zeta(f)| &\leq \frac{\sup_{f \in
        \mathcal{F}_\Phi}\|\mathbf{M}(\partial f)\|_{F}}{\sqrt{n}} \Bigl(
    \|\mathbf{A}(\mathbf{U}_{\mathbf{A}} - \mathbf{U}_{\mathbf{P}})
    \mathbf{S}_{\mathbf{A}}^{-1/2} \mathbf{W} \|_{F}  + \|\mathbf{A}
    \mathbf{U}_{\mathbf{P}} (\mathbf{S}_{\mathbf{A}}^{-1/2} -
    \mathbf{S}_{\mathbf{P}}^{-1/2}) \mathbf{W} \|_{F}\Bigr) \\ &+
    \frac{1}{\sqrt{n}} \,\, \sup_{f \in \mathcal{F}_\Phi}  |\mathrm{tr}
    \Bigl([\mathbf{M}(\partial f)]^{T} (\mathbf{A} - \mathbf{P}) \mathbf{U}_{\mathbf{P}}
    \mathbf{S}_{\mathbf{P}}^{-1/2} \mathbf{W}\Bigr)|.
    \end{split}
  \end{equation}
  We bound the first two terms on the right hand side of Eq.~\eqref{eq:sup_f_decomposition1}
  using the following result from \citet{lyzinski13:_perfec}.
  % This lemma provides a tighter bound of
  % $\|\hat{\mathbf{X}} \mathbf{W} -
  % \mathbf{X}\|_{F}$ %by decomposing this difference into terms that
  % %mimic one step of the power method iteration for $\mathbf{A}$, 
  % than that obtained by straightforward application of the Davis-Kahan
  % $\sin \Theta$ theorem \cite{davis70} and the concentration
  % inequality for the spectral norm of \cite{oliveira2009concentration}
  % to the difference $\mathbf{A} - \mathbf{P}$.
  \begin{lemma}
    \label{lem:4}
    Let $(\mathbf{X}, \mathbf{A}) \sim \mathrm{RDPG}(F)$ and let $c>0$ be
    arbitrary but fixed. There exists $n_0(c)$ such that if $n>n_0$ and
    $\eta$ satisfies $n^{-c} < \eta< 1/2$, then with probability at
    least $1-2 \eta$, the following bounds hold simultaneously
    \begin{gather}
      \label{eq:17}
      \|\mathbf{A} (\mathbf{U}_{\mathbf{A}} - \mathbf{U}_{\mathbf{P}})
      \mathbf{S}_{\mathbf{A}}^{-1/2} \|_{F} 
      \leq \frac{24 \sqrt{2} d
        \log{(n/\eta)}}{\sqrt{\gamma^{5}(F) n}}, \\
      \label{eq:18}
      \|\mathbf{A} \mathbf{U}_{\mathbf{P}} (\mathbf{S}_{\mathbf{A}}^{-1/2}
      - \mathbf{S}_{\mathbf{P}}^{-1/2}) \|_{F} 
      \leq \frac{48 d \log{(n/\eta)}}{\sqrt{\gamma^{7}(F) n}},
    \end{gather}
    where $\gamma(F)$ is the minimum gap between the distinct eigenvalues of the matrix 
    $\mathbb{E}[X_1 X_1^{\top}]$ with $X_1 \sim F$.
  \end{lemma}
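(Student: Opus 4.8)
This is the result of \citet{lyzinski13:_perfec}, and I would establish it from matrix concentration, the Davis--Kahan theorem, and Hoeffding's inequality applied to quadratic forms in the noise $\mathbf{A}-\mathbf{P}$, where $\mathbf{P}=\mathbf{X}\mathbf{X}^{\top}$. First I would fix a single high-probability event on which all subsequent estimates hold. On this event: (i) the spectral-norm concentration of \citet{oliveira2009concentration} gives $\|\mathbf{A}-\mathbf{P}\|\leq C_0\sqrt{n\log(n/\eta)}$; (ii) since the rows of $\mathbf{X}$ are i.i.d.\ from $F$, the matrix $n^{-1}\mathbf{X}^{\top}\mathbf{X}$ concentrates around $\Ex[X_1X_1^{\top}]$, so by Weyl's inequality and the distinct-eigenvalues hypothesis the nonzero eigenvalues of $\mathbf{P}$ obey $\lambda_i(\mathbf{P})\geq c\,n$ with consecutive gaps at least $c'\gamma(F)\,n$; and (iii) because $\lambda_i(\mathbf{P})=0$ for $i>d$ while $\lambda_i(\mathbf{P})\geq cn$ for $i\le d$, Weyl's inequality forces the top $d$ eigenvalues of $|\mathbf{A}|$ to coincide with the $d$ largest (positive) eigenvalues of $\mathbf{A}$, so $\mathbf{A}\mathbf{U}_{\mathbf{A}}=\mathbf{U}_{\mathbf{A}}\mathbf{S}_{\mathbf{A}}$ and $\|\mathbf{S}_{\mathbf{A}}^{-1/2}\|\leq(cn)^{-1/2}$. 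A union bound over these events and the $d$ events below gives the claimed probability $1-2\eta$.

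For Eq.~\eqref{eq:18} I would first establish the \emph{refined} eigenvalue estimate $|\lambda_i(\mathbf{A})-\lambda_i(\mathbf{P})|\leq C\log(n/\eta)$ for $i\le d$, which is far sharper than the $O(\|\mathbf{A}-\mathbf{P}\|)=O(\sqrt n)$ that Weyl alone yields. Conditionally on $\mathbf{X}$, the quadratic form $(\bm{u}_i^{\mathbf{P}})^{\top}(\mathbf{A}-\mathbf{P})\bm{u}_i^{\mathbf{P}}$ is a sum of independent bounded mean-zero terms of variance $O(1)$, so Hoeffding's inequality gives $|(\bm{u}_i^{\mathbf{P}})^{\top}(\mathbf{A}-\mathbf{P})\bm{u}_i^{\mathbf{P}}|=O(\sqrt{\log(n/\eta)})$; writing $\bm{u}_i^{\mathbf{A}}=\bm{u}_i^{\mathbf{P}}+(\bm{u}_i^{\mathbf{A}}-\bm{u}_i^{\mathbf{P}})$, bounding $\|\bm{u}_i^{\mathbf{A}}-\bm{u}_i^{\mathbf{P}}\|=O(\sqrt{\log(n/\eta)/n})$ by Davis--Kahan, and estimating the residual quadratic form in $\mathbf{P}$ by $O(\|\sin\Theta\|^2\|\mathbf{P}\|)=O(\log(n/\eta))$ gives the claim. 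Since $\mathbf{S}_{\mathbf{A}}^{-1/2}-\mathbf{S}_{\mathbf{P}}^{-1/2}$ is diagonal with entries at most $|\lambda_i(\mathbf{A})-\lambda_i(\mathbf{P})|/(2\min\{\lambda_i(\mathbf{A}),\lambda_i(\mathbf{P})\}^{3/2})=O(\log(n/\eta)\,n^{-3/2})$, splitting $\mathbf{A}\mathbf{U}_{\mathbf{P}}=\mathbf{U}_{\mathbf{P}}\mathbf{S}_{\mathbf{P}}+(\mathbf{A}-\mathbf{P})\mathbf{U}_{\mathbf{P}}$ and bounding each piece in Frobenius norm produces the $O(d\log(n/\eta)/\sqrt{\gamma^{7}(F)n})$ rate.

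For Eq.~\eqref{eq:17} the naive estimate $\|\mathbf{A}\|\,\|\mathbf{U}_{\mathbf{A}}-\mathbf{U}_{\mathbf{P}}\|_F\,\|\mathbf{S}_{\mathbf{A}}^{-1/2}\|$ is only $O(1)$ and does not decay, so the essential step is to expose a cancellation by splitting the perturbation across the range of $\mathbf{U}_{\mathbf{P}}$ and its complement,
\[
\mathbf{U}_{\mathbf{A}}-\mathbf{U}_{\mathbf{P}}=\mathbf{U}_{\mathbf{P}}\bigl(\mathbf{U}_{\mathbf{P}}^{\top}\mathbf{U}_{\mathbf{A}}-\mathbf{I}\bigr)+\bigl(\mathbf{I}-\mathbf{U}_{\mathbf{P}}\mathbf{U}_{\mathbf{P}}^{\top}\bigr)\mathbf{U}_{\mathbf{A}},
\]
splitting $\mathbf{A}(\mathbf{U}_{\mathbf{A}}-\mathbf{U}_{\mathbf{P}})\mathbf{S}_{\mathbf{A}}^{-1/2}$ into an ``in-range'' and an ``out-of-range'' term. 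For the in-range term I would use that $\mathbf{U}_{\mathbf{P}}^{\top}\mathbf{U}_{\mathbf{A}}-\mathbf{I}$ is second order in the principal angles, $\|\mathbf{U}_{\mathbf{P}}^{\top}\mathbf{U}_{\mathbf{A}}-\mathbf{I}\|_F=O(\log(n/\eta)/n)$, which times $\|\mathbf{A}\mathbf{U}_{\mathbf{P}}\|=O(n)$ and $\|\mathbf{S}_{\mathbf{A}}^{-1/2}\|=O(n^{-1/2})$ already gives $O(\log(n/\eta)/\sqrt n)$. The out-of-range term is where the key identity enters: because $\mathbf{P}\bigl(\mathbf{I}-\mathbf{U}_{\mathbf{P}}\mathbf{U}_{\mathbf{P}}^{\top}\bigr)=\mathbf{0}$, one has $\mathbf{A}\bigl(\mathbf{I}-\mathbf{U}_{\mathbf{P}}\mathbf{U}_{\mathbf{P}}^{\top}\bigr)=(\mathbf{A}-\mathbf{P})\bigl(\mathbf{I}-\mathbf{U}_{\mathbf{P}}\mathbf{U}_{\mathbf{P}}^{\top}\bigr)$, which replaces the factor $\|\mathbf{A}\|=O(n)$ by $\|\mathbf{A}-\mathbf{P}\|=O(\sqrt{n\log(n/\eta)})$; combining this with the Davis--Kahan $\sin\Theta$ bound $\|(\mathbf{I}-\mathbf{U}_{\mathbf{P}}\mathbf{U}_{\mathbf{P}}^{\top})\mathbf{U}_{\mathbf{A}}\|_F=O(\sqrt{\log(n/\eta)/n})$ and with $\|\mathbf{S}_{\mathbf{A}}^{-1/2}\|=O(n^{-1/2})$ yields a bound of order $\log(n/\eta)/\sqrt n$, carrying the appropriate power of $\gamma(F)$ from the eigenvalue lower bounds.

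The main obstacle is exactly this cancellation for Eq.~\eqref{eq:17}: the eigenvalue and eigenvector perturbations are both driven by the same quadratic forms in $\mathbf{A}-\mathbf{P}$, and a term-by-term application of Davis--Kahan destroys it, leaving an $O(1)$ bound that is useless. The projection splitting together with the identity $\mathbf{A}(\mathbf{I}-\mathbf{U}_{\mathbf{P}}\mathbf{U}_{\mathbf{P}}^{\top})=(\mathbf{A}-\mathbf{P})(\mathbf{I}-\mathbf{U}_{\mathbf{P}}\mathbf{U}_{\mathbf{P}}^{\top})$ is what extracts the extra factor $\|\mathbf{A}-\mathbf{P}\|/\|\mathbf{A}\|=O(\sqrt{\log(n/\eta)/n})$ that forces decay. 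The remaining work is bookkeeping: tracking how each use of Davis--Kahan and each lower bound $\lambda_i(\mathbf{P})\gtrsim\gamma(F)\,n$ contributes a power of $\gamma(F)$ (accounting for the $\gamma^{5}$ and $\gamma^{7}$ in the denominators) and verifying the explicit constants $24\sqrt2$ and $48$, neither of which is conceptually difficult.
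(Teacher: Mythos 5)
The paper does not actually prove this lemma: its ``proof'' consists of observing that Eq.~\eqref{eq:17} is a restatement of Lemma~3.4 of \citet{lyzinski13:_perfec} (using that the maximum row sum of $\mathbf{A}$ is at most $n$), that Eq.~\eqref{eq:18} follows from Lemma~3.2 of that paper together with $\|\mathbf{M}_1\mathbf{M}_2\|_{2\to\infty}\le\|\mathbf{M}_1\|_{2\to\infty}\|\mathbf{M}_2\|$, and that a union bound over the two events gives probability $1-2\eta$. Your self-contained derivation is therefore a genuinely different route, and it correctly reconstructs most of the machinery behind the cited lemmas: the high-probability event (Oliveira's bound, the $\Omega(n)$ eigenvalue and $\Omega(\gamma(F)n)$ gap lower bounds), the refined estimate $|\lambda_i(\mathbf{A})-\lambda_i(\mathbf{P})|=O(\log(n/\eta))$ via quadratic forms, the resulting bound on $\mathbf{S}_{\mathbf{A}}^{-1/2}-\mathbf{S}_{\mathbf{P}}^{-1/2}$, and the splitting of $\mathbf{A}(\mathbf{U}_{\mathbf{A}}-\mathbf{U}_{\mathbf{P}})\mathbf{S}_{\mathbf{A}}^{-1/2}$ with the key identity $\mathbf{A}(\mathbf{I}-\mathbf{U}_{\mathbf{P}}\mathbf{U}_{\mathbf{P}}^{\top})=(\mathbf{A}-\mathbf{P})(\mathbf{I}-\mathbf{U}_{\mathbf{P}}\mathbf{U}_{\mathbf{P}}^{\top})$. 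Your treatment of Eq.~\eqref{eq:18} and of the out-of-range half of Eq.~\eqref{eq:17} is sound as sketched.

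There is, however, one step whose stated justification is wrong, and it is the crux of Eq.~\eqref{eq:17}: the in-range bound $\|\mathbf{U}_{\mathbf{P}}^{\top}\mathbf{U}_{\mathbf{A}}-\mathbf{I}\|_F=O(\log(n/\eta)/n)$, which you attribute to ``$\mathbf{U}_{\mathbf{P}}^{\top}\mathbf{U}_{\mathbf{A}}-\mathbf{I}$ is second order in the principal angles.'' That assertion is false in general: principal angles only measure the discrepancy between the two column spaces and are blind to rotations of the eigenbasis \emph{within} the (nearly common) subspace---if $\mathbf{U}_{\mathbf{A}}$ equals $\mathbf{U}_{\mathbf{P}}$ times a small within-subspace rotation by angle $\epsilon$, every principal angle vanishes yet $\|\mathbf{U}_{\mathbf{P}}^{\top}\mathbf{U}_{\mathbf{A}}-\mathbf{I}\|_F\asymp\epsilon$. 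What is second order in the angles is the distance to the \emph{nearest orthogonal matrix}, not to $\mathbf{I}$. Indeed the diagonal entries do satisfy $1-\langle\bm{u}_i^{\mathbf{P}},\bm{u}_i^{\mathbf{A}}\rangle=O(\log(n/\eta)/n)$ (after fixing signs so that $\langle\bm{u}_i^{\mathbf{P}},\bm{u}_i^{\mathbf{A}}\rangle\ge 0$, a convention the lemma tacitly requires, since otherwise its conclusion fails), but the off-diagonal entries are first order in the perturbation: the naive bound $|\langle\bm{u}_i^{\mathbf{P}},\bm{u}_j^{\mathbf{A}}\rangle|\le\|\bm{u}_j^{\mathbf{A}}-\bm{u}_j^{\mathbf{P}}\|=O(\sqrt{\log(n/\eta)/n})$, multiplied by $\|\mathbf{A}\mathbf{U}_{\mathbf{P}}\|\,\|\mathbf{S}_{\mathbf{A}}^{-1/2}\|=O(\sqrt{n})$, leaves a non-decaying $O(\sqrt{\log(n/\eta)})$ and the argument collapses. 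The repair uses the eigengap inside the top-$d$ spectrum: from $\mathbf{A}\bm{u}_j^{\mathbf{A}}=\lambda_j(\mathbf{A})\bm{u}_j^{\mathbf{A}}$ and $\mathbf{P}\bm{u}_i^{\mathbf{P}}=\lambda_i(\mathbf{P})\bm{u}_i^{\mathbf{P}}$ one gets the exact identity
\begin{equation*}
\bigl(\lambda_j(\mathbf{A})-\lambda_i(\mathbf{P})\bigr)\,\langle\bm{u}_i^{\mathbf{P}},\bm{u}_j^{\mathbf{A}}\rangle=(\bm{u}_i^{\mathbf{P}})^{\top}(\mathbf{A}-\mathbf{P})\,\bm{u}_j^{\mathbf{A}},
\end{equation*}
whose right-hand side is $O(\log(n/\eta))$ by Hoeffding applied to $(\bm{u}_i^{\mathbf{P}})^{\top}(\mathbf{A}-\mathbf{P})\bm{u}_j^{\mathbf{P}}$ plus the Davis--Kahan correction $\|\mathbf{A}-\mathbf{P}\|\,\|\bm{u}_j^{\mathbf{A}}-\bm{u}_j^{\mathbf{P}}\|$, while for $i\neq j$ the prefactor on the left is at least $\gamma(F)n/2$ by Assumption~\ref{ass:rank_F} and Weyl's inequality. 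This yields $O(\log(n/\eta)/(\gamma(F)n))$ per off-diagonal entry and is precisely where the distinct-eigenvalues assumption and the extra powers of $\gamma(F)$ genuinely enter. With this substitution your outline is correct; the remaining differences from the stated lemma (explicit constants $24\sqrt{2}$ and $48$, exact powers of $\gamma$) are bookkeeping, as you say.
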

  Eq.~\eqref{eq:17} in the above lemma is a restatement of Lemma~3.4
  in \citet{lyzinski13:_perfec} where we have used the fact that the
  maximum row sum of $\mathbf{A}$ is $n$. Eq.~\eqref{eq:18} follows
  from Lemma~3.2 in \citet{lyzinski13:_perfec} and the fact that
  $\|\mathbf{M}_1 \mathbf{M}_2 \|_{2 \to \infty} \leq \|\mathbf{M}_1
  \|_{2 \to \infty} \|\mathbf{M}_2 \|$ for any matrices $\mathbf{M}_1$
  and $\mathbf{M}_2$.  As the individual bound in Eq.~\eqref{eq:17}
  and Eq.~\eqref{eq:18} holds with probabilty $1 - \eta$, they hold
  simultaneously with probability $1 - 2\eta$.

 Lemma~\ref{lem:4} then yields
  \begin{equation*}
\frac{\sup_{f \in
        \mathcal{F}_{\Phi}}\|\mathbf{M}(\partial f)\|_{F}}{\sqrt{n}} \Bigl(
    \|\mathbf{A}(\mathbf{U}_{\mathbf{A}} - \mathbf{U}_{\mathbf{P}})
    \mathbf{S}_{\mathbf{A}}^{-1/2} \mathbf{W} \|_{F}  + \|\mathbf{A}
    \mathbf{U}_{\mathbf{P}} (\mathbf{S}_{\mathbf{A}}^{-1/2} -
    \mathbf{S}_{\mathbf{P}}^{-1/2}) \mathbf{W} \|_{F}\Bigr) 
    \leq \frac{C(F) \log{n}}{\sqrt{n}}
  \end{equation*}
  with probability at least $1 - n^{-2}$, where $C(F)$ is a constant
  depending only on $F$. % The Borel-Cantelli lemma then
  % implies that the first two terms on the right hand side of Eq.~\eqref{eq:sup_f_decomposition1} 
  % converges to $0$ as $n \rightarrow \infty$, almost surely. 

  % Now for the last term of the above display,
  % let $\bm{m}_1(Z), \bm{m}_2(Z), \dots, \bm{m}_d(Z)$ be the columns of
  % $\mathbf{M}(Z)$ and let $\bm{u}_1, \bm{u}_2, \dots, \bm{u}_d$ be the
  % columns of $\mathbf{U}_{\mathbf{P}}$ (the eigenvectors of
  % $\mathbf{P}$). In addition, denote by $\lambda_1, \lambda_2, \dots,
  % \lambda_{d}$ the corresponding eigenvalues. We then have
  % \begin{equation}
  %   \label{eq:1}
  %   \frac{1}{\sqrt{n}} \sup_{Z \in \Omega} |\mathrm{tr}\,\,
  % [\mathbf{M}(Z)]^{T} (\mathbf{A} - \mathbf{P}) \mathbf{U}_{\mathbf{P}}
  %   \mathbf{S}_{\mathbf{P}}^{-1/2}| = \frac{1}{\sqrt{n}} \sup_{Z \in
  %   \Omega} \Bigl| \sum_{s=1}^{d}
  %   \lambda_{s}^{-1/2}\bm{m}_s(Z)^{T} (\mathbf{A} - \mathbf{P})
  %   \bm{u}_s \Bigr|.
  % \end{equation}
  
  We next show that the last term on the right hand side of
  Eq.~\eqref{eq:sup_f_decomposition1} is also of order
  $n^{-1/2} (\log{n})$ with probability at least $1 - n^{-2}$.
  % For simplicity, we shall only show that
  % \begin{equation*}
  %   \sup_{Z} \sum_{s=1}^{d}
  %   \lambda_{s}^{-1/2}\bm{m}_s(Z)^{T} (\mathbf{A} - \mathbf{P})
  %   \bm{u}_s \leq \frac{C \log{n}}{\sqrt{n}}
  % \end{equation*}
  % with high probability. 
  To control this supremum, we use a chaining argument. Denote by
  $\partial \mathcal{F}_{\Phi}$ the space of functions $\partial \mathcal{F}_{\Phi}
  = \{ \partial f \colon f \in \mathcal{F}_{\Phi}\}$ from $\mathbb{R}^{d}$ to
  $\mathbb{R}^{d}$.  For a given $\partial
  f \in \partial \mathcal{F}_{\Phi}$ let $\|\partial f\|_{\infty}$ denote the
  quantity $\sup_{X \in \Omega} \|(\partial f)(X)\|_{2}$, where
  $\|\cdot\|_{2}$ is the Euclidean norm in
  $\mathbb{R}^{d}$. Similarly, for given $\partial f, \partial g \in
  \mathcal{\partial F}_{\Phi}$, let $\|\partial f - \partial g\|_{\infty}$
  denote $\sup_{X \in \Omega} \|(\partial f - \partial g)(X)\|_{2}$. As
  $\kappa$ is twice continuously differentiable and $\Omega$ is
  compact, $\mathcal{\partial F}_{\Phi}$ is totally bounded with respect to
  $\|\cdot\|_{\infty}$. Put $\delta=\sup_{\partial f \in
    \mathcal{\partial F}_{\Phi}}
  \|\partial f\|_{\infty}$.  Then for any $j \in \mathbb{N}$, we can find a
  finite subset $S_j = \{\partial f_1, \partial f_2, \dots, \partial
  f_{n_j}\}$ of $\partial \mathcal{F}_{\Phi}$
  such that for any $\partial f \in \partial \mathcal{F}_{\Phi}$, there
  exists a $\partial f_{l} \in
  S_j$ with $\|\partial f - \partial f_l\|_{\infty} \leq \delta_{j} := 2^{-j} \delta$.
  We shall assume that $S_j$ is {\em minimal} among all sets with the
  above property. %Due to the compactness of $\Omega$ and
  %smoothness of $\Phi$, $|S_j|$ is finite for all $j < \infty$. 
 
  Given $S_j$, define $\Pi_{j}$ as the mapping that maps any $\partial
  f \in \partial \mathcal{F}_{\Phi}$ to an (arbitrary) $\partial f_{l}
  \in \mathcal{\partial F}_{\Phi}$ satisfying the condition
  $\|\partial f_l - \partial f \|_{\infty} \leq \delta_{j}$. Denote by
  $\tilde{X}_1, \dots, \tilde{X}_n$ the rows of the matrix $\mathbf{A}
  \mathbf{U}_{\mathbf{P}} \mathbf{S}_{\mathbf{P}}^{-1/2} \mathbf{W}
  $. Then by the separability of $\partial \mathcal{F}_{\Phi}$, we have
  \begin{equation*}
    \begin{split}
      \tilde{\zeta}(f) & := 
     \frac{1}{\sqrt{n}} \sup_{f \in \mathcal{F}_{\Phi}}
     |\mathrm{tr}[\mathbf{M}(\partial f)]^{\top} (\mathbf{A} -
     \mathbf{P}) \mathbf{U}_{\mathbf{P}}
     \mathbf{S}_{\mathbf{P}}^{-1/2} \mathbf{W}| \\ &
     = \sup_{f \in \mathcal{F}_{\Phi}} \Bigl|\frac{1}{\sqrt{n}} 
     \sum_{i=1}^{n} (\partial f)(X_i)^{\top} (\tilde{X}_i - X_i)\Bigr| \\ &
     = \sup_{f \in\mathcal{F}_{\Phi}} \Bigl|\Bigl( \frac{1}{\sqrt{n}} \sum_{i=1}^{n}
    \sum_{j=0}^{\infty} (\Pi_{j+1} \partial f - \Pi_{j} \partial f)(X_i)^{\top}
    (\tilde{X}_i - X_i)\Bigr) + \frac{c_0}{\sqrt{n}}\Bigr|  \\
    &= \sup_{f \in \mathcal{F}_{\Phi}} \Bigl|\Bigl(\frac{1}{\sqrt{n}} 
    \sum_{j=0}^{\infty} \sum_{i=1}^{n} (\Pi_{j+1} \partial f - \Pi_{j}
    \partial f)(X_i)^{\top} (\tilde{X}_i - X_i)\Bigr) + \frac{c_0}{\sqrt{n}}\Bigr| \\
    & \leq \sum_{j=0}^{\infty} \,\sup_{f \in \mathcal{F}_{\Phi}} \, \Bigl |\frac{1}{\sqrt{n}}
    \sum_{i=1}^{n} (\Pi_{j+1} \partial f - \Pi_{j} \partial f)(X_i)^{\top} (\tilde{X}_i -
    X_i) \Bigr| + \Bigl|\frac{c_0}{\sqrt{n}}\Bigr|
    \end{split}
  \end{equation*}
  where $c_0 = \sum_{i=1}^{n} (\Pi_0 \partial f)(X_i)^{T} (\tilde{X}_i - X_i)$.

  The term $n^{-1/2} \sum_{i=1}^{n} (\Pi_{j+1} \partial f -
  \Pi_{j} \partial f)(X_i)^{\top} (\tilde{X}_i -
  X_i)$ can be written as sum of quadratic form, i.e.,
  \begin{equation}
    \label{eq:19}
    \frac{1}{\sqrt{n}} \sum_{i=1}^{n} (\Pi_{j+1} \partial f -
    \Pi_{j} \partial f)(X_i)^{\top} (\tilde{X}_i -
    X_i) = \frac{1}{\sqrt{n}} \sum_{s=1}^{d}
    (\bm{\pi}^{(j,j+1)}_{s}(\partial f))^{\top} (\mathbf{A} -
    \mathbf{P}) \bm{u}_{s} \lambda_{s}^{-1/2}
    \end{equation}
    where $\bm{\pi}^{(j,j+1)}_{s}(\partial f)$ for $s = 1,2,\dots, d$ are the
    columns of the $n \times d$ matrix with rows  
    $\mathbf{W} (\Pi_{j+1} \partial f - \Pi_{j} \partial f)(X_i)$ for $i=1,\dots,n$ and
    $\bm{u}_{s}$ and $\lambda_{s}$ are the eigenvectors and corresponding eigenvalues of
    $\mathbf{P}$. 

    Now, for any vectors $\bm{b} = (b_1, b_2, \dots, b_n)$ and
    $\bm{c} = (c_1, c_2, \dots, c_n)$, 
  \begin{equation*}
    \bm{b}^{T} (\mathbf{A} - \mathbf{P}) \bm{c} = 2 \sum_{i < j}
    b_i (\mathbf{A} - \mathbf{P})_{ij} c_j + \sum_{i}
    \mathbf{P}_{ii} b_i c_i.
  \end{equation*}
  The sum over the indices $i < j$ in the above display is a sum of
  independent random variables. Therefore,  
  Hoeffding's inequality ensures that
  \begin{equation*}
    \begin{split}
    \mathbb{P}[ | 2 \sum_{i < j} b_i  (\mathbf{A} - \mathbf{P}) c_j|
    \geq t]  \leq 2 \exp \Bigl(-\frac{t^2}{  8 \sum_{i < j} b_i^{2}
      c_j^2}\Bigr) \leq 2 \exp \Bigl(-\frac{t^2}{ 8 \|\bm{b}\|^2 \|\bm{c}\|^2}\Bigr).
    \end{split}
  \end{equation*}
  In addition, $\sum_{i} \mathbf{P}_{ii} b_i c_i \leq
  \|\bm{b}\|\|\bm{c}\|$. %  and hence
  % \begin{equation*}
  %   \mathbb{P}[|\bm{b}^{T} (\mathbf{A} - \mathbf{P}) \bm{c}| \geq
  %   t] \leq 2 \exp \Bigl(-\frac{(t - \|\bm{b}\|\|\bm{c}\|)^2}
  %   {2 \|\bm{b}\|^2 \|\bm{c}\|^2}\Bigr)
  % \end{equation*}
  We apply the above argument to Eq.~\eqref{eq:19}. First,
  $\|\bm{\pi}_{s}^{(j,j+1)}(\partial f)\|_{2} \leq 3/2 \delta_{j} \sqrt{n}$ for
  all $\partial f \in \partial \mathcal{F}$. In addition, $\|\bm{u}_s\| = 1$ for all
  $s$. Hence, for all $t \geq 2 \delta_{j} \lambda_{d}^{-1/2}$,
  \begin{equation*}
    \begin{split}
     \mathbb{P}\Bigl[\frac{1}{\sqrt{n}}\Bigl|
    \sum_{s=1}^{d} (\bm{\pi}^{(j,j+1)}_{s}(\partial f))^{T} (\mathbf{A} -
    \mathbf{P}) \bm{u}_{s} \lambda_{s}^{-1/2}\Bigr| \geq dt\Bigr] \leq 2d
    \exp\Bigl(- \frac{t^2}{ K \delta_{j}^{2}  \lambda_{d}^{-1}}\Bigr)
    \end{split}
  \end{equation*}
  for some universal constant $K > 0$. Let $N_j$ be the cardinality
  of $\{\Pi_{j+1} \partial f - \Pi_{j} \partial f \colon f \in
  \mathcal{F}_{\Phi} \}$. Then by the union bound,
  \begin{equation*}
    \begin{split}
     \mathbb{P}\Bigl[\sup_{f \in \mathcal{F}_{\Phi}}
       \, \Bigl|\frac{1}{\sqrt{n}}
    \sum_{i=1}^{n} (\Pi_{j+1} \partial f - \Pi_{j} \partial f)(X_i)^{T} (\tilde{X}_i -
    X_i)\Bigr| \geq dt\Bigr] \leq 2d N_j
    \exp\Bigl(- \frac{t^2}{ K \delta_{j}^{2} \lambda_{d}^{-1}}\Bigr) .
    \end{split}
  \end{equation*}
  Now $N_j \leq |S_{j+1}|^{2}$ and hence, for any $t_j > 0$, 
  \begin{equation}
    \label{eq:21}
    \begin{split}
    \mathbb{P}\Bigl[\sup_{f \in
      \mathcal{F}_{\Phi}} \, \Bigl|\frac{1}{\sqrt{n}} 
    \sum_{i=1}^{n} (\Pi_{j+1} \partial f - \Pi_{j} \partial f)(X_i)^{T} (\tilde{X}_i - X_i)
    \Bigr| \geq \eta_j \Bigr] &
    \leq 2d \exp(-t_{j}^2). 
  \end{split}
  \end{equation}
  where $\eta_j = d\sqrt{K \delta_{j}^{2} \lambda_{d}^{-1} 
      (t_j^{2} + \log |S_{j+1}|^{2})}$.
  Summing Eq.~\eqref{eq:21} over all $j \geq 0$, and bounding
  $n^{-1/2} c_0$ using another application of Hoeffding's inequality, we arrive at
  \begin{equation*}
    \mathbb{P}[ \sup_{f \in \mathcal{F}_{\Phi}} \Bigl| \tilde{\zeta}(f) \Bigr| \geq \sum_{j=0}^{\infty} K' \eta_j \Bigr] \leq 2d \sum_{j=0}^{\infty} \exp(-t_{j}^2) 
  \end{equation*}
  for some constant $K' > 0$.  We now bound 
  $\sum_{j=0}^{\infty} \eta_j = \sum_{j=0}^{\infty} d\sqrt{K \delta_{j}^{2} \lambda_{d}^{-1}
    (t_j^{2} + \log |S_{j+1}|^{2})}$. To bound $|S_j|$, we
  use the covering number for $\Omega$, i.e.,
  $|S_{j}| \leq (L/\delta_j)^{d}$ \citep[Lemma~2.5]{geer00:_empir_m} for some constant $L$
  independent of $\delta_j$. Then by taking $t_{j}^{2} = 2 (\log{j} + \log{n})$,
  \begin{equation}
    \label{eq:22}
    \mathbb{P}\Bigl[ \,\, \sup_{f \in \mathcal{F}_{\Phi}} \,\, \Bigl|
    \tilde{\zeta}(f) \Bigr| \geq  d \lambda_{d}^{-1/2} (C_1
    \log{n} + C_2) \Bigr] \leq \frac{2dC_3}{n^2}
  \end{equation}
  for some constants $C_1$, $C_2$ and $C_3$. Eq.~\eqref{eq:22} and
  Eq.~\eqref{eq:sup_f_decomposition1} implies
  \begin{equation}
    \label{eq:supBnd}
    \sup_{f \in \mathcal{F}_{\Phi}} |\zeta(f)| \leq  \frac{C(F) 
      \log{n}}{\sqrt{n}} +  d \lambda_{d}^{-1/2} (C_1
    \log{n} + C_2)
  \end{equation}
  with probability at least $1 - (1 + 2dC_3)n^{-2}$. Since there
  exists some constant $c>0$ for which $\lambda_{d}/(cn) \rightarrow
  1$ almost surely, an application of the Borel-Cantelli lemma to
  Eq.~\eqref{eq:supBnd} yields $\sup_{f \in \mathcal{F}_{\Phi}} |\zeta(f)|
  \rightarrow 0$ almost surely. Lemma~\ref{lem:emp_proc} is thus
  established. % To complete the proof of
  % Lemma~\ref{lem:emp_proc}, we replace $\mathbf{W}$ by
  % $\mathbf{V}^{\top}$ and note that $\|\mathbf{W} -
  % \mathbf{V}^{\top}\|_{F} \rightarrow 0$ almost surely as $n \rightarrow \infty$.
%\end{proof}
  
%\{\bf Proof of Theorem~\ref{thm:mmd_unbiased_ase}}
%\end{proof}
\subsection{Proof for the Scaling Case \S\ref{sec:scaling-case}}
\label{sec:scaling_proof}
 The proof parallels that of
 Theorem~\ref{thm:mmd_unbiased_ase}. We sketch here the requisite
 modifications for the case when the null hypothesis $F \upVdash G \circ c$ holds.
 Namely, we show that when $F \upVdash G \circ c$ for some constant $c > 0$,
 \begin{equation}
   \label{eq:6}
   (m+n)(V_{n,m}(\hat{\mathbf{X}}/\hat{s}_X, \hat{\mathbf{Y}}/\hat{s}_Y) -
   V_{n,m}(\mathbf{X}/s_X, \mathbf{Y} \mathbf{W}_{n,m}/s_{Y})
   \overset{\mathrm{a.s}}{\longrightarrow} 0. 
 \end{equation}
Define $\xi_{W}, \hat{\xi} \in \mathcal{H}$ by 
 \begin{gather*}
   \xi_{W} = \frac{\sqrt{m+n}}{n} \sum_{i=1}^{n} \kappa(\mathbf{W}_1 X_i/s_X, \cdot) -
   \frac{\sqrt{m+n}}{m} \sum_{k=1}^{m} \kappa(\mathbf{W}_{2} Y_k/s_{Y}, \cdot), \\
   \hat{\xi} = \frac{\sqrt{m+n}}{n} \sum_{i=1}^{n} \kappa(\hat{X}_i/\hat{s}_X, \cdot) -
   \frac{\sqrt{m+n}}{m} \sum_{k=1}^{m} \kappa(\hat{Y}_k/\hat{s}_{Y}, \cdot).
 \end{gather*}
 Define $r_1$ and $r_2$ similar to that in the proof of
 Theorem~2, i.e., 
 \begin{gather*}
   r_1 = \frac{m+n}{n(n-1)} \sum_{i=1}^{n} \Bigl\{
   \kappa\Bigl(\tfrac{\hat{X}_i}{\hat{s}_X},
   \tfrac{\hat{X}_i}{\hat{s}_X}\Bigr) -
   \kappa\Bigl(\tfrac{X_i}{s_X}, \tfrac{X_i}{s_X}\Bigr)\Bigr\}
       + \frac{m+n}{m(m-1)} \sum_{k=1}^{m} \Bigl\{
   \kappa\Bigl(\tfrac{\hat{Y}_k}{\hat{s}_Y},
   \tfrac{\hat{Y}_k}{\hat{s}_Y}\Bigr) -
   \kappa\Bigl(\tfrac{Y_k}{s_Y}, \tfrac{Y_k}{s_Y}\Bigr)\Bigr\}, \\
   r_2 = \frac{m+n}{n^{2}(n-1)} \sum_{i=1}^{n} \sum_{j=1}^{n}
   \Bigl\{
   \kappa\Bigl(\tfrac{\hat{X}_i}{\hat{s}_X},
   \tfrac{\hat{X}_j}{\hat{s}_X}\Bigr) -
   \kappa\Bigl(\tfrac{X_i}{s_X}, \tfrac{X_j}{s_X}\Bigr)\Bigr\}
    + \frac{m+n}{m^2(m-1)}
   \sum_{k=1}^{m} \sum_{l=1}^{m} \Bigl\{
   \kappa\Bigl(\tfrac{\hat{Y}_k}{\hat{s}_Y},
   \tfrac{\hat{Y}_l}{\hat{s}_Y}\Bigr) -
   \kappa\Bigl(\tfrac{Y_k}{s_Y}, \tfrac{Y_l}{s_Y}\Bigr)\Bigr\}. 
 \end{gather*}
 % We again have the decomposition
 % \begin{equation*}
 %   |\delta| = |(\|\xi\|_{\mathcal{H}}^{2} -
 %   \|\hat{\xi}_{W}\|_{\mathcal{H}}^{2}) + r_1 + r_2| \leq \|\hat{\xi} -
 %   \xi_{W}\|_{\mathcal{H}} (2 \|\xi\| + \|\xi - \hat{\xi}_{W}\|_{\mathcal{H}})
 %   + |r_1 + r_2|
 % \end{equation*}
 There exists an $L$ depending only on $\kappa$ such
 that $|r_1|$ and $|r_2|$ is bounded from above by
 \begin{equation*}
   L(m+n)\Bigl\{ \frac{\|\mathbf{X} - \hat{\mathbf{X}} \mathbf{W}\|_{2
     \rightarrow \infty}}{(n-1) \hat{s}_{X}} + \frac{|s_{X} -
     \hat{s}_{X}|}{(n-1) s_{X} \hat{s}_{X}} + \frac{\|\mathbf{Y} -
     \hat{\mathbf{Y}} \mathbf{W}\|_{2
     \rightarrow \infty}}{(m-1) \hat{s}_{Y}} + \frac{|s_{Y} -
     \hat{s}_{Y}|}{(m-1) s_{Y} \hat{s}_{Y}} \Bigr\}.
 \end{equation*}
 Lemma~\ref{lem:2} implies $|r_1 + r_2| \rightarrow 0$ almost
 surely. Now denote $\sigma_X = (\mathbb{E}[\|X\|^{2}])^{1/2}$ and $\sigma_Y =
  (\mathbb{E}[\|Y\|^{2}])^{1/2}$. Then $s_X$ and $s_Y$ are
  $\sqrt{n}$-consistent and $\sqrt{m}$-consistent estimators of
  $\sigma_X$ and $\sigma_Y$, respectively. When $F \upVdash G \circ c$,
 $\mu[F \circ \mathbf{T}_1 \circ \sigma_{X}^{-1}] = \mu[G \circ \mathbf{T}_2
 \circ \sigma_{Y}^{-1}]$. Denote by $\xi_{W}^{(X)}$ and
 $\xi_{W}^{(Y)}$ the quantities
 \begin{gather*}
   \xi_{W}^{(X)} = \sqrt{m+n}\Bigl( \sum_{i=1}^{n}
    \frac{\kappa(\mathbf{T}_1 X_i/\sigma_{X}, \cdot)
    - \mu[F \circ \mathbf{T}_1 \circ \sigma_{X}^{-1}]}{n}\Bigr), \\
  \xi_{W}^{(Y)} = \sqrt{m+n} \Bigl(\sum_{k=1}^{m} \frac{\kappa(\mathbf{T}_2 Y_k / \sigma_{Y}, \cdot) - \mu[G
    \circ \mathbf{T}_2 \circ \sigma_{Y}^{-1}]}{m}\Bigr). 
 \end{gather*}
 Then $\xi_{W} = \xi_W^{(X)} + \xi_{W}^{(Y)} + O(1)$ and hence
 $\xi_{W} - O(1)$ is once again a sum of independent mean zero
  random elements of $\mathcal{H}$. A Hilbert
  space concentration inequality similar to that of
  \citep[Theorem~3.5]{pinelis94:_optim_banac} yields that
  $\|\xi\|_{\mathcal{H}}$ is bounded in probability.

  We next bound $\|\xi_{W} - \hat{\xi}\|_{\mathcal{H}}$. We mimic the
  proof of Lemma~\ref{lem:emp_proc}, paying attention to the terms
  $\hat{s}_X$ and $s_X$. A Taylor expansion of $\kappa$ yields 
 \begin{equation*}
   \begin{split}
   \frac{1}{\sqrt{n}} \sum_{i=1}^{n} (\Phi(\tfrac{X_i}{s_{X}}) -
   \Phi(\tfrac{\mathbf{W}\hat{X}_i}{\hat{s}_X}))( \cdot) &= 
   \frac{1}{\sqrt{n}} \sum_{i=1}^{n} \partial \Phi\Bigl(\tfrac{X_i}{s_X}\Bigr)(\cdot)^{\top}
   \Bigl(\tfrac{\mathbf{W}\hat{X}_i}{\hat{s}_X} -
   \tfrac{X_i}{s_X}\Bigr) \\ & + \frac{1}{2 \sqrt{n}} \sum_{i=1}^{n}  
   \Bigl(\tfrac{\mathbf{W}\hat{X}_i}{\hat{s}_X} -
   \tfrac{X_i}{s_X}\Bigr)^{\top} \partial^{2}
   \Phi\Bigl(\tfrac{X_i^{*}}{s_X}\Bigr)(\cdot) 
   \Bigl(\tfrac{\mathbf{W}\hat{X}_i}{\hat{s}_X} -
   \tfrac{X_i}{s_X}\Bigr).
   \end{split}
\end{equation*}
The terms depending on $\partial^{2} \Phi$ in the above display is bounded as
\begin{equation*}
  \begin{split}
  \Bigl|\frac{1}{2 \sqrt{n}} \sum_{i=1}^{n} \Bigl(\tfrac{\mathbf{W}\hat{X}_i}{\hat{s}_X} -
   \tfrac{X_i}{s_X}\Bigr)^{\top} \partial^{2} \Phi
   \Bigl(\tfrac{X_i^{*}}{s_X}\Bigr)(\cdot) \Bigl(\tfrac{\mathbf{W}\hat{X}_i}{\hat{s}_X} -
   \tfrac{X_i}{s_X}\Bigr)\Bigr|  &\leq \frac{\sup_{Z \in \Omega} \|\partial^{2}
   \Phi(Z) \|}{2 \sqrt{n}} \sum_{i=1}^{n} \Bigl \|\tfrac{\mathbf{W}\hat{X}_i}{\hat{s}_X} -
   \tfrac{X_i}{s_X} \Bigr\|^{2} \\
%    &\leq \frac{\sup_{Z \in \Omega} \|\partial^{2}
%    \Phi(Z) \|}{\sqrt{n}} \sum_{i=1}^{n} \Bigl( \frac{\|\hat{X}_i - X_i\|^{2}}{(\hat{s}_X)^{2}}
%    + \|X_i\|^{2} \frac{|\hat{s}_X - s_{X}|^{2}}{(s_{X}
%      \hat{s}_{X})^{2}} \Bigr) \\
%    & \leq \frac{\sup_{Z \in \Omega} \|\partial^{2}
%    \Phi(Z) \|}{\sqrt{n}} \Bigl(\frac{\|\hat{\mathbf{X}} -
%  \mathbf{X}\|_{F}^{2}}{(\hat{s}_X)^{2}} + \|\mathbf{X}\|_{F}^{2}
% \frac{|\hat{s}_X - s_X|^{2}}{(s_{X}
%      \hat{s}_{X})^{2}} \Bigr)  \\
% &\leq \frac{\sup_{Z \in \Omega} \|\partial^{2}
%    \Phi(Z) \|}{\sqrt{n}} \Bigl(\frac{\|\hat{\mathbf{X}} -
%  \mathbf{X}\|_{F}^{2}}{(\hat{s}_X)^{2}} + \|\mathbf{X}\|_{F}^{2}
% \frac{n^{-1}\|\hat{\mathbf{X}} - \mathbf{X}\|_{F}^{2}}{(
%      \hat{s}_{X})^{2} n^{-1} \|\mathbf{X}\|_{F}^{2}} \Bigr) \\
& \leq \frac{\sup_{Z \in \Omega} \|\partial^{2}
   \Phi(Z) \| \|\hat{\mathbf{X}} \mathbf{W} -
 \mathbf{X}\|_{F}^{2}}{\sqrt{n} (\hat{s}_X)^{2}} 
  \end{split}
\end{equation*}
which converges to $0$ almost
surely. For the terms depending on $\partial \Phi$, we have
   \begin{equation*}
     \begin{split}
    \frac{1}{\sqrt{n}} \sum_{i=1}^{n} \partial \Phi\Bigl(\tfrac{X_i}{s_X}\Bigr)(\cdot)^{\top}
   \Bigl(\tfrac{\mathbf{W} \hat{X}_i}{\hat{s}_X} - \tfrac{X_i}{s_X}\Bigr) 
 & = \frac{1}{\sqrt{n}} \sum_{i=1}^{n} \partial
   \Phi\Bigl(\tfrac{X_i}{s_X}\Bigr)(\cdot)^{\top}
   \tfrac{\mathbf{W} \hat{X}_i - X_i}{\hat{s}_X} \\ &+ \frac{1}{\sqrt{n}}
   \sum_{i=1}^{n} \partial
   \Phi\Bigl(\tfrac{X_i}{s_X}\Bigr)(\cdot)^{\top} X_i \Bigl(\tfrac{\hat{s}_{X} -
     s_{X}}{\hat{s}_{X} s_{X}}\Bigr).
     \end{split}
   \end{equation*}
   The first sum on the right hand side of the above display can be
   bounded using a chaining argument identical to that in the proof of
   Lemma~\ref{lem:emp_proc} and an application of Slutsky's
   theorem (for $\hat{s}_{X} \rightarrow
   (\mathbb{E}[\|X\|^{2}])^{1/2}$ almost surely). For the second sum
   on the right hand side, we have
   \begin{equation*}
     \begin{split}
       \Bigl|\frac{1}{\sqrt{n}} \sum_{i=1}^{n} \partial
       \Phi\Bigl(\tfrac{X_i}{s_X}\Bigr)(\cdot)^{\top} X_i \Bigl(\frac{\hat{s}_{X}
         - s_{X}}{\hat{s}_{X} s_{X}}\Bigr)\Bigr| &= \Bigl|\frac{1}{\sqrt{n}}
       \sum_{i=1}^{n} \Phi\Bigl(\tfrac{X_i}{s_X}\Bigr)(\cdot)^{\top} X_i
       \Bigl(\frac{\hat{s}_{X}^{2} - s_{X}^{2}}{(\hat{s}_{X} +
         s_{X})\hat{s}_{X} s_{X}}\Bigr)\Bigr| \\
       & \leq \frac{\sup_{Z,Z' \in \Omega} |(\partial \Phi(Z))(Z')^{\top} Z|}
       {\sqrt{n}} \frac{|\|\hat{\mathbf{X}}\|_{F}^{2} -
         \|\mathbf{X}\|_{F}^{2}|}{(\hat{s}_{X} + s_{X})\hat{s}_{X}
         }.
     \end{split}
   \end{equation*}
   We note that $\|\hat{\mathbf{X}}\|_{F}^{2} =
   \|\mathbf{S}_{\mathbf{A}}^{1/2} \|_{F}^{2}$ and
   $\|\mathbf{X}\|_{F}^{2} = \|\mathbf{S}_{\mathbf{P}}^{1/2}
   \|_{F}^{2}$. Thus $|\|\hat{\mathbf{X}}\|_{F}^{2} -
   \|\mathbf{X}\|_{F}^{2}| \leq \sqrt{d} \|\mathbf{S}_{\mathbf{A}} -
   \mathbf{S}_{\mathbf{P}} \|_{F}$ by the Cauchy-Schwarz
   inequality. Lemma~3.2 in \citet{lyzinski13:_perfec} can then be
   applied to $\|\mathbf{S}_{\mathbf{A}} - \mathbf{S}_{\mathbf{P}}
   \|_{F}$ to show that $|\|\hat{\mathbf{X}}\|_{F}^{2} -
   \|\mathbf{X}\|_{F}^{2} |$ is of order $O(\log{n})$ with probability
   at least $1 - n^{-2}$; note that this bound for
   $\|\mathbf{S}_{\mathbf{A}} - \mathbf{S}_{\mathbf{P}} \|_{F}$ 
   is much stronger than
   that obtained from Weyl's inequality and a concentration bound for
   $\|\mathbf{A} - \mathbf{P}\|$ from
   \citet{oliveira2009concentration,rinaldo_2013,lu13:_spect}. Hence
   by the compactness of $\Omega$, smoothness of $\Phi$ and Slutsky's
   theorem, the second sum also converges to $0$ almost surely,
   thereby establishing Eq.~\eqref{eq:6}.

\subsection{Proof for the Projection Case \S\ref{sec:projection-case}}
\label{sec:projection_proof}
  The proof of this result is almost identical to that of
  Theorem~\ref{thm:mmd_unbiased_ase}. We note here the requisite
  modifications for the case when the null hypothesis of $F \circ \pi^{-1} \upVdash G \circ \pi^{-1}$ holds.
 Define $\xi_{W}, \hat{\xi} \in \mathcal{H}$ by 
 \begin{gather*}
  \xi_{W} = \frac{\sqrt{m+n}}{n} \sum_{i=1}^{n} \kappa(\mathbf{W}_1 \pi(X_i), \cdot) -
  \frac{\sqrt{m+n}}{m} \sum_{k=1}^{m} \kappa(\mathbf{W}_2 \pi(Y_k), \cdot), \\
  \hat{\xi} = \frac{\sqrt{m+n}}{n} \sum_{i=1}^{n} \kappa(\pi(\hat{X}_i), \cdot) -
  \frac{\sqrt{m+n}}{m} \sum_{k=1}^{m} \kappa(\pi(\hat{Y}_k), \cdot).
 \end{gather*}
 In addition, define $r_1 = r_{11} + r_{12}$ and $r_2 = r_{21} + r_{22}$ by
  \begin{gather*}
   r_{11} = \frac{m+n}{n(n-1)} \sum_{i=1}^{n} 
   \Bigl(\kappa(\pi(X_i), \pi(X_i)) - \kappa(\pi(\hat{X}_i),
   \pi(\hat{X}_i))\Bigr), \\ r_{12} = 
        \frac{m+n}{m(m-1)} \sum_{k=1}^{m} 
   \Bigl(\kappa(\pi(Y_k), \pi(Y_k)) - \kappa(\pi(\hat{Y}_k),\pi(\hat{Y}_k))\Bigr),  \\   
r_{21} = \frac{m+n}{n^{2}(n-1)} \sum_{i,j}
   \Bigl(\kappa(\pi(X_i), \pi(X_j)) -
   \kappa(\pi(\hat{X}_i),\pi(\hat{X}_j))\Bigr), \\ r_{22} = 
   \frac{m+n}{m^2(m-1)}
   \sum_{k,l}
   \Bigl(\kappa(\pi(Y_k), \pi(Y_l)) - \kappa(\pi(\hat{Y}_k), \pi(\hat{Y}_l))\Bigr).
 \end{gather*}
 % We then have the decomposition
 % \begin{equation*}
 %   |\delta| = |(\|\xi\|_{\mathcal{H}}^{2} -
 %   \|\hat{\xi}\|_{\mathcal{H}}^{2}) + r_1 + r_2| \leq \|\hat{\xi} -
 %   \xi\|_{\mathcal{H}} (2 \|\xi\| + \|\xi - \hat{\xi}\|_{\mathcal{H}})
 %   + |r_1 + r_2|.
 % \end{equation*}
 Using the assumption that $\|Z\| \geq c_0$ $F$-almost everywhere for
 some constant $c_0 > 0$, both $|r_1|$ and $|r_2|$ can be
 bounded from above by
 \begin{equation*}
   L(m+n)\biggl\{ \frac{2\|\mathbf{X} - \hat{\mathbf{X}} \mathbf{W}\|_{2
     \rightarrow \infty}}{(n-1)c_0} + \frac{2 \|\mathbf{Y} -
   \hat{\mathbf{Y}} \mathbf{W}\|_{2
     \rightarrow \infty}}{(n-1) c_0}\biggr\}
 \end{equation*}
 for some constant $L$ depending only on $\kappa$. 
 % Under the null hypothesis, $\|\xi\|_{\mathcal{H}}$ is
 % bounded in probability. 
 To complete the proof, we adapt the argument in the proof of Lemma~\ref{lem:emp_proc}
 to the family of functions
 \begin{equation*}
   \mathcal{F} = \{f = (\partial ( \Phi \circ \pi)(\cdot))(Z) \colon Z
   \in \Omega \}
 \end{equation*}
 to show that $\| \xi_{W} - \hat{\xi} \|_{\mathcal{H}} \rightarrow 0$
 almost surely as $n \rightarrow \infty$.
\end{document}